\theoremstyle{definition}
\newtheorem{definition}{Definition}[section]
\newtheorem{notation}[definition]{Notation}
\newtheorem{example}[definition]{Example}
\newtheorem{openproblem}[definition]{Open problem}
\newtheorem{openquestion}[definition]{Open question}
\newtheorem{remark}[definition]{Remark}
\newtheorem{note}[definition]{Note}
\theoremstyle{plain}
\newtheorem{theorem}[definition]{Theorem}
\newtheorem{lemma}[definition]{Lemma}
\newtheorem{proposition}[definition]{Proposition}
\newtheorem{corollary}[definition]{Corollary}
\newcommand{\beq}{\begin{equation}}
\newcommand{\eeq}{\end{equation}}
\newcommand{\bdfn}{\begin{definition}}
\newcommand{\edfn}{\end{definition}}
\newcommand{\bthm}{\begin{theorem}}
\newcommand{\ethm}{\end{theorem}}
\newcommand{\bprop}{\begin{proposition}}
\newcommand{\eprop}{\end{proposition}}
\newcommand{\bcor}{\begin{corollary}}
\newcommand{\ecor}{\end{corollary}}
\newcommand{\blem}{\begin{lemma}}
\newcommand{\elem}{\end{lemma}}
\newcommand{\bex}{\begin{example}}
\newcommand{\eex}{\end{example}}
\newcommand{\bxc}{\begin{exercise}}
\newcommand{\exc}{\end{exercise}}
\newcommand{\bntn}{\begin{notation}}
\newcommand{\entn}{\end{notation}}
\newcommand{\be}{\begin{enumerate}}
\newcommand{\ee}{\end{enumerate}}
\newcommand{\bce}{\begin{center}}
\newcommand{\ece}{\end{center}}
\newcommand{\bi}{\begin{itemize}}
\newcommand{\ei}{\end{itemize}}
\newcommand{\bt}{\begin{tabular}}
\newcommand{\et}{\end{tabular}}
\newcommand{\ba}{\begin{array}} 
\newcommand{\ea}{\end{array}}
\numberwithin{equation}{section}
\def\N{{\mathbb N}}
\newcommand {\bua} {\begin{eqnarray*}}
\newcommand {\eua} {\end {eqnarray*}}
\begin{document}
\title{Boolean Lifting Property for Residuated Lattices}
\author{George GEORGESCU and Claudia MURE\c{S}AN\\ \footnotesize University of Bucharest\\ \footnotesize Faculty of Mathematics and Computer Science\\ \footnotesize Academiei 14, RO 010014, Bucharest, Romania\\ \footnotesize Emails: georgescu.capreni@yahoo.com; c.muresan@yahoo.com, cmuresan@fmi.unibuc.ro}
\date{\today }
\maketitle

\begin{abstract} In this paper we define the Boolean Lifting Property (BLP) for residuated lattices to be the property that all Boolean elements can be lifted modulo every filter, and study residuated lattices with BLP. Boolean algebras, chains, local and hyperarchimedean residuated lattices have BLP. BLP behaves interestingly in direct products and involutive residuated lattices, and it is closely related to arithmetic properties involving Boolean elements, nilpotent elements and elements of the radical. When BLP is present, strong representation theorems for semilocal and maximal residuated lattices hold.\\ {\em 2010 Mathematics Subject Classification:} Primary: 06F35; secondary: 03G25.\\ {\em Key words and phrases:} lifting property; (maximal, local, semilocal, quasi--local, hyperarchimedean, simple) residuated lattice; Boolean center; nilpotent element; (prime, maximal) filter; radical.\end{abstract}

\section{Introduction}

The notion of residuated lattice originates both in algebra and in logic. It captures the structure of the set of ideals of a ring (\cite{wadi}), but also algebraic properties that come from the syntax of certain non--classical propositional logics: intuitionistic logic (\cite{hey}), \L ukasiewicz logic, BL created by Hajek (\cite{haj}), substructural logics (\cite{gal}) and other logical systems. These are the main sources that inspire the research on residuated lattices.

In ring theory, lifting idempotent property (LIP: idempotents can be lifted modulo every left (respectively right) ideal) is intensely studied (\cite{nic}, \cite{andcam}, \cite{camyu}, \cite{lam}). For instance, in \cite{nic} it is proven that a ring has LIP iff it is an exchange ring, that any clean ring has LIP, and, in the particular case of rings with central idempotents, the two notions coincide.

A lifting property for Boolean elements appears in the study of maximal MV--algebras (\cite{figele}) and maximal residuated lattices (\cite{eu3}; see also \cite{leo}). The lifting property for Boolean elements modulo the radical plays an essential part in the structure theorem for maximal residuated lattices (\cite[Theorems $6.5$ and $6.6$]{eu3}).

The purpose of the present paper is to study residuated lattices (more precisely commutative integral bounded residuated lattices) which satisfy the lifting property of Boolean elements modulo every filter, a property that we have called, for brevity, BLP. In residuated lattices, Boolean elements play a role which is similar to that of idempotent elements in rings. Certain results in this article which refer to residuated lattices with BLP are formulated analogously to properties of rings with LIP, while other results in our paper differ essentially from those found in the case of rings in both formulation and proof.

The class of residuated lattices with BLP has many important subclasses, such as the ones of Boolean algebras, chains, local and hyperarchimedean residuated lattices. It turns out that residuated lattices with BLP are exactly the quasi--local residuated lattices, which have been introduced in \cite{eu4} as a generalization for quasi--local MV--algebras (\cite{figg}) and quasi--local BL--algebras (\cite{adinggll}). Residuated lattices with BLP also coincide with those residuated lattices whose lattice of filters is dually B--normal.

The study of residuated lattices also leads to new properties, with no correspondent for rings with LIP; for instance, see, in Section \ref{clsBLP}, the properties which we have denoted by $(\star )$ and $(\star \star )$, the first of which is sufficient and the second of which is necessary for BLP to hold. These properties involve elements of the Boolean center, the radical and the set of nilpotents, and open a wide range of ways to approach the study of residuated lattices with BLP.

Finally, remarkable representation theorems hold for semilocal and for maximal residuated lattices with BLP, which also lead to important representation theorems in the particular case of MV--algebras and that of BL--algebras.

Section \ref{preliminaries} of our article consists of previously known concepts and results about residuated lattices which are necessary in the following sections. In Section \ref{BLP}, we define the BLP for residuated lattices and provide several immediate results related to this property, as well as some examples. Section \ref{charBLP} contains an arithmetic characterization of the BLP, which produces many results and further examples concerning the BLP. In Section \ref{prods}, we analyse the BLP in direct products of residuated lattices. In Section \ref{clsBLP} we set the BLP in relation to two new arithmetic conditions, we establish relationships between the classes of the local, semilocal, maximal, quasi--local, semiperfect residuated lattices, residuated lattices with BLP, we obtain representation theorems for semilocal and maximal residuated lattices with BLP, and provide a method for obtaining examples of residuated lattices with BLP and residuated lattices without BLP.

\section{Preliminaries}
\label{preliminaries}

In this section we shall present a series of known notions and results related to residuated lattices, all of which we will be using in the sequel.

We make the usual convention: throughout this paper, every algebraic structure will be designated by its support set, whenever it is clear which algebraic structure on that set we are referring to. We shall denote by $\N $ the set of the natural numbers and by $\N ^*$ the set of the nonzero natural numbers. The cardinality of a set $M$ will be denoted by $|M|$.

\begin{definition} A {\em commutative integral bounded residuated lattice} (in brief, a {\em residuated lattice}) is an algebraic structure $(A,\vee ,\wedge ,\odot ,\rightarrow ,0,1)$, where $\vee ,\wedge ,\odot ,\rightarrow $ are binary operations on $A$ and $0,1\in A$, such that $(A,\vee ,\wedge ,0,1)$ is a bounded lattice, whose partial order will be denoted $\leq $, $(A,\odot ,1)$ is a commutative monoid and the following property, called {\em the law of residuation}, is satisfied: for all $a,b,c\in A$, $a\leq b\rightarrow c$ iff $a\odot b\leq c$. The operations $\odot $ and $\rightarrow $ are called {\em multiplication} and {\em implication} (or {\em residuum}), respectively.\end{definition}

Since any residuated lattice contains the constants $0$ and $1$, it follows that each residuated lattice is non--empty. The residuated lattice with only one element (that is with $0=1$) is called {\em the trivial residuated lattice.} Residuated lattices with at least two elements (that is with $0\neq 1$) are said to be {\em non--trivial.} 

Morphisms of residuated lattices will be called, in brief, {\em residuated lattice morphisms}.

For any residuated lattice $A$ and any $a,b\in A$, we denote $a\leftrightarrow b=(a\rightarrow b)\wedge (b\rightarrow a)$ (the {\em equivalence}, or {\em biresiduum}) and $\neg \, a=a\rightarrow 0$ (the {\em negation}).

Let $A$ be a residuated lattice, $a\in A$ and $n\in \N ^{*}$, arbitrary. We shall denote by $a^{n}$ \index{$a^{n}$} the following element of $A$: $\underbrace{\textstyle a\odot \ldots \odot a}_{\textstyle n\ {\rm of}\ a}$. We also denote $a^{0}=1$.

In order to avoid excessive use of parantheses, we shall apply the usual convention: out of the previously described operations, the highest priority goes to constants and variables, followed by exponentiation, then negation, and the lowest priority goes to binary operations, including the one involving elements and filters in the notation of classes of elements modulo filters that we shall see below.

Notice that the law of residuation implies that, in any residuated lattice $A$, the implication satisfies this property: for every $x,y\in A$, $x\rightarrow y=\max \{t\in A\ |\ x\odot t\leq y\}$. Moreover, if a set $A$ is endowed with a bounded lattice structure $(A,\vee ,\wedge ,0,1)$ and an ordered commutative monoid structure $(A,\odot ,1)$ with respect to the order $\leq $ of the bounded lattice $(A,\vee ,\wedge ,0,1)$ (that is a commutative monoid structure $(A,\odot ,1)$ with the operation $\odot $ increasing in both arguments with respect to $\leq $), such that, for every $x,y\in A$, there exists $\max \{t\in A\ |\ x\odot t\leq y\}$ in $A$, then $(A,\vee ,\wedge ,\odot ,\rightarrow ,0,1)$, where the implication is defined by: $x\rightarrow y=\max \{t\in A\ |\ x\odot t\leq y\}$, for every $x,y\in A$, is a residuated lattice. We shall call this definition for the implication {\em the canonical definition of $\rightarrow $ from $\odot $}. Also, notice that, if $(A,\vee ,\wedge ,0,1)$ is a bounded lattice, then $(A,\wedge ,1)$ is an ordered commutative monoid with respect to the order relation of the bounded lattice $(A,\vee ,\wedge ,0,1)$.

It is straightforward that, given a Boolean algebra $(A,\vee ,\wedge ,\bar{\ },0,1)$, if we define on A $\odot =\wedge $ and $\rightarrow $ equal to the Boolean implication ($a\rightarrow b=\bar{a}\vee b$, for all $a,b\in A$), then the structure $(A,\vee ,\wedge ,\odot ,\rightarrow ,0,1)$ is a residuated lattice, in which $\neg \, =\bar{\ }$.

\begin{example} Let $A$ be an arbitrary bounded chain, organized as a lattice in the usual way: $\vee =\max $ and $\wedge =\min $, let $\odot =\wedge =\min $ and let us define the implication on $A$ in the following way: for every $a,b\in A$, $a\rightarrow b=\begin{cases}1, & \mbox{if }a\leq b,\\ b, & \mbox{if }a>b.\end{cases}$ Then, for every $a,b\in A$, $a\rightarrow b=\max \{x\in A\ |\ a\odot x\leq b\}$. Therefore $(A,\vee =\max ,\wedge =\min ,\odot =\wedge =\min ,\rightarrow ,0,1)$ is a residuated lattice.\label{exutil}\end{example}

The underlying lattice of $A$ is not always distributive. However, $(A,\vee ,\odot ,0,1)$ is always a bounded distributive lattice. Whenever the lattice reduct $(A,\vee ,\wedge )$ of $A$ is distributive, $A$ will be called a {\em distributive residuated lattice.} Obviously, if a residuated lattice $A$ is not distributive, then $\odot \neq \wedge $ in $A$.

Later in this paper we shall see examples of residuated lattices, and even finite distributive residuated lattices, which are neither chains, nor Boolean algebras.

Throughout the rest of this section, unless mentioned otherwise, $A$ will be an arbitrary residuated lattice.

\begin{lemma}{\rm \cite{bal}, \cite{gal}, \cite{haj},  \cite{ior}, \cite{kow}, \cite{pic}, \cite{tur}} For any $a,b,x,y\in A$, we have:
\begin{enumerate}
\item\label{aritmlr1} $a\odot b\leq a\wedge b$, that is $a\odot b\leq a$ and $a\odot b\leq b$, hence $0\odot 0=0$; moreover, if $a\leq b$ and $x\leq y$, then $a\odot x\leq b\odot y$;
\item\label{aritmlr11} if $a\vee b=1$, then $a\odot b=a\wedge b$;
\item\label{aritmlr0} $a\odot b\leq a\rightarrow b$; 
\item\label{aritmlr3} $\neg \, 1=0$ and $\neg \, 0=1$; moreover, $\neg \, a=1$ iff $a=0$;

\item\label{aritmlr7} $a\odot \neg \, a=0$; $a\odot b=0$ iff $a\leq \neg \, b$ iff $b\leq \neg \, a$;
\item\label{aritmlr10} $a\leq \neg \, \neg \, a$ and $\neg \, \neg \, \neg \, a=\neg \, a$;
\item\label{aritmlr8} $x\leq a\rightarrow x$;
\item\label{aritmlr4} $x\odot (x\rightarrow a)\leq a$;
\item\label{aritmlr2} $a\leq b$ iff $a\rightarrow b=1$; $a=b$ iff $a\leftrightarrow b=1$;
\item\label{aritmlr5} $a\rightarrow b\leq \neg \, b\rightarrow \neg \, a$;
\item\label{aritmlr6} if $a\leq b$, then: $\neg \, b\leq \neg \, a$, $b\rightarrow x\leq a\rightarrow x$ and $x\rightarrow a\leq x\rightarrow b$;
\item\label{aritmlr12} $\neg \, (a\odot b)=a\rightarrow \neg \, b=b\rightarrow \neg \, a=\neg \, \neg \, a\rightarrow \neg \, b=\neg \, \neg \, b\rightarrow \neg \, a$;
\item\label{aritmlr13} $\neg \, (a\vee b)=\neg \, a\wedge \neg \, b$;
\item\label{aritmlr9} $(a\rightarrow b)\odot (x\rightarrow y)\leq (a\wedge x)\rightarrow (b\wedge y)$.
\end{enumerate}\label{aritmlr}\end{lemma}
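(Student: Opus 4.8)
The statement to prove is Lemma~\ref{aritmlr}, which collects fourteen standard arithmetic identities and inequalities valid in any residuated lattice. Each item is an elementary consequence of the law of residuation together with the monoid and lattice axioms, so the plan is to prove each item in turn, drawing freely on the items already established and using the residuation adjunction $a\odot b\leq c\Leftrightarrow a\leq b\rightarrow c$ as the central tool.

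The plan is to proceed as follows. For (\ref{aritmlr1}), I would note that $a\odot b\leq a$ follows from $b\leq 1=a\rightarrow a$ after residuation (equivalently, from $a\odot 1=a$ and monotonicity of $\odot$), and symmetrically $a\odot b\leq b$; the monotonicity statement ``$a\leq b,\ x\leq y\Rightarrow a\odot x\leq b\odot y$'' is exactly the hypothesis that $\odot$ is increasing in both arguments. For (\ref{aritmlr11}), assuming $a\vee b=1$, I would use distributivity of $\odot$ over $\vee$ (which holds in every residuated lattice since $\odot$ has a residual adjoint) to compute $a\wedge b$ in terms of $\odot$; the reverse inequality $a\odot b\leq a\wedge b$ is (\ref{aritmlr1}). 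Items (\ref{aritmlr0}), (\ref{aritmlr8}) and (\ref{aritmlr4}) are direct: $a\odot b\leq b\leq a\rightarrow b$ by (\ref{aritmlr1}) and the defining maximality of $\rightarrow$; $x\leq a\rightarrow x$ because $a\odot x\leq x$; and $x\odot(x\rightarrow a)\leq a$ is the counit of the adjunction, i.e.\ the defining property $x\rightarrow a=\max\{t\mid x\odot t\leq a\}$ applied to $t=x\rightarrow a$. The negation facts (\ref{aritmlr3}), (\ref{aritmlr7}), (\ref{aritmlr10}) then follow by specializing to $\neg\,a=a\rightarrow 0$: e.g.\ $a\odot\neg\,a\leq 0$ is (\ref{aritmlr4}) with the target $0$, and the equivalences $a\odot b=0\Leftrightarrow a\leq\neg\,b\Leftrightarrow b\leq\neg\,a$ are just residuation with $c=0$ together with commutativity of $\odot$.

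I would then handle the order-theoretic characterizations and monotonicity items. For (\ref{aritmlr2}), $a\leq b\Leftrightarrow a\rightarrow b=1$ follows because $a\rightarrow b=1$ means $1\odot a=a\leq b$, and conversely $a\leq b$ gives $a\odot 1\leq b$ so $1\leq a\rightarrow b$; the biresiduum statement $a=b\Leftrightarrow a\leftrightarrow b=1$ is immediate from the definition of $\leftrightarrow$ as a meet and from antisymmetry. Item (\ref{aritmlr6}) is pure monotonicity of $\rightarrow$: contravariant in the first argument and covariant in the second, each proved by one application of residuation. The contraposition law (\ref{aritmlr5}), $a\rightarrow b\leq\neg\,b\rightarrow\neg\,a$, I would obtain by showing $(a\rightarrow b)\odot\neg\,b\leq\neg\,a$, i.e.\ $(a\rightarrow b)\odot\neg\,b\odot a\leq 0$, which reduces via (\ref{aritmlr4}) to $\neg\,b\odot b\leq 0$, an instance of (\ref{aritmlr7}). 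Item (\ref{aritmlr10}) then combines (\ref{aritmlr7}) and (\ref{aritmlr5}).

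Finally I would dispatch the three compound identities. For (\ref{aritmlr12}), the chain $\neg\,(a\odot b)=a\rightarrow\neg\,b=b\rightarrow\neg\,a$ comes from the residuation/associativity manipulation $(a\odot b)\rightarrow 0=a\rightarrow(b\rightarrow 0)$ (the ``currying'' isomorphism $a\odot b\leq c\Leftrightarrow a\leq b\rightarrow c$ read as $(a\odot b)\rightarrow c=a\rightarrow(b\rightarrow c)$) plus commutativity; the remaining equalities with $\neg\,\neg$ follow by applying this together with $\neg\,\neg\,\neg=\neg$ from (\ref{aritmlr10}). For (\ref{aritmlr13}), $\neg\,(a\vee b)=\neg\,a\wedge\neg\,b$, I would use that $\rightarrow$ turns joins in the first argument into meets, $(a\vee b)\rightarrow 0=(a\rightarrow 0)\wedge(b\rightarrow 0)$, which is again an adjunction fact (left adjoints preserve joins, so the residual sends joins to meets). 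For the semidistributivity law (\ref{aritmlr9}), I would verify $(a\rightarrow b)\odot(x\rightarrow y)\leq(a\wedge x)\rightarrow(b\wedge y)$ by residuation, reducing it to $(a\rightarrow b)\odot(x\rightarrow y)\odot(a\wedge x)\leq b\wedge y$, and then bounding the left side below $b$ using $a\wedge x\leq a$ and (\ref{aritmlr4}), and symmetrically below $y$, so that it lies under the meet $b\wedge y$. The only mildly delicate point, and the step I expect to require the most care, is (\ref{aritmlr9}), since it needs the monotonicity of $\odot$ to be applied twice to the same product against two different upper bounds before taking the meet; everything else is a routine single application of the residuation adjunction.
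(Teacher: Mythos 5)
Your proof is sound, but there is no paper proof to compare it against: the paper states this lemma as known background, citing standard texts on residuated lattices (Balbes--Dwinger, Galatos--Jipsen--Kowalski--Ono, H\'ajek, Iorgulescu, etc.) and giving no argument of its own. What your write-up supplies, then, is a self-contained derivation from the residuation law, and all fourteen reductions you describe do go through, including the two genuinely adjunction-theoretic facts you lean on (distributivity of $\odot $ over $\vee $, the currying identity $(a\odot b)\rightarrow c=a\rightarrow (b\rightarrow c)$, and the joins-to-meets behaviour of $\rightarrow $ in its first argument), with no circularity in your order of deduction. One point needs patching, however. In item (\ref{aritmlr1}) you assert that the monotonicity of $\odot $ ``is exactly the hypothesis that $\odot $ is increasing in both arguments.'' Under the paper's definition of a residuated lattice, the axioms are only: bounded lattice, commutative monoid, and the law of residuation; monotonicity of $\odot $ is \emph{not} assumed (the paper mentions ordered monoids only in its remark on the canonical definition of $\rightarrow $ from $\odot $), so it must be derived. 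The fix is one line: if $a\leq b$, then from $b\odot x\leq b\odot x$ and residuation we get $b\leq x\rightarrow (b\odot x)$, hence $a\leq x\rightarrow (b\odot x)$, hence $a\odot x\leq b\odot x$; chaining this with the same argument in the other variable gives $a\odot x\leq b\odot y$ whenever $a\leq b$ and $x\leq y$. With that substitution your argument is complete, and since monotonicity is obtained from residuation alone, all of your later uses of it (in items (\ref{aritmlr11}), (\ref{aritmlr5}) and (\ref{aritmlr9}) in particular) remain justified.
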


We refer the reader to \cite{bal}, \cite{gal}, \cite{haj}, \cite{ior}, \cite{kow}, \cite{pic}, \cite{tur} also for the following notions and results in these preliminaries.

\begin{lemma} For all $x\in A$ and all $n\in \N ^*$, $(\neg \, x)^n\leq \neg \, x^n$.\label{uncalcul}\end{lemma}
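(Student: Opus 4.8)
The plan is to reduce the inequality to a single product being $0$, using the residuation-based characterization of $\neg $ supplied by Lemma \ref{aritmlr}(\ref{aritmlr7}). By the second equivalence there, for all $a,b\in A$ we have $a\leq \neg \, b$ iff $a\odot b=0$. Taking $a=(\neg \, x)^n$ and $b=x^n$, proving that $(\neg \, x)^n\leq \neg \, x^n$ amounts to showing that $(\neg \, x)^n\odot x^n=0$.

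Next I would exploit that $(A,\odot ,1)$ is a commutative monoid, so $\odot $ is associative and commutative and factors may be freely rearranged. The product $(\neg \, x)^n\odot x^n$ consists of $n$ copies of $\neg \, x$ and $n$ copies of $x$; regrouping each $\neg \, x$ with one $x$ gives $(\neg \, x)^n\odot x^n=(x\odot \neg \, x)^n$.

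Finally, by the first identity in Lemma \ref{aritmlr}(\ref{aritmlr7}) we have $x\odot \neg \, x=0$, so $(x\odot \neg \, x)^n=0^n$; and since $0\odot 0=0$ by Lemma \ref{aritmlr}(\ref{aritmlr1}), an immediate induction gives $0^n=0$ for every $n\in \N ^*$. Chaining the three steps yields $(\neg \, x)^n\odot x^n=0$, hence $(\neg \, x)^n\leq \neg \, x^n$, as desired.

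There is essentially no hard part here: the only point requiring care is the regrouping $(\neg \, x)^n\odot x^n=(x\odot \neg \, x)^n$, which is legitimate precisely because $(A,\odot ,1)$ is a commutative monoid. A slightly more pedestrian alternative would be induction on $n$, with base case $n=1$ reading $\neg \, x\leq \neg \, x$ and the inductive step combining the identity $(\neg \, x)^n\odot x^n=0$ with the monotonicity of $\odot $ from Lemma \ref{aritmlr}(\ref{aritmlr1}); but the direct computation above is cleaner.
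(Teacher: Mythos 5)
Your proposal is correct and follows essentially the same route as the paper's proof: both compute $(\neg \, x)^n\odot x^n=(x\odot \neg \, x)^n=0^n=0$ using commutativity of $\odot $ and Lemma \ref{aritmlr}, (\ref{aritmlr1}) and (\ref{aritmlr7}), and then conclude $(\neg \, x)^n\leq \neg \, x^n$ from the equivalence $a\odot b=0$ iff $a\leq \neg \, b$. Your write-up is merely more explicit about the regrouping step and the fact that $0^n=0$, which the paper compresses into a single chain of equalities.
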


\begin{proof} For all $x\in \N $, $0=0^n=(x\odot \neg \, x)^n=x^n\odot (\neg \, x)^n$, hence $(\neg \, x)^n\leq \neg \, x^n$, according to Lemma \ref{aritmlr}, (\ref{aritmlr1}) and (\ref{aritmlr7}).\end{proof}

A nonempty subset $F$ of $A$ is called a {\em filter of $A$} iff it satisfies the following conditions, for all $a,b\in A$:

\begin{tabular}{cl}
$(1)$ & if $a,b\in F$, then $a\odot b\in F$;\\ 
$(2)$ & if $a\in F$ and $a\leq b$, then $b\in F$.
\end{tabular}

The set of all filters of $A$ is denoted by ${\cal F}(A)$.

Clearly, if $\odot =\wedge $ in $A$, then ${\cal F}(A)$ coincides with the set of filters of the bounded lattice reduct of $A$.

Notice that any filter of $A$ contains $1$, and $\{1\}$ itself is a filter, called the {\em trivial filter} of $A$. Also, $A$ is a filter, called the {\em improper filter} of $A$. A filter $F$ of $A$ is said to be {\em non---trivial} iff $F\neq \{1\}$, and it is said to be {\em proper} iff $F\neq A$. Clearly, a filter is proper iff it does not contain the element $0$. Also, $A$ is non--trivial iff $\{1\}$ is a proper filter of $A$ iff $A$ has proper filters. Obviously, the partially ordered set $({\cal F}(A),\subseteq )$ has least element $\{1\}$ and greatest element $A$. Clearly, for every filter $F$ of $A$ and every $a,b\in A$, the following equivalences hold: $a\odot b\in F$ iff $a,b\in F$ iff $a\wedge b\in F$.

It is trivial that the intersection of any family of filters of $A$ is a filter of $A$. This and the fact that $A$ is a filter of $A$ show that, for every subset $X$ of $A$, there exists a smallest filter of $A$ which includes $X$, namely the intersection of all filters of $A$ which include $X$; this filter is denoted by $[X)$ and called {\em the filter of $A$ generated by $X$.} For every $x\in A$, $[\{x\})$ is denoted, simply, by $[x)$, and it is called {\em the principal filter of $A$ generated by $x$.} It is easy to prove that, for any $x\in A$, $[x)=\{y\in A\ |\ (\exists \, n\in \N ^{*})\, x^n\leq y\}=\{y\in A\ |\ (\exists \, n\in \N )\, x^n\leq y\}$. The set of the principal filters of $A$ is denoted by ${\cal PF}(A)$.

For all filters $F$, $G$ of $A$, we denote $[F\cup G)$ by $F\vee G$. More generally, for any family $\{F_i\ |\ i\in I\}$ of filters of $A$, we denote $[\displaystyle \bigcup _{i\in I}F_i)$ by $\displaystyle \bigvee _{i\in I}F_i$.

The general form of a principal filter and Lemma \ref{aritmlr}, (\ref{aritmlr1}), immediately show that, for any $a,b\in A$:

\begin{itemize}
\item $[a)=A=[0)$ iff there exists $n\in \N ^{*}$ such that $a^n=0$ iff there exists $n\in \N $ such that $a^n=0$, while $[a)=\{1\}=[1)$ iff $a=1$;
\item $a\leq b$ iff $[b)\subseteq [a)$;
\item $[a)\cap [b)=[a\vee b)$, $[a)\vee [b)=[a\odot b)=[a\wedge b)$ and, for any $n\in \N ^{*}$, $[a^n)=[a)$. 
\end{itemize}

$({\cal{F}}(A),\vee ,\cap ,\{1\},A)$ is a complete distributive lattice, whose order relation is $\subseteq $. The last equalities above and the trivial facts that $[1)=\{1\}$ and $[0)=A$ show that ${\cal PF}(A)$ is a bounded sublattice of the bounded lattice ${\cal{F}}(A)$, and that the bounded distributive lattice $({\cal PF}(A),\vee ,\cap ,\{1\},A)$ is isomorphic to the dual of the bounded distributive lattice $(A,\vee ,\odot ,0,1)$. Indeed, $\lambda :A\rightarrow {\cal PF}(A)$, defined by $\lambda (a)=[a)$ for all $a\in A$, is a bounded lattice isomorphism between these bounded lattices. Whenever we shall talk about either of the (bounded distributive) lattices ${\cal{F}}(A)$ and ${\cal PF}(A)$, we shall refer to these bounded lattice structures.
 
The elements $x\in A$ such that $x^n=0$ for some $n\in \N ^{*}$ are called {\em nilpotent elements.} Clearly, the element $0$ is nilpotent. We shall denote by $N(A)$ the set of nilpotent elements of $A$. By the above, for any $a\in A$ and any filter $F$ of $A$:

\begin{itemize}
\item $[a)=A=[0)$ iff $a\in N(A)$, and thus we also have:
\item if $F\cap N(A)\neq \emptyset $, then $F=A$.
\end{itemize}

The elements $x\in A$ such that $x^2=x\odot x=x$ are called {\em idempotent elements}. Clearly, if an element $x\in A$ is idempotent, then $x^n=x$ for every $x\in \N ^*$, and thus $[x)=\{y\in A\ |\ x\leq y\}$. Obviously, the only element of $A$ which is both nilpotent and idempotent is $0$. Notice that, if $A$ has $\odot =\wedge $, then all elements of $A$ are idempotent. Actually, these two conditions are equivalent: $A$ has $\odot =\wedge $ iff all elements of $A$ are idempotent (\cite{eu2}).

An element $x$ of $A$ is said to be {\em regular} iff $\neg \, \neg \, x=x$. $A$ is said to be {\em involutive} iff all its elements are regular.

The set of the complemented elements of the bounded lattice reduct of $A$ is called the {\em Boolean center of $A$} and it is denoted by ${\cal B}(A)$. Clearly, $0,1\in {\cal B}(A)$. The elements of ${\cal B}(A)$ are called {\em Boolean elements of $A$}. It is known that ${\cal B}(A)$ is a Boolean algebra with the operations induced by those of $A$, together with the complementation operation given by the negation in $A$. Also, it is straightforward that ${\cal B}(A)$ is a subalgebra of the residuated lattice $A$. Here are some more properties of the Boolean center of a residuated lattice:

\begin{lemma}{\rm \cite{bal}, \cite{gal}, \cite{haj}, \cite{ior}, \cite{kow}, \cite{pic}, \cite{tur}} For every $x\in A$ and every $e,f\in {\cal B}(A)$, we have:

\begin{enumerate}
\item\label{aritmcbool1} $e$ has a unique complement, equal to $\neg \, e$; $\neg \, \neg \, e=e$; consequently, $\neg \, e=0$ iff $e=1$;
\item\label{aritmcbool2} $e\odot f=e\wedge f$, thus $e^2=e$, hence $e^n=e$ for every $n\in \N ^*$ (all Boolean elements are idempotent; the converse is not true), and therefore $[e)=\{a\in A\ |\ e\leq a\}$;
\item\label{aritmcbool4} $x\odot e=x\wedge e$ and $\neg \, e\rightarrow x=e\vee x$;
\item\label{aritmcbool3} $x\in {\cal B}(A)$ iff $x\vee \neg \, x=1$.
\end{enumerate}\label{aritmcbool}\end{lemma}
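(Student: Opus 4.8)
The plan is to prove the four items in the order forced by their logical dependence, deriving everything from item (\ref{aritmcbool1}), since the identification of the complement of a Boolean element with its negation is what makes the other computations go through. Fix $e\in {\cal B}(A)$ and let $e'$ be a complement of $e$ in the lattice reduct, so $e\vee e'=1$ and $e\wedge e'=0$. First I would get $e\odot e'\leq e\wedge e'=0$ from Lemma \ref{aritmlr}, (\ref{aritmlr1}), whence $e\odot e'=0$ and $e'\leq \neg \, e$ by (\ref{aritmlr7}). For the reverse inclusion I would multiply by $1=e\vee e'$ and distribute $\odot $ over $\vee $: $\neg \, e=\neg \, e\odot (e\vee e')=(\neg \, e\odot e)\vee (\neg \, e\odot e')=\neg \, e\odot e'\leq e'$, using $e\odot \neg \, e=0$ again. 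Thus $e'=\neg \, e$, which proves both that the complement is unique and that it equals $\neg \, e$. Applying the same argument to the complemented element $\neg \, e$ (whose complement is $e$) and invoking uniqueness yields $\neg \, \neg \, e=e$; then $\neg \, e=0\Leftrightarrow e=1$ follows from $\neg \, \neg \, e=e$ together with Lemma \ref{aritmlr}, (\ref{aritmlr3}).

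With (\ref{aritmcbool1}) in hand, every $e\in {\cal B}(A)$ satisfies $e\vee \neg \, e=1$ and $e\odot \neg \, e=0$, and the remaining items are all instances of the same ``multiply by $1=e\vee \neg \, e$ and distribute'' maneuver. For the first equality of (\ref{aritmcbool4}), $x\odot e\leq x\wedge e$ is Lemma \ref{aritmlr}, (\ref{aritmlr1}), while for the reverse I would write $x\wedge e=(x\wedge e)\odot (e\vee \neg \, e)=((x\wedge e)\odot e)\vee ((x\wedge e)\odot \neg \, e)$; the second summand is below $e\odot \neg \, e=0$ and the first is below $x\odot e$, giving $x\wedge e\leq x\odot e$. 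Item (\ref{aritmcbool2}) is then immediate: $e\odot f=e\wedge f$ is the case $x=f$, so $e^2=e$ and hence $e^n=e$ for all $n\in \N ^*$, and the description $[e)=\{a\in A\ |\ e\leq a\}$ follows from idempotency and the general form of a principal filter.

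For the second equality of (\ref{aritmcbool4}), $\neg \, e\rightarrow x=e\vee x$, I would argue by residuation. One inclusion comes from checking $(e\vee x)\odot \neg \, e=(e\odot \neg \, e)\vee (x\odot \neg \, e)=x\odot \neg \, e\leq x$, so $e\vee x\leq \neg \, e\rightarrow x$; for the other, setting $t=\neg \, e\rightarrow x$ one has $t\odot \neg \, e\leq x$ by Lemma \ref{aritmlr}, (\ref{aritmlr4}), and then $t=t\odot (e\vee \neg \, e)=(t\odot e)\vee (t\odot \neg \, e)\leq e\vee x$. Finally, for (\ref{aritmcbool3}): the forward implication is exactly (\ref{aritmcbool1}), and conversely, if $x\vee \neg \, x=1$ then Lemma \ref{aritmlr}, (\ref{aritmlr11}), gives $x\wedge \neg \, x=x\odot \neg \, x=0$ by (\ref{aritmlr7}), so $\neg \, x$ is a complement of $x$ and $x\in {\cal B}(A)$.

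The only step that is not pure bookkeeping is (\ref{aritmcbool1}), and within it the reverse inequality $\neg \, e\leq e'$ and the derivation of $\neg \, \neg \, e=e$. Both rest on the distributivity of $\odot $ over $\vee $, which is the single ingredient not packaged in Lemma \ref{aritmlr} but available from the bounded distributive lattice $(A,\vee ,\odot ,0,1)$ recorded in the preliminaries. Once that distributive law and item (\ref{aritmcbool1}) are secured, the rest reduces to splitting elements along $1=e\vee \neg \, e$, so I expect no further obstacle.
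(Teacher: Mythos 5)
Your proof is correct, but there is nothing in the paper to compare it against: Lemma \ref{aritmcbool} is one of the preliminary facts the authors quote from the literature (\cite{bal}, \cite{gal}, \cite{haj}, \cite{ior}, \cite{kow}, \cite{pic}, \cite{tur}) without giving any proof. Judged on its own merits, your argument is sound and self-contained modulo exactly the two ingredients you identify: Lemma \ref{aritmlr} and the distributivity of $\odot $ over $\vee $, which the preliminaries do record (via the statement that $(A,\vee ,\odot ,0,1)$ is a bounded distributive lattice) and which the paper itself uses freely later, e.g.\ in the proof of Proposition \ref{propstar}. Your ordering of the items is the right one: establishing first that any lattice complement $e'$ of $e$ must satisfy $e'\leq \neg \, e$ (from $e\odot e'\leq e\wedge e'=0$) and $\neg \, e=\neg \, e\odot (e\vee e')=\neg \, e\odot e'\leq e'$ pins the complement down as $\neg \, e$ even though the lattice reduct $(A,\vee ,\wedge ,0,1)$ need not be distributive --- this is precisely the subtlety the authors allude to in Section \ref{preliminaries} when they remark that uniqueness of complements can be proved through the Boolean center of $(A,\vee ,\odot ,0,1)$. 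The remaining items then follow by your ``split along $1=e\vee \neg \, e$'' device, and each application of Lemma \ref{aritmlr}, (\ref{aritmlr1}), (\ref{aritmlr3}), (\ref{aritmlr4}), (\ref{aritmlr7}), (\ref{aritmlr11}) that you invoke is used correctly; in particular deducing (\ref{aritmcbool2}) as the instance $x=f$ of (\ref{aritmcbool4}), and closing (\ref{aritmcbool3}) with Lemma \ref{aritmlr}, (\ref{aritmlr11}), are both legitimate. I see no gap.
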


According to \cite[Proposition $2.16$]{eu3}, for any $e\in {\cal B}(A)$, $([e),\vee ,\wedge ,\odot ,\rightarrow _e,e,1)$ is a residuated lattice, where all binary operations are induced by those of $A$, except the implication, which is defined from that of $A$ by: $a\rightarrow _eb=e\vee (a\rightarrow b)$ for all $a,b\in [e)$.

\begin{remark} A rather interesting fact is that, if the underlying bounded lattice of a residuated lattice $A$ is a Boolean algebra, then ${\cal B}(A)=A$, hence, in $A$, $\odot =\wedge $, as shown by Lemma \ref{aritmcbool}, (\ref{aritmcbool2}). This means that every residuated lattice whose bounded lattice reduct is a Boolean algebra is obtained from that Boolean algebra in the way described in Section \ref{preliminaries}, in the first example of residuated lattice.\label{interestingfact}\end{remark}

Clearly, if $L$ and $M$ are residuated lattices and $\varphi :L\rightarrow M$ is a residuated lattice morphism, then $\varphi ({\cal B}(L))\subseteq {\cal B}(M)$, hence we can define a function ${\cal B}(\varphi ):{\cal B}(L)\rightarrow {\cal B}(M)$ by: ${\cal B}(\varphi )(x)=\varphi (x)$ for every $x\in {\cal B}(L)$. It is immediate that ${\cal B}(\varphi )$ is a Boolean morphism and that the mapping ${\cal B}$ defined above is a covariant functor from the category of residuated lattices to the category of Boolean algebras.

If $L$ is a bounded distributive lattice, then the set of the complemented elements of $L$ is called the {\em Boolean center of $L$} and denoted by ${\cal B}(L)$. Clearly, ${\cal B}(L)$ is a bounded sublattice of the bounded distributive lattice $L$, and, considerred with this bounded distributive lattice structure together with the complementation operation, ${\cal B}(L)$ becomes a Boolean algebra.

The Boolean center ${\cal B}(A)$ of the residuated lattice $A$ coincides with the Boolean center of the bounded distributive lattice $(A,\vee ,\odot ,0,1)$. Indeed, as shown by Lemma \ref{aritmlr}, (\ref{aritmlr11}), an element $x\in A$ has a complement in the bounded lattice $(A,\vee ,\wedge ,0,1)$ iff $x$ has a complement in the bounded lattice $(A,\vee ,\odot ,0,1)$, and the complements of $x$ in these two bounded lattices coincide. This is actually a way to prove that, although the bounded lattice $(A,\vee ,\wedge ,0,1)$ is not always distributive, every complemented element of this bounded lattice has a unique complement.

A proper filter $P$ of $A$ is called a {\em prime filter} iff, for all $a,b\in A$, if $a\vee b\in P$, then $a\in P$ or $b\in P$. The set of all prime filters of $A$ is called {\em the (prime) spectrum of $A$} and denoted by ${\rm Spec}(A)$.

The maximal elements of the set of proper filters of $A$ (with respect to $\subseteq $) are called {\em maximal filters}. The set of all maximal filters of $A$ is called {\em the maximal spectrum of $A$} and denoted by ${\rm Max}(A)$.

\begin{lemma}{\rm \cite{bal}, \cite{gal}, \cite{haj}, \cite{ior}, \cite{kow}, \cite{pic}, \cite{tur}}
\begin{enumerate}
\item\label{maxsiprime1} Every maximal filter of $A$ is a prime filter, that is: ${\rm Max}(A)\subseteq {\rm Spec}(A)$.
\item\label{maxsiprime2} Every proper filter of $A$ is included in a maximal filter of $A$, that is: for every $F\in {\cal F}(A)$, there exists $M\in {\rm Max}(A)$ such that $F\subseteq M$. Consequently, if $A$ is non--trivial, then ${\rm Max}(A)$ is non--empty.
\item\label{maxsiprime3} Every (proper) filter of $A$ is equal to the intersection of a (non--empty) family of prime filters of $A$, that is: for every (proper) filter $F$ of $A$, there exists a (non--empty) family $(Pr_i)_{i\in I}\subseteq {\rm Spec}(A)$, such that $\displaystyle F=\bigcap _{i\in I}Pr_i$.
\end{enumerate}\label{maxsiprime}\end{lemma}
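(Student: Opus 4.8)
The plan is to prove the three parts in order, leaning on two facts already established in the preliminaries: that $({\cal F}(A),\vee ,\cap ,\{1\},A)$ is a distributive lattice, and that $[a)\cap [b)=[a\vee b)$ for all $a,b\in A$. Parts (ii) and (iii) additionally use Zorn's Lemma, for which I first record that the union of a chain (indeed, of any up--directed family) of filters is again a filter: condition $(1)$ holds because any two elements of the union lie in a common member of the chain, so their product lies there too, while condition $(2)$ is immediate; moreover, if every member of the chain is proper, i.e.\ omits $0$, then so does the union.

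For part (i), let $M\in {\rm Max}(A)$ and suppose $a\vee b\in M$ while $a\notin M$ and $b\notin M$. Since $a\in M\vee [a)$, the filter $M\vee [a)$ strictly contains $M$, so maximality of $M$ forces $M\vee [a)=A$; likewise $M\vee [b)=A$. Using distributivity of ${\cal F}(A)$ together with $[a)\cap [b)=[a\vee b)$, I then compute $A=(M\vee [a))\cap (M\vee [b))=M\vee ([a)\cap [b))=M\vee [a\vee b)$. But $a\vee b\in M$ gives $[a\vee b)\subseteq M$, whence $M\vee [a\vee b)=M$, so $M=A$, contradicting that $M$ is proper. Thus $M$ is prime.

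For part (ii), given a proper filter $F$, I apply Zorn's Lemma to the proper filters containing $F$, ordered by $\subseteq $: the set is nonempty (it contains $F$) and, by the remark above, every chain has its union as an upper bound inside the set; a maximal element is then a maximal filter over $F$. Taking $F=\{1\}$, which is proper precisely when $A$ is non--trivial, yields the consequence. For part (iii), the improper filter $A$ is the intersection of the empty family, so assume $F$ is proper and aim to prove $F=\bigcap \{P\in {\rm Spec}(A)\mid F\subseteq P\}$, a nonempty family by (i)--(ii). Only the inclusion $\supseteq $ needs work: fixing $x\notin F$, Zorn's Lemma applied to the filters $G$ with $F\subseteq G$ and $x\notin G$ (closed under chain--unions, which still omit $x$) yields a maximal such filter $P$, and it remains to see $P$ is prime. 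If $a\vee b\in P$ with $a,b\notin P$, then $P\vee [a)$ and $P\vee [b)$ strictly contain $P$, so by maximality both contain $x$; exactly as in part (i), $x\in (P\vee [a))\cap (P\vee [b))=P\vee [a\vee b)=P$, a contradiction. Hence $P$ is a prime filter with $F\subseteq P$ and $x\notin P$, so $x$ is excluded from the intersection.

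The only real subtlety is the primeness argument used in (i) and (iii), which I expect to be the crux: the clean route is to avoid manipulating the generators $m\odot a^n$ of $M\vee [a)$ directly and instead invoke the distributivity of the filter lattice together with the identity $[a)\cap [b)=[a\vee b)$. The Zorn's Lemma set--ups in (ii) and (iii) are then routine, once the closure of up--directed unions of filters under the relevant properties has been checked.
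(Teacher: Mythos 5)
The paper itself offers no proof of this lemma: it is imported verbatim from the cited literature (\cite{bal}, \cite{gal}, \cite{haj}, etc.), so there is no in-paper argument to compare yours against; what matters is whether your blind proof is sound, and it is. It is also self-contained modulo facts the preliminaries do state explicitly, namely that $({\cal F}(A),\vee ,\cap ,\{1\},A)$ is a distributive lattice and that $[a)\cap [b)=[a\vee b)$. Your primality computation $A=(M\vee [a))\cap (M\vee [b))=M\vee ([a)\cap [b))=M\vee [a\vee b)=M$ is the clean lattice-theoretic route; the more common textbook argument for residuated lattices works at the element level, writing members of $M\vee [a)$ as upper bounds of elements $m\odot a^{n}$ and manipulating those, which is messier but does not presuppose distributivity of ${\cal F}(A)$ (a fact whose own proof is element-level; since the paper asserts it in the preliminaries, you are entitled to it, and there is no circularity). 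The Zorn set-ups in (ii) and (iii) are routine and correctly handle properness (unions of chains of filters omitting $0$, respectively omitting $x$, still omit it), and your reduction of (iii) to the primeness of a filter maximal among those containing $F$ and avoiding $x$ is exactly the standard prime-separation argument, with the same distributivity trick recycled. Two cosmetic points only: in (ii), Zorn's lemma formally needs an upper bound for the empty chain as well, which is supplied by $F$ itself since the poset is nonempty; and in (iii) one should remark that the maximal filter $P$ you construct is proper (it omits $x$), since the paper's definition of prime filter builds in properness --- your argument yields this for free.
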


The intersection of all maximal filters of $A$ is called {\em the radical of $A$} and it is denoted by ${\rm Rad}(A)$. Clearly, ${\rm Rad}(A)=A$ if $A$ is trivial, and ${\rm Rad}(A)$ is a proper filter of $A$, if $A$ is non--trivial. An element $a\in A$ is said to be {\em dense} iff $\neg \, a=0$. The set of the dense elements of $A$ is denoted by $D(A)$: $D(A)=\{a\in A\ |\ \neg \, a=0\}$.

\begin{lemma}{\rm \cite{bal}, \cite{gal}, \cite{haj}, \cite{ior}, \cite{kow}, \cite{pic}, \cite{tur}} ${\rm Rad}(A)=\{x\in A\ |\ (\forall \, n\in \N ^*)\, (\exists \, k_n\in \N ^*)\, (\neg \, x^n)^{k_n}=0\}$.\label{radunit}\end{lemma}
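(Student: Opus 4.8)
The plan is to prove the two inclusions between ${\rm Rad}(A)$ and the set
$R=\{x\in A\mid (\forall \, n\in \N ^*)(\exists \, k_n\in \N ^*)\, (\neg \, x^n)^{k_n}=0\}$ on the right-hand side. Throughout I will use the explicit description of a filter generated by a filter together with one extra element: since every filter is closed under $\odot $ and upward closed, for any filter $F$ and any $x\in A$ the filter $F\vee [x)=[F\cup \{x\})$ consists exactly of those $y\in A$ for which there exist $m\in F$ and $n\in \N $ with $m\odot x^n\leq y$. I will also repeatedly invoke the equivalence $m\odot x^n=0$ iff $m\leq \neg \, x^n$ and the identity $x^n\odot \neg \, x^n=0$ from Lemma \ref{aritmlr}, (\ref{aritmlr7}).

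For the inclusion $R\subseteq {\rm Rad}(A)$, I would fix $x\in R$ and an arbitrary maximal filter $M$, and argue by contradiction assuming $x\notin M$. Then $M\vee [x)$ strictly contains $M$ (because it contains $x$), so by the maximality of $M$ it must be improper, i.e.\ equal to $A$; in particular $0\in M\vee [x)$. By the description above, there are $m\in M$ and $n\in \N $ with $m\odot x^n=0$ (the case $n=0$ giving $m=0\in M$ directly, which is already impossible, so in fact $n\in \N ^*$). From Lemma \ref{aritmlr}, (\ref{aritmlr7}), this yields $m\leq \neg \, x^n$, whence $\neg \, x^n\in M$ since $M$ is upward closed. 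As $x\in R$, there is $k_n\in \N ^*$ with $(\neg \, x^n)^{k_n}=0$, and since $M$ is closed under $\odot $ we get $0=(\neg \, x^n)^{k_n}\in M$, contradicting the properness of $M$. Hence $x$ lies in every maximal filter, i.e.\ $x\in {\rm Rad}(A)$.

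For the reverse inclusion ${\rm Rad}(A)\subseteq R$, I would fix $x\in {\rm Rad}(A)$ and suppose $x\notin R$. Then there is some $n_0\in \N ^*$ with $(\neg \, x^{n_0})^k\neq 0$ for all $k\in \N ^*$, so $\neg \, x^{n_0}\notin N(A)$; consequently $[\neg \, x^{n_0})$ is a proper filter, and by Lemma \ref{maxsiprime}, (\ref{maxsiprime2}), it is contained in some maximal filter $M$, giving $\neg \, x^{n_0}\in M$. On the other hand $x\in {\rm Rad}(A)\subseteq M$, and closure of $M$ under $\odot $ gives $x^{n_0}\in M$, hence $x^{n_0}\odot \neg \, x^{n_0}\in M$. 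But Lemma \ref{aritmlr}, (\ref{aritmlr7}), forces $x^{n_0}\odot \neg \, x^{n_0}=0$, so $0\in M$, again contradicting properness. Therefore $x\in R$.

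Combining the two inclusions establishes the equality. The only ingredient that is not an immediate application of Lemma \ref{aritmlr}, (\ref{aritmlr7}), the identification of improper principal filters with nilpotent generators, and the extension of proper filters to maximal ones, is the explicit form of $M\vee [x)$ used in the first inclusion; this is the main point requiring care, though it follows readily from the general shape of a generated filter together with the fact that $M$ is already closed under $\odot $.
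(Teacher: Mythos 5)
Your proof is correct, but note that the paper itself offers no proof of this lemma: it is quoted as a known result from the references (\cite{bal}, \cite{gal}, \cite{haj}, \cite{ior}, \cite{kow}, \cite{pic}, \cite{tur}), so there is no internal argument to compare yours against. Your reconstruction is the standard one and every step checks out. For $R\subseteq {\rm Rad}(A)$, your description of $M\vee [x)$ as $\{y\in A\ |\ (\exists \, m\in M)\, (\exists \, n\in \N )\ m\odot x^n\leq y\}$ is justified exactly as you say: any finite $\odot $-product of elements of $M\cup \{x\}$ collapses to the form $m\odot x^n$ with $m\in M$ because $M$ is closed under $\odot $, contains $1$, and is upward closed, and the resulting set is itself a filter. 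Your handling of the degenerate case $n=0$ (which yields $0=m\in M$ immediately) is the kind of detail that is easy to overlook, and you got it right. For ${\rm Rad}(A)\subseteq R$, the argument via the properness of $[\neg \, x^{n_0})$ when $\neg \, x^{n_0}\notin N(A)$, extension to a maximal filter by Lemma \ref{maxsiprime}, (\ref{maxsiprime2}), and the contradiction $0=x^{n_0}\odot \neg \, x^{n_0}\in M$ from Lemma \ref{aritmlr}, (\ref{aritmlr7}), is exactly how this inclusion is proved in the cited literature. All ingredients you invoke are available in the paper's preliminaries, so the proof is self-contained relative to the paper.
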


\begin{corollary}
Any element $x\in {\rm Rad}(A)$ has $\neg \, x\in N(A)$.\label{radnegnil}\end{corollary}

\begin{proof}
Let $x\in {\rm Rad}(A)$ and let us take $n=1$ in Lemma \ref{radunit}. Then it follows that there exists $k\in \N ^*$ such that $(\neg \, x)^k=0$.\end{proof}

\begin{lemma}{\rm \cite{bal}, \cite{gal}, \cite{haj}, \cite{ior}, \cite{kow}, \cite{pic}, \cite{tur}} \begin{enumerate}
\item\label{boolsirad} ${\cal B}(A)\cap {\rm Rad}(A)=\{1\}$.
\item\label{dense} $D(A)$ is a filter of $A$ and $D(A)\subseteq {\rm Rad}(A)$.
\item\label{consecinta} ${\cal B}(A)\cap D(A)=\{1\}$.
\end{enumerate}\label{lemaradsid}\end{lemma}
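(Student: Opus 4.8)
The plan is to treat the three parts in order, relying on the arithmetic of the negation together with the description of the radical in Lemma \ref{radunit} and the idempotency of Boolean elements.

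For part (\ref{boolsirad}), I would take an arbitrary $e\in {\cal B}(A)\cap {\rm Rad}(A)$ and show $e=1$; the inclusion $\{1\}\subseteq {\cal B}(A)\cap {\rm Rad}(A)$ is clear, since $1$ is Boolean and belongs to every filter, hence to ${\rm Rad}(A)$. As $e\in {\rm Rad}(A)$, Corollary \ref{radnegnil} gives $\neg \, e\in N(A)$, so $(\neg \, e)^k=0$ for some $k\in \N ^*$. But $e\in {\cal B}(A)$ forces $\neg \, e\in {\cal B}(A)$ (it is the complement of $e$, and the Boolean center is closed under negation), and Boolean elements are idempotent by Lemma \ref{aritmcbool}, (\ref{aritmcbool2}), whence $(\neg \, e)^k=\neg \, e$. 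Therefore $\neg \, e=0$, which by Lemma \ref{aritmcbool}, (\ref{aritmcbool1}) is equivalent to $e=1$.

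For part (\ref{dense}), I would first verify the two filter axioms for $D(A)=\{a\in A\mid \neg \, a=0\}$. Closure under $\odot $ uses the identity $\neg \, (a\odot b)=a\rightarrow \neg \, b$ from Lemma \ref{aritmlr}, (\ref{aritmlr12}): if $\neg \, a=\neg \, b=0$, then $\neg \, (a\odot b)=a\rightarrow 0=\neg \, a=0$. Upward closure uses the order-reversing property of the negation from Lemma \ref{aritmlr}, (\ref{aritmlr6}): if $a\in D(A)$ and $a\leq b$, then $\neg \, b\leq \neg \, a=0$, so $b\in D(A)$. Since $\neg \, 1=0$ by Lemma \ref{aritmlr}, (\ref{aritmlr3}), the set $D(A)$ is nonempty and is thus a filter. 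For the inclusion $D(A)\subseteq {\rm Rad}(A)$, I would invoke Lemma \ref{radunit}: given $x\in D(A)$, the fact that $D(A)$ is a filter gives $x^n\in D(A)$ for every $n\in \N ^*$, i.e.\ $\neg \, x^n=0$, so taking $k_n=1$ yields $(\neg \, x^n)^{k_n}=0$ for all $n$, which is exactly the membership condition for ${\rm Rad}(A)$.

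Finally, part (\ref{consecinta}) is immediate once parts (\ref{boolsirad}) and (\ref{dense}) are available: $D(A)\subseteq {\rm Rad}(A)$ gives ${\cal B}(A)\cap D(A)\subseteq {\cal B}(A)\cap {\rm Rad}(A)=\{1\}$, and the reverse inclusion holds because $1$ is Boolean and dense. One can also argue directly, since a Boolean dense element $e$ satisfies $\neg \, e=0$, hence $e=1$ by Lemma \ref{aritmcbool}, (\ref{aritmcbool1}). None of these steps presents a genuine obstacle; the proof is essentially bookkeeping, and the only thing requiring care is selecting the correct identities from Lemma \ref{aritmlr} — in particular matching $\neg \, (a\odot b)=a\rightarrow \neg \, b$ for the closure of $D(A)$ under multiplication — and remembering that $\neg \, e$ inherits idempotency because the Boolean center is closed under negation.
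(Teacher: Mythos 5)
Your proposal is correct, but it does more than the paper does: the paper never proves parts (\ref{boolsirad}) and (\ref{dense}) at all --- they are quoted from the literature (the references attached to the lemma) --- and the only argument the paper supplies is the remark that part (\ref{consecinta}) is an immediate consequence of (\ref{boolsirad}), (\ref{dense}) and the fact that every filter contains $1$, which is exactly your deduction for that part. So on the one part the paper actually argues, your route coincides with the paper's; for the other two parts you supply self-contained proofs from material already established earlier in the paper, and these check out: part (\ref{boolsirad}) via Corollary \ref{radnegnil} together with closure of ${\cal B}(A)$ under negation and idempotency of Boolean elements (Lemma \ref{aritmcbool}, (\ref{aritmcbool2}), then $\neg\, e=0$ iff $e=1$ from (\ref{aritmcbool1})), and part (\ref{dense}) via the identity $\neg\,(a\odot b)=a\rightarrow \neg\, b$ for closure under $\odot$, antitonicity of negation for upward closure, and the characterization of ${\rm Rad}(A)$ in Lemma \ref{radunit} with the witnesses $k_n=1$ (using that $x^n\in D(A)$ once $D(A)$ is known to be a filter). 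What your version buys is a verification that the cited facts really do follow from the toolkit the paper sets up, at the cost of length; the paper's choice to cite them keeps its preliminaries short.
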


In the previous lemma, (\ref{consecinta}) is an immediate consequence of (\ref{boolsirad}), (\ref{dense}) and the fact that any filter of $A$ contains $1$.

$A$ is said to be {\em local} iff it has exactly one maximal filter, which is equivalent to the fact that ${\rm Rad}(A)$ is a maximal filter of $A$, because of the very definition of ${\rm Rad}(A)$ and the obvious fact that the set of maximal filters of $A$ is unordered with respect to set inclusion. So $A$ is local iff ${\rm Rad}(A)\in {\rm Max}(A)$ iff ${\rm Max}(A)= \{{\rm Rad}(A)\}$. 

\begin{proposition}{\rm \cite{lciu}} The following are equivalent:

\begin{enumerate}
\item\label{caractloc1} $A$ is local;
\item\label{caractloc0} $A\setminus N(A)$ is a filter of $A$;
\item $A\setminus N(A)$ is a proper filter of $A$;
\item $A\setminus N(A)$ is a maximal filter of $A$;
\item $A\setminus N(A)$ is the only maximal filter of $A$;
\item ${\rm Rad}(A)=A\setminus N(A)$;
\item\label{caractloc2} $A=N(A)\cup {\rm Rad}(A)$;
\item for all $x,y\in A$, if $x\odot y\in N(A)$, then $x\in N(A)$ or $y\in N(A)$.\end{enumerate}\label{caractloc}\end{proposition}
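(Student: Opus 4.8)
The plan is to abbreviate $S:=A\setminus N(A)$ and to record three facts that hold in every nontrivial residuated lattice before assembling the eight conditions into a single cycle, treating $({\rm viii})$ separately. First I would establish the ``free'' structural facts about $S$. (a) $S$ is upward closed: if $x\in S$ and $x\leq y$ but $y\in N(A)$, then $y^n=0$ for some $n$, and the monotonicity of $\odot$ in Lemma \ref{aritmlr}(\ref{aritmlr1}) forces $x^n\leq y^n=0$, so $x$ would be nilpotent, a contradiction. (b) Since $0\in N(A)$ we have $0\notin S$, and as a filter is proper exactly when it omits $0$, whenever $S$ turns out to be a filter it is automatically proper; this already yields $({\rm ii})\Leftrightarrow({\rm iii})$. (c) A proper filter cannot meet $N(A)$ (recall that $F\cap N(A)\neq\emptyset$ forces $F=A$), so every proper filter is contained in $S$; in particular ${\rm Rad}(A)\subseteq S$, since ${\rm Rad}(A)$ is proper when $A$ is nontrivial. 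The trivial algebra I would dispose of separately, noting that there every condition degenerates.

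Next, $({\rm ii})\Leftrightarrow({\rm viii})$: because $S$ is always upward closed by (a) and contains $1$ in the nontrivial case, ``$S$ is a filter'' reduces to closure of $S$ under $\odot$, which read contrapositively is precisely the implication in $({\rm viii})$. For the remaining conditions I would run the cycle $({\rm iii})\Rightarrow({\rm v})\Rightarrow({\rm iv})\Rightarrow({\rm i})\Rightarrow({\rm vi})\Rightarrow({\rm vii})\Rightarrow({\rm ii})\Rightarrow({\rm iii})$. The substantive step is $({\rm iii})\Rightarrow({\rm v})$: if $S$ is a proper filter it lies inside some maximal filter $M$ by Lemma \ref{maxsiprime}(\ref{maxsiprime2}), while by (c) every maximal filter is contained in $S$; squeezing these forces $S=M$ and shows $M$ is the unique maximal filter, which is $({\rm v})$. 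The steps $({\rm v})\Rightarrow({\rm iv})\Rightarrow({\rm i})$ are then formal: a unique maximal filter is in particular a maximal filter, and if $S$ is a maximal filter then any maximal filter, being proper, sits inside $S$ by (c) and hence equals it by maximality, so $A$ is local. For $({\rm i})\Rightarrow({\rm vi})$ I would use that a local $A$ has unique maximal filter ${\rm Rad}(A)$: each $x\in S$ has $[x)\neq A$ (as $[x)=A$ iff $x\in N(A)$), so $[x)$ is proper, whence $[x)\subseteq{\rm Rad}(A)$ and $x\in{\rm Rad}(A)$; combined with (c) this gives ${\rm Rad}(A)=S$. Finally $({\rm vi})\Rightarrow({\rm vii})$ is immediate, and $({\rm vii})\Rightarrow({\rm ii})$ follows because $A=N(A)\cup{\rm Rad}(A)$ forces $S\subseteq{\rm Rad}(A)$, which with (c) gives $S={\rm Rad}(A)$, a filter.

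The only genuine obstacle is the squeezing argument in $({\rm iii})\Rightarrow({\rm v})$ together with its mirror in $({\rm i})\Rightarrow({\rm vi})$; everything else is bookkeeping resting on facts (a)--(c). I expect Lemma \ref{radunit} to be unnecessary here, since the arithmetic description of ${\rm Rad}(A)$ is never invoked, only that the radical is a proper filter equal to the intersection of the maximal filters. The subtlety to keep in view is the nontriviality hypothesis: without it ${\rm Rad}(A)=A$ breaks fact (c), and indeed the trivial algebra satisfies $({\rm vii})$ and $({\rm viii})$ vacuously while failing $({\rm i})$, so the trivial case must be excluded or handled by hand.
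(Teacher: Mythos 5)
Your proof is correct. There is nothing in the paper to compare it against: Proposition \ref{caractloc} is quoted from \cite{lciu} without proof, so your argument stands as an independent reconstruction, and it holds up. Facts (a)--(c) are exactly the preliminary facts the paper records (monotonicity of $\odot $ from Lemma \ref{aritmlr}, (\ref{aritmlr1}); a filter is proper iff it omits $0$; $F\cap N(A)\neq \emptyset $ forces $F=A$; ${\rm Rad}(A)$ is proper when $A$ is non--trivial); the squeeze $S\subseteq M\subseteq S$ in (iii)$\Rightarrow $(v), its mirror $S\subseteq {\rm Rad}(A)\subseteq S$ in (i)$\Rightarrow $(vi) via $[x)=A$ iff $x\in N(A)$, and the detached equivalence (ii)$\Leftrightarrow $(viii) (upward closure is free, so being a filter reduces to $\odot $--closure, whose contrapositive is (viii)) are all sound, and the cycle together with that equivalence does link all eight conditions. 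Your remark that Lemma \ref{radunit} is never needed is also accurate: only the definition of ${\rm Rad}(A)$ as the intersection of the maximal filters and Lemma \ref{maxsiprime}, (\ref{maxsiprime2}), enter the argument. Finally, the caveat about the trivial algebra is a genuine catch, not pedantry: there $N(A)=A$, so $S=\emptyset $ is not a filter and ${\rm Max}(A)=\emptyset $, hence (vii) and (viii) hold while (i)--(vi) fail, and the proposition as printed is literally false; the implicit convention inherited from \cite{lciu} is non--triviality, and excluding that case explicitly, as you do, is the right repair.
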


\begin{lemma}{\rm \cite[p. 136]{lciu}, \cite{eu4}} If $A$ is local, then ${\cal B}(A)=\{0,1\}$ and $A=N(A)\cup \{x\in A\ |\ \neg \, x\in N(A)\}$.\label{implicdir}\end{lemma}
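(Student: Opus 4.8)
The plan is to reduce both assertions to the characterization of locality already recorded in Proposition \ref{caractloc}, specifically to the decomposition $A=N(A)\cup {\rm Rad}(A)$ from item (\ref{caractloc2}). The two auxiliary facts I expect to invoke are: first, that ${\cal B}(A)\cap N(A)=\{0\}$, which follows because every Boolean element is idempotent (Lemma \ref{aritmcbool}, (\ref{aritmcbool2})) while the only element that is simultaneously idempotent and nilpotent is $0$, as noted in the preliminaries; and second, that ${\cal B}(A)\cap {\rm Rad}(A)=\{1\}$, which is Lemma \ref{lemaradsid}, (\ref{boolsirad}).

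For the claim ${\cal B}(A)=\{0,1\}$, I would start from an arbitrary $e\in {\cal B}(A)$. Since $A$ is local, $e$ lies in $N(A)\cup {\rm Rad}(A)$. If $e\in N(A)$, then $e\in {\cal B}(A)\cap N(A)=\{0\}$, so $e=0$; if instead $e\in {\rm Rad}(A)$, then $e\in {\cal B}(A)\cap {\rm Rad}(A)=\{1\}$, so $e=1$. Because $0,1\in {\cal B}(A)$ always holds, this gives the desired equality.

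For the identity $A=N(A)\cup \{x\in A\ |\ \neg \, x\in N(A)\}$, the inclusion $\supseteq $ is immediate, both members of the union being subsets of $A$. For $\subseteq $, I would take any $x\in A$ and again use locality to place $x$ in $N(A)$ or in ${\rm Rad}(A)$; in the first case $x$ belongs to the left summand, and in the second Corollary \ref{radnegnil} yields $\neg \, x\in N(A)$, placing $x$ in the right summand.

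I do not anticipate a genuine obstacle: the whole argument is a direct unpacking of Proposition \ref{caractloc} together with the recorded interactions among ${\cal B}(A)$, $N(A)$ and ${\rm Rad}(A)$. The only point deserving a moment of explicit justification is why a Boolean nilpotent must vanish, which I would handle by citing idempotency of Boolean elements and the elementary remark that $0$ is the unique idempotent nilpotent.
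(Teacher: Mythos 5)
Your proof is correct, and it is worth noting that the paper itself gives no argument for this lemma at all: it is quoted as a known result from \cite[p.\ 136]{lciu} and \cite{eu4}, so there is no internal proof to compare against. What your argument buys is a self-contained derivation inside the paper's own toolkit: you combine the decomposition $A=N(A)\cup {\rm Rad}(A)$ from Proposition \ref{caractloc}, (\ref{caractloc2}), with the two intersection facts ${\cal B}(A)\cap N(A)=\{0\}$ (Boolean elements are idempotent by Lemma \ref{aritmcbool}, (\ref{aritmcbool2}), and $0$ is the only idempotent nilpotent, as recorded in the preliminaries) and ${\cal B}(A)\cap {\rm Rad}(A)=\{1\}$ (Lemma \ref{lemaradsid}, (\ref{boolsirad})), and then handle the second identity by sending ${\rm Rad}(A)$ into $\{x\in A\ |\ \neg\, x\in N(A)\}$ via Corollary \ref{radnegnil}. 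Every ingredient you cite appears in the paper before the lemma and none depends on it, so there is no circularity; the case split on $e\in {\cal B}(A)$ and on $x\in A$ is exhaustive precisely because of the local decomposition, and the inclusion $\supseteq$ is indeed trivial. This is essentially the standard argument one would expect the cited sources to contain, executed cleanly.
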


$A$ is said to be {\em semilocal} iff ${\rm Max}(A)$ is finite. Semilocal residuated lattices include the trivial residuated lattice, local residuated lattices, finite residuated lattices, finite direct products of local or other semilocal residuated lattices (see Section \ref{prods}) etc..

The residuated lattice $A$ is said to be {\em simple} iff it has exactly two filters, that is iff $A$ is non--trivial and ${\cal F}(A)=\{{1},A\}$. Obviously, $A$ is simple iff $\{1\}$ is a maximal filter of $A$ iff $\{1\}$ is the unique maximal filter of $A$ iff $A$ is local and ${\rm Rad}(A)=\{1\}$.

An element $a\in A$ is said to be {\em archimedean} iff $a^n\in {\cal B}(A)$ for some $n\in \N ^*$. $A$ is said to be {\em hyperarchimedean} iff all its elements are archimedean. Clearly, if ${\cal B}(A)=A$, that is if the underlying bounded lattice of $A$ is a Boolean algebra, then $A$ is a hyperarchimedean residuated lattice.

Until mentioned otherwise, let $F$ be a filter of $A$. For all $a,b\in A$, we denote $a\equiv b\, (\!\!\!\!\mod F)$ and say that {\em $a$ and $b$ are congruent modulo $F$} iff $a\leftrightarrow b\in F$. It is known and easy to check that $\equiv (\!\!\!\!\mod F)$ is a congruence relation on $A$. We shall denote the quotient set $A/_{\textstyle \equiv (\!\!\!\!\mod F)}$, simply, by $A/F$, and its elements by $a/F$, with $a\in A$. So $A/F=\{a/F\ |\ a\in A\}$, where, for every $a\in A$, $a/F=\{b\in A\ |\ a\equiv b\, (\!\!\!\!\mod F)\}$ (the congruence class of $a$ with respect to $\equiv (\!\!\!\!\mod F)$). Thus, for every $a,b\in A$, $a/F=b/F$ iff $a\leftrightarrow b\in F$. Also, we shall denote by $p_F:A\rightarrow A/F$ the canonical surjection: for all $a\in A$, $p_F(a)=a/F$, and, for every $X\subseteq A$, we shall denote $X/F=p_F(X)=\{x/F\ |\ x\in X\}$. Residuated lattices form an equational class, which ensures us that the quotient set $A/F$ of $A$ with respect to the congruence relation $\equiv (\!\!\!\!\mod F)$ is a residuated lattice, with the residuated lattice operations defined canonically from those of $A$, which makes $p_F$ a surjective residuated lattice morphism. It is easy to see, from Lemma \ref{aritmlr}, (\ref{aritmlr2}), that the residuated lattices $A$ and $A/\{1\}$ are isomorphic, while the residuated lattice $A/A$ is trivial. 

It is straightforward that, for every $a,b\in A$, we have:

\begin{itemize}
\item $1/F=F$, that is: $a/F=1/F$ iff $a\in F$; consequently, $a/F=0/F$ iff $\neg \, a\in F$;
\item $a/F\leq b/F$ iff $a\rightarrow b\in F$; consequently, if $a\leq b$ then $a/F\leq b/F$.
\end{itemize}

The mapping $G\rightarrow G/F$ sets a bijection between $\{G\in {\cal F}(A)\ |\ F\subseteq G\}$ and ${\cal F}(A/F)$. Furthermore, the mapping $M\rightarrow M/F$ sets a bijection between $\{M\in {\rm Max}(A)\ |\ F\subseteq M\}$ and ${\rm Max}(A/F)$. From this we immediately get that, when $F\subseteq {\rm Rad}(A)$,  ${\rm Max}(A/F)$ is in bijection to $\{M\in {\rm Max}(A)\ |\ F\subseteq M\}={\rm Max}(A)$, and that ${\rm Rad}(A/F)={\rm Rad}(A)/F$. Consequently, $|{\rm Max}(A/{\rm Rad}(A))|=|{\rm Max}(A)|$ and ${\rm Rad}(A/{\rm Rad}(A))={\rm Rad}(A)/{\rm Rad}(A)=\{1/{\rm Rad}(A)\}$ (see also \cite[Proposition $2.12$]{eu3}).

Here is the Second Isomorphism Theorem for residuated lattices: for all filters $F$, $G$ of $A$ such that $F\subseteq G$, the residuated lattices $A/G$ and $(A/F)/_{\textstyle (G/F)}$ are isomorphic (the residuated lattice isomorphism maps $x/G\rightarrow (x/F)/_{\textstyle (G/F)}$ for every $x\in A$).

\section{Boolean Lifting Property}
\label{BLP}

In this section we shall define the Boolean Lifting Property for residuated lattices and provide several examples related to it.

Throughout this section, unless mentioned otherwise, $A$ will be an arbitrary residuated lattice and $F$ will be an arbitrary filter of $A$.

The canonical morphism $p_F:A\rightarrow A/F$ induces a Boolean morphism ${\cal B}(p_F):{\cal B}(A)\rightarrow {\cal B}(A/F)$. The range of this Boolean morphism is ${\cal B}(p_F)({\cal B}(A))=p_F({\cal B}(A))={\cal B}(A)/F$.

\begin{lemma}
\begin{enumerate}
\item\label{lifted1} ${\cal B}(A)=\{x\in A\ |\ x\vee \neg \, x=1\}$;
\item\label{lifted2} ${\cal B}(A)/F=\{x/F\ |\ x\in A,x\vee \neg \, x=1\}$;
\item\label{lifted3} ${\cal B}(A/F)=\{x/F\ |\ x\in A,x\vee \neg \, x\in F\}$;
\item\label{lifted4} ${\cal B}(A)/F\subseteq {\cal B}(A/F)$.
\end{enumerate}\label{lifted}\end{lemma}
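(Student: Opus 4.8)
The plan is to prove each of the four items in turn, since (\ref{lifted1}) is essentially a restatement of an earlier fact, items (\ref{lifted2}) and (\ref{lifted3}) are descriptions of the two sets obtained by applying a Boolean-center construction, and (\ref{lifted4}) follows once the first three are in hand. For (\ref{lifted1}), I would simply invoke Lemma \ref{aritmcbool}, (\ref{aritmcbool3}), which states that $x\in {\cal B}(A)$ iff $x\vee \neg \, x=1$; this gives the set equality immediately.

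For (\ref{lifted2}), I would combine (\ref{lifted1}) with the definition of the quotient notation. By definition ${\cal B}(A)/F=p_F({\cal B}(A))=\{x/F\ |\ x\in {\cal B}(A)\}$, and substituting the characterization from (\ref{lifted1}) rewrites the index set as $\{x\in A\ |\ x\vee \neg \, x=1\}$, which is exactly the claimed description. This step is purely formal once (\ref{lifted1}) is established.

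The substantive item is (\ref{lifted3}). Here I want to show that a class $x/F$ lies in ${\cal B}(A/F)$ precisely when $x\vee \neg \, x\in F$. The key observation is that $p_F$ is a residuated lattice morphism, so it commutes with $\vee $ and with $\neg $ (negation is defined from $\rightarrow $ and $0$, both preserved); hence $(x/F)\vee \neg \, (x/F)=(x\vee \neg \, x)/F$. Applying the characterization of the Boolean center in the quotient $A/F$ (again Lemma \ref{aritmcbool}, (\ref{aritmcbool3}), now read in $A/F$), the element $x/F$ is Boolean in $A/F$ iff $(x/F)\vee \neg \, (x/F)=1/F$, i.e. iff $(x\vee \neg \, x)/F=1/F$. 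By the earlier recorded fact that $a/F=1/F$ iff $a\in F$, this is equivalent to $x\vee \neg \, x\in F$. Since every element of $A/F$ has the form $x/F$ for some $x\in A$, this yields the desired set equality. I expect the only mild subtlety to be making explicit that $\neg $ is preserved by $p_F$ and that the characterization of ${\cal B}(-)$ is being applied inside the quotient rather than in $A$; both are routine but worth stating.

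Finally, for (\ref{lifted4}) I would compare the descriptions in (\ref{lifted2}) and (\ref{lifted3}): a generator of ${\cal B}(A)/F$ comes from some $x$ with $x\vee \neg \, x=1$, and since $1\in F$ (every filter contains $1$), such an $x$ satisfies $x\vee \neg \, x\in F$, so $x/F\in {\cal B}(A/F)$. Thus the inclusion is immediate, and it also matches the general fact already noted in the excerpt that a residuated lattice morphism sends Boolean elements to Boolean elements, specialized to $p_F$. The main obstacle, if any, is entirely in (\ref{lifted3}); the rest is bookkeeping.
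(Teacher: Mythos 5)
Your proposal is correct and follows essentially the same route as the paper: item (\ref{lifted1}) is Lemma \ref{aritmcbool}, (\ref{aritmcbool3}); item (\ref{lifted2}) follows formally; item (\ref{lifted3}) is obtained by applying that same characterization inside $A/F$, using that $p_F$ preserves $\vee$ and $\neg$ and that $a/F=1/F$ iff $a\in F$; and item (\ref{lifted4}) follows from (\ref{lifted2}), (\ref{lifted3}) and $1\in F$. The only difference is that you spell out explicitly (the preservation of $\neg$ by $p_F$ and the surjectivity of $p_F$) what the paper leaves implicit in its chain of set equalities.
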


\begin{proof} (\ref{lifted1}) is exactly Lemma \ref{aritmcbool}, (\ref{aritmcbool3}), and (\ref{lifted2}) follows from (\ref{lifted1}).

\noindent (\ref{lifted3}) By (\ref{lifted1}), ${\cal B}(A/F)=\{x/F\ |\ x\in A,x/F\vee \neg \, (x/F)=1/F\}=\{x/F\ |\ x\in A,(x\vee \neg \, x)/F=1/F\}=\{x/F\ |\ x\in A,x\vee \neg \, x\in F\}$.

\noindent (\ref{lifted4}) follows from (\ref{lifted2}), (\ref{lifted3}) and the fact that $1\in F$.\end{proof}

\begin{definition} We say that a Boolean element $f\in {\cal B}(A/F)$ can be {\em lifted} iff there exists a Boolean element $e\in {\cal B}(A)$ such that $e/F=f$. In other words, $f\in {\cal B}(A/F)$ can be lifted iff $f\in {\cal B}(A)/F$.

We say that the filter $F$ has {\em the Boolean lifting property (BLP)} iff all Boolean elements of $A/F$ can be lifted. In other words, $F$ has BLP iff the Boolean morphism ${\cal B}(p_F):{\cal B}(A)\rightarrow {\cal B}(A/F)$ is surjective. In symbols: $F$ has BLP iff ${\cal B}(A/F)={\cal B}(A)/F$.

We say that the residuated lattice $A$ has {\em the Boolean lifting property (BLP)} iff all of its filters have BLP.\end{definition}

The definition of BLP for residuated lattices is inspired by the Lifting Idempotents Property (LIP) for rings (\cite{nic}).

\begin{remark}{\rm \cite{figele}, \cite{adinggll}, \cite{leo}} If $A$ is a BL--algebra, then ${\rm Rad}(A)$ has BLP.\end{remark}

In \cite{eu3}, we have proven a structure theorem for maximal residuated lattices whose radical has BLP. See below, in Example \ref{exfarablp}, a residuated lattice whose radical does not have BLP (in particular, this residuated lattice does not have BLP).

\begin{openquestion} Can sufficient, or even necessary and sufficient conditions be provided for a residuated lattice $A$ to be such that ${\rm Rad}(A)$ has BLP? Of course, we are referring to conditions which are not trivial (such as requesting $A$ to have BLP) and involve solely arithmetic properties, properties of the prime and maximal spectrum, and/or related to important classes of residuated lattices $A$ may need to belong to, although we welcome any characterization that would turn to be useful for a further study of this kind of residuated lattices.\end{openquestion}

\begin{note} The previous open question is certainly worth studying. This is because, concerning the study of the validity of the BLP for individual filters of a residuated lattice $A$, probably the filter with respect to which such an investigation is of the greatest importance is ${\rm Rad}(A)$. Just see below results that show cases when the fact that ${\rm Rad}(A)$ has BLP implies (so it is equivalent to) the fact that $A$ has BLP: Propositions \ref{p3star} and \ref{semiloc}, Corollaries \ref{sperfect}, \ref{lafinite}, \ref{corsemiloc}, \ref{cortare}, \ref{totcortare}, \ref{mymy} and \ref{wowwow}, Proposition \ref{resultsflow}, Corollary \ref{furtherflow}.\end{note}

\begin{proposition}
\begin{enumerate}
\item\label{filtblp1} The trivial filter and the improper filter have BLP. In the case of the trivial filter, the image of the canonical morphism through the functor ${\cal B}$ is bijective.
\item\label{filtblp2} If ${\cal B}(A/F)=\{0/F,1/F\}$, then the filter $F$ has BLP.\end{enumerate}\label{filtblp}\end{proposition}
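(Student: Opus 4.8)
The plan is to reduce everything to the single inclusion that is not automatic. Recall that the definition gives that $F$ has BLP iff ${\cal B}(A/F)={\cal B}(A)/F$, and that Lemma~\ref{lifted}, (\ref{lifted4}) already provides ${\cal B}(A)/F\subseteq {\cal B}(A/F)$ for every filter $F$. So in each case it suffices to check the reverse inclusion ${\cal B}(A/F)\subseteq {\cal B}(A)/F$ (and, for the trivial filter, to observe injectivity separately for the extra claim). I would treat the three assertions one at a time, all of them being short.

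For the trivial filter $F=\{1\}$, I would first argue that $p_{\{1\}}$ is an isomorphism: if $a/\{1\}=b/\{1\}$, then $a\leftrightarrow b\in \{1\}$, i.e.\ $a\leftrightarrow b=1$, which by Lemma~\ref{aritmlr}, (\ref{aritmlr2}) forces $a=b$; hence the canonical surjection is also injective, so it is a residuated lattice isomorphism (this is exactly the already--noted fact that $A$ and $A/\{1\}$ are isomorphic). Since ${\cal B}$ is a functor from residuated lattices to Boolean algebras, it sends the isomorphism $p_{\{1\}}$ to an isomorphism of Boolean algebras ${\cal B}(p_{\{1\}})\colon {\cal B}(A)\to {\cal B}(A/\{1\})$. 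This map is in particular surjective, which is precisely BLP for $\{1\}$, and it is bijective, which is the additional statement about the image of the canonical morphism through ${\cal B}$. For the improper filter $F=A$, I would use that $A/A$ is the trivial residuated lattice, so $0/A=1/A$ and ${\cal B}(A/A)$ is the one--element Boolean algebra $\{1/A\}$; since $p_A$ collapses all of $A$ to this single element and ${\cal B}(A)\neq \emptyset$ (indeed $1\in {\cal B}(A)$), we also get ${\cal B}(A)/A=p_A({\cal B}(A))=\{1/A\}$, so the two sides coincide and $A$ has BLP.

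For part (\ref{filtblp2}), I would start from the standing inclusion ${\cal B}(A)/F\subseteq {\cal B}(A/F)$ and exploit that $0,1\in {\cal B}(A)$, so that $0/F,1/F\in {\cal B}(A)/F$, giving $\{0/F,1/F\}\subseteq {\cal B}(A)/F$. Feeding in the hypothesis ${\cal B}(A/F)=\{0/F,1/F\}$ yields the chain ${\cal B}(A/F)=\{0/F,1/F\}\subseteq {\cal B}(A)/F\subseteq {\cal B}(A/F)$, which forces equality throughout; hence ${\cal B}(A/F)={\cal B}(A)/F$, i.e.\ $F$ has BLP.

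I do not expect a genuine obstacle here: the content is bookkeeping around the definitions and Lemma~\ref{lifted}, (\ref{lifted4}). The only two points requiring a little care are invoking functoriality of ${\cal B}$ (to pass from ``$p_{\{1\}}$ is an isomorphism'' to ``${\cal B}(p_{\{1\}})$ is bijective'') rather than re--checking injectivity by hand, and remembering that for $F=A$ the quotient is trivial so that both members of the desired equality degenerate to a single class.
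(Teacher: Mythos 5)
Your proposal is correct and follows essentially the same route as the paper: for the trivial filter, both arguments pass through the isomorphism $A\cong A/\{1\}$ and functoriality of ${\cal B}$; for the improper filter, both collapse everything to the single class $1/A$; and for part (\ref{filtblp2}), both use $\{0,1\}\subseteq {\cal B}(A)$ together with Lemma \ref{lifted}, (\ref{lifted4}) to squeeze the two sets into equality. No gaps.
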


\begin{proof} (\ref{filtblp1}) It is an immediate consequence of Lemma \ref{aritmlr}, (\ref{aritmlr2}), that $\equiv (\!\!\!\!\mod \{1\})$ is the equality between elements of $A$, hence $p_{\{1\}}:A\rightarrow A/\{1\}$ is a residuated lattice isomorphism. Then, by applying to it the functor ${\cal B}$, we get that ${\cal B}(p_{\{1\}}):{\cal B}(A)\rightarrow {\cal B}(A/\{1\})$ is a Boolean isomorphism, thus a bijection, thus a surjection, so $\{1\}$ has BLP.

Every $a\in A$ has $a/A=1/A$, hence $A/A=\{1/A\}=\{p_A(1)\}=B(A)/A$. Therefore A has BLP. This statement could also have been deduced from (\ref{filtblp2}) below.

\noindent (\ref{filtblp2}) Assume that ${\cal B}(A/F)=\{0/F,1/F\}$. Since $\{0,1\}\subseteq {\cal B}(A)$, it follows that ${\cal B}(A/F)=\{0/F,1/F\}\subseteq {\cal B}(A)/F$, hence ${\cal B}(A)/F={\cal B}(A/F)$ by Lemma \ref{lifted}, (\ref{lifted4}). This means that $F$ has BLP.\end{proof}

\begin{remark} (\ref{filtblp1}) from the previous lemma shows that, if ${\cal F}(A)=\{\{1\},A\}$, then $A$ has BLP, that is every residuated lattice that is either trivial or simple has BLP.\label{splblp}\end{remark}

\begin{remark} Any Boolean algebra induces a residuated lattice with BLP. This is because, if $A$ is a Boolean algebra, from which we obtain a residuated lattice in the usual way, then ${\cal B}(A)=A$, hence, for every filter $F$ of $A$, ${\cal B}(A)/F=A/F\supseteq {\cal B}(A/F)\supseteq {\cal B}(A)/F$, therefore ${\cal B}(A)/F={\cal B}(A/F)$, so $F$ has BLP. Also, ${\cal B}(A/F)=A/F$, that is $A/F$ is also a Boolean algebra, for every filter $F$ of $A$; this fact could have been noticed in many different ways, including straightforward calculation: since every $x\in A$ has a complement $\neg \, x\in A$, it follows that every $x/F\in A/F$ has a complement $\neg \, x/F=\neg \, (x/F)\in A/F$, thus ${\cal B}(A/F)=A/F$.\label{boolblp}\end{remark}

Next we shall provide an example of residuated lattice with BLP which is not a Boole algebra and has three filters, hence it has a filter with BLP which is both non--trivial and proper. Then we shall give a remark that generalizes this example. Afterwards, we shall provide an example of a finite distributive residuated lattice without BLP.

\begin{example} Let $A=\{0,a,1\}$ be the three--element chain ($0<a<1$), organized as a residuated lattice as in Example \ref{exutil}. Then $[a)=\{a,1\}$ is a filter of this residuated lattice which is both non--trivial and proper. $0/[a)=\{0\}$ and $a/[a)=1/[a)=[a)$, thus $A/[a)=\{0/[a),1/[a)\}$, hence ${\cal B}(A/[a))=\{0/[a),1/[a)\}$, therefore $[a)$ has BLP, by Lemma \ref{filtblp}, (\ref{filtblp2}). Actually, since ${\cal B}(A)=\{0,1\}$, with $0\neq 1$, and $0/[a)\neq 1/[a)$, it follows that ${\cal B}(p_{[a)})$ is bijective.

${\cal F}(A)=\{[0),[a),[1)\}=\{A,\{a,1\},\{1\}\}$, and $A$ and $\{1\}$ have BLP by Lemma \ref{filtblp}, (\ref{filtblp1}).

Therefore $A$ has BLP. Of course, $A$ is not a Boolean algebra.\label{coolex}\end{example}

\begin{remark} Any linearly ordered residuated lattice has BLP (regardless of its exact residuated lattice structure).

Indeed, if $A$ is a linearly ordered residuated lattice, then obviously ${\cal B}(A)=\{0,1\}$, and, for every filter $F$ of $A$, the residuated lattice $A/F$ is also linearly ordered, hence ${\cal B}(A/F)=\{0/F,1/F\}$, hence $F$ has BLP, as shown by Lemma \ref{filtblp}, (\ref{filtblp2}). Therefore $A$ has BLP.

Actually, it is easy to notice that:

\begin{itemize}
\item if $A$ is the one--element chain, then its only filter, $A$, has ${\cal B}(p_A)$ bijective;
\item if $A$ is a chain with at least two elements, then every proper filter $F$ of $A$ has ${\cal B}(p_F)$ bijective, while ${\cal B}(p_A)$ is surjective and non--injective.\end{itemize}\label{chainblp}\end{remark}

\begin{example} Consider the following example of residuated lattice from \cite{ior}: $A=\{0,a,b,c,1\}$, with the bounded lattice structure given by the Hasse diagram below, $\odot =\wedge $ and the implication given by the following table:

\begin{center}
\begin{tabular}{cc}
\begin{picture}(70,85)(0,0)
\put(30,10){\line(-1,1){20}}
\put(30,10){\line(1,1){20}}
\put(30,50){\line(-1,-1){20}}
\put(30,50){\line(1,-1){20}}
\put(30,50){\line(0,1){20}}
\put(30,10){\circle*{3}}

\put(10,30){\circle*{3}}
\put(50,30){\circle*{3}}
\put(30,50){\circle*{3}}
\put(30,70){\circle*{3}}
\put(28,1){$0$}
\put(3,27){$a$}
\put(52,27){$b$}
\put(33,49){$c$}
\put(28,73){$1$}
\end{picture}
&\hspace*{15pt}
\begin{picture}(70,85)(0,0)
\put(0,39){\begin{tabular}{c|ccccc}
$\rightarrow $ & $0$ & $a$ & $b$ & $c$ & $1$ \\ \hline
$0$ & $1$ & $1$ & $1$ & $1$ & $1$ \\

$a$ & $b$ & $1$ & $b$ & $1$ & $1$ \\
$b$ & $a$ & $a$ & $1$ & $1$ & $1$ \\
$c$ & $0$ & $a$ & $b$ & $1$ & $1$ \\
$1$ & $0$ & $a$ & $b$ & $c$ & $1$
\end{tabular}}
\end{picture}
\end{tabular}
\end{center}

Then ${\cal B}(A)=\{0,1\}$. Let us consider the filter $[c)=\{c,1\}$, and the element $a\notin {\cal B}(A)$. $\neg \, a=a\rightarrow 0=b$, thus $a\vee \neg \, a=a\vee b=c\in [c)$, hence $a/[c)\in {\cal B}(A/[c))$, as shown by Lemma \ref{lifted}, (\ref{lifted3}). $a\notin [c)=\{c,1\}$, thus $a/[c)\neq c/[c)$ and $a/[c)\neq 1/[c)$. $a\leftrightarrow 0=(a\rightarrow 0)\wedge (0\rightarrow a)=b\wedge 1=b\notin [c)$, hence $a/[c)\neq 0/[c)$, and $a\leftrightarrow b=(a\rightarrow b)\wedge (b\rightarrow a)=b\wedge a=0\notin [c)$, hence $a/[c)\neq b/[c)$. Therefore $a/[c)=\{a\}$.

Summarizing the above, we have: $a/[c)\in {\cal B}(A/[c))$, while $a\notin {\cal B}(A)$ and $a/[c)=\{a\}$, hence $a/[c)\notin {\cal B}(A)/[c)$, therefore ${\cal B}(A/[c))\neq {\cal B}(A)/[c)$, which means that $[c)$ does not have BLP. Hence $A$ does not have BLP. Also, notice that the maximal filters of $A$ are $[a)$ and $[b)$, hence ${\rm Rad}(A)=[c)$, thus ${\rm Rad}(A)$ does not have BLP in this example.\label{exfarablp}\end{example}

\begin{proposition}
\begin{enumerate}
\item\label{psimaublp1} Every prime filter of a residuated lattice has BLP.
\item\label{psimaublp2} Every maximal filter of a residuated lattice has BLP.
\end{enumerate}\label{psimaublp}\end{proposition}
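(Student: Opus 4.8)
The plan is to deduce both statements from Proposition \ref{filtblp}, (\ref{filtblp2}), which guarantees that any filter $F$ with ${\cal B}(A/F)=\{0/F,1/F\}$ automatically has BLP. So it suffices to show that the Boolean center of the quotient by a prime filter is trivial. Moreover, every maximal filter is prime by Lemma \ref{maxsiprime}, (\ref{maxsiprime1}) (${\rm Max}(A)\subseteq {\rm Spec}(A)$), so part (ii) is an immediate consequence of part (i); I would therefore only prove (i), and then obtain (ii) as a corollary of it together with that inclusion.

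For (i), let $P$ be a prime filter and let $f\in {\cal B}(A/P)$ be arbitrary. By Lemma \ref{lifted}, (\ref{lifted3}), I can write $f=x/P$ for some $x\in A$ satisfying $x\vee \neg \, x\in P$. This is exactly the point where primeness enters: from $x\vee \neg \, x\in P$ and the defining property of a prime filter, we get $x\in P$ or $\neg \, x\in P$.

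In the first case, $x\in P$ gives $x/P=1/P$ (since $a/F=1/F$ iff $a\in F$); in the second case, $\neg \, x\in P$ gives $x/P=0/P$, by the congruence-class identity $a/F=0/F$ iff $\neg \, a\in F$ recorded in the preliminaries. Hence every $f\in {\cal B}(A/P)$ equals $0/P$ or $1/P$, so ${\cal B}(A/P)\subseteq \{0/P,1/P\}$; the reverse inclusion is immediate because $0,1\in {\cal B}(A)$, whence their classes lie in ${\cal B}(A/P)$. This yields ${\cal B}(A/P)=\{0/P,1/P\}$, and Proposition \ref{filtblp}, (\ref{filtblp2}), then gives that $P$ has BLP, which proves (i).

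There is no substantial obstacle here: the argument is a direct translation of primeness into a statement about Boolean classes, and the proof is essentially two lines once the reduction to Proposition \ref{filtblp} is in place. The only thing to handle carefully is the bookkeeping of the two congruence-class facts ($a/F=1/F$ iff $a\in F$, and $a/F=0/F$ iff $\neg \, a\in F$) and the correct use of Lemma \ref{lifted}, (\ref{lifted3}) to realize an arbitrary element of ${\cal B}(A/P)$ in the form $x/P$ with $x\vee \neg \, x\in P$. One could instead verify directly that $A/P$ has no nontrivial complemented elements, but routing through Proposition \ref{filtblp} is cleaner and avoids recomputing the quotient operations.
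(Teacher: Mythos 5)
Your proof is correct and follows essentially the same route as the paper: both use Lemma \ref{lifted}, (\ref{lifted3}), to write an arbitrary Boolean element of $A/P$ as $x/P$ with $x\vee \neg \, x\in P$, then apply primeness to force $x/P\in \{0/P,1/P\}$, and both deduce (ii) from (i) via Lemma \ref{maxsiprime}, (\ref{maxsiprime1}). The only cosmetic difference is that you argue directly and package the conclusion through Proposition \ref{filtblp}, (\ref{filtblp2}) (explicitly noting that the quotient by a prime filter has trivial Boolean center), whereas the paper runs the identical computation as a proof by contradiction.
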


\begin{proof} (\ref{psimaublp1}) Let $P$ be a prime filter of the residuated lattice $A$. Assume by absurdum that $P$ does not have BLP, that is ${\cal B}(A)/P\neq {\cal B}(A/P)$, which means that ${\cal B}(A)/P\subsetneq {\cal B}(A/P)$ (see Lemma \ref{lifted}, (\ref{lifted4})). This means that there exists an element $x\in A$ such that $x\in {\cal B}(A/P)$, but $x\notin {\cal B}(A)/P$. Then, according to Lemma \ref{lifted}, (\ref{lifted3}), $x\vee \neg \, x\in P$, but $x\notin P$, because otherwise $x/P=1/P\in {\cal B}(A)/P$. Since $P$ is a prime filter, it follows that $\neg \, x\in P$, that is $\neg \, x/P=1/P$, that is $\neg \, (x/P)=1/P$, thus $x/P=0/P\in {\cal B}(A)/P$, which is a contradiction to the choice of $x$. We have used Lemma \ref{aritmlr}, (\ref{aritmlr3}).

\noindent (\ref{psimaublp2}) By (\ref{psimaublp1}) and Lemma \ref{maxsiprime}, (\ref{maxsiprime1}).\end{proof}

Now let us focus on filters $F$ with ${\cal B}(p_F)$ injective.

\begin{proposition} For every filter $F$ of $A$, the following are equivalent:

\begin{enumerate}
\item\label{filtcuinj1} ${\cal B}(p_F)$ is injective; 
\item\label{filtcuinj2} ${\cal B}(A)\cap F=\{1\}$.
\end{enumerate}\label{filtcuinj}\end{proposition}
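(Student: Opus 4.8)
The plan is to recognize that ${\cal B}(p_F)$ is nothing but the restriction of the canonical surjection $p_F$ to ${\cal B}(A)$, so that its injectivity can be read off from the congruence $\equiv (\!\!\!\mod F)$, which on any two elements $a,b$ is governed by the biresiduum via the equivalence ``$a/F=b/F$ iff $a\leftrightarrow b\in F$''. The guiding principle is that a Boolean (indeed any) morphism is injective exactly when nothing but the top element is sent to the top, and the top of $A/F$ is $1/F=F$. I will prove the two implications separately, using throughout that ${\cal B}(A)$ is a subalgebra of the residuated lattice $A$ and the characterisation ``$a=b$ iff $a\leftrightarrow b=1$'' from Lemma \ref{aritmlr}, (\ref{aritmlr2}).

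For the direction (\ref{filtcuinj1})$\Rightarrow$(\ref{filtcuinj2}), I would argue directly. One inclusion is free since $1\in {\cal B}(A)\cap F$. For the reverse, let $e\in {\cal B}(A)\cap F$ be arbitrary. Because $e\in F$, we have $e/F=1/F$, that is ${\cal B}(p_F)(e)={\cal B}(p_F)(1)$; as $1\in {\cal B}(A)$ as well, injectivity of ${\cal B}(p_F)$ forces $e=1$. Hence ${\cal B}(A)\cap F\subseteq \{1\}$, and the desired equality follows. This is the short, unproblematic half.

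The substance is in (\ref{filtcuinj2})$\Rightarrow$(\ref{filtcuinj1}). Assuming ${\cal B}(A)\cap F=\{1\}$, I would take $e,f\in {\cal B}(A)$ with ${\cal B}(p_F)(e)={\cal B}(p_F)(f)$, i.e.\ $e/F=f/F$, and aim to conclude $e=f$. By the congruence criterion this means $e\leftrightarrow f\in F$. The key observation is that $e\leftrightarrow f=(e\rightarrow f)\wedge (f\rightarrow e)$ again lies in ${\cal B}(A)$, since ${\cal B}(A)$ is closed under all the residuated lattice operations. Therefore $e\leftrightarrow f\in {\cal B}(A)\cap F=\{1\}$, so $e\leftrightarrow f=1$, and Lemma \ref{aritmlr}, (\ref{aritmlr2}), gives $e=f$. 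This establishes injectivity.

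The only step requiring care — and the place I expect to be the main (mild) obstacle — is ensuring $e\leftrightarrow f\in {\cal B}(A)$, which is exactly where the fact that ${\cal B}(A)$ is a subalgebra of $A$ (rather than merely a sublattice) is used; without closure under $\rightarrow$ one could not transport the membership $e\leftrightarrow f\in F$ into the set ${\cal B}(A)\cap F$ and invoke the hypothesis. Everything else is a direct translation between the congruence modulo $F$, membership in $F$, and the biresiduum, so no genuine computation is needed beyond the cited lemmas.
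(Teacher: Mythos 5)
Your proof is correct and follows essentially the same route as the paper: the forward direction evaluates ${\cal B}(p_F)$ at an arbitrary $e\in {\cal B}(A)\cap F$ and at $1$, and the converse direction passes from $e/F=f/F$ to $e\leftrightarrow f\in {\cal B}(A)\cap F=\{1\}$ using that ${\cal B}(A)$ is a subalgebra, then concludes $e=f$ via Lemma \ref{aritmlr}, (\ref{aritmlr2}). The step you flagged as the main obstacle is indeed the one point the paper also relies on (``But then $x\leftrightarrow y\in {\cal B}(A)$ also''), so there is nothing to add.
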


\begin{proof} (\ref{filtcuinj2})$\Rightarrow $(\ref{filtcuinj1}): Assume that ${\cal B}(A)\cap F=\{1\}$ and let $x,y\in {\cal B}(A)$ such that ${\cal B}(p_F)(x)={\cal B}(p_F)(y)$, that is $p_F(x)=p_F(y)$, which means that $x/F=y/F$, that is $x\leftrightarrow y\in F$. But then $x\leftrightarrow y\in {\cal B}(A)$ also, hence $x\leftrightarrow y\in {\cal B}(A)\cap F=\{1\}$, so $x\leftrightarrow y=1$, which means that $x=y$, according to Lemma \ref{aritmlr}, (\ref{aritmlr2}). 

\noindent (\ref{filtcuinj1})$\Rightarrow $(\ref{filtcuinj2}): Assume that ${\cal B}(p_F)$ is injective, and let $x\in {\cal B}(A)\cap F$, so $x\in F$, thus $x/F=1/F$, that is $p_F(x)=p_F(1)$, which means that ${\cal B}(p_F)(x)={\cal B}(p_F)(1)$ since both $x,1\in {\cal B}(A)$. The injectivity of ${\cal B}(p_F)$ now shows that $x=1$. Hence ${\cal B}(A)\cap F\subseteq \{1\}$. But $1\in {\cal B}(A)$ and $1\in F$, thus $\{1\}\subseteq {\cal B}(A)\cap F$. Therefore ${\cal B}(A)\cap F=\{1\}$.\end{proof}

\begin{corollary} If ${\cal B}(A)=\{0,1\}$, then every proper filter $F$ of $A$ has ${\cal B}(p_F)$ injective.\label{pfauinj}\end{corollary}

\begin{corollary} If $(F_i)_{i\in I}$ is a non--empty family of filters of $A$ such that ${\cal B}(p_{F_i})$ is injective for every $i\in I$, then ${\cal B}(p_{\bigcap _{i\in I}F_i})$ is injective.\end{corollary}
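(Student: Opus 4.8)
The plan is to translate everything through the arithmetic criterion established in Proposition \ref{filtcuinj}, which states that ${\cal B}(p_F)$ is injective precisely when ${\cal B}(A)\cap F=\{1\}$. This reduces the problem from a statement about Boolean morphisms on quotients to a purely set--theoretic computation about how the Boolean center meets an intersection of filters.

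First I would apply the implication (\ref{filtcuinj1})$\Rightarrow$(\ref{filtcuinj2}) of Proposition \ref{filtcuinj} to each index: since ${\cal B}(p_{F_i})$ is injective for every $i\in I$, we obtain ${\cal B}(A)\cap F_i=\{1\}$ for every $i\in I$. Next I would recall from the preliminaries that the intersection $F:=\bigcap _{i\in I}F_i$ of a family of filters is again a filter of $A$, so that the assertion about ${\cal B}(p_F)$ is meaningful and we may hope to apply Proposition \ref{filtcuinj} to $F$ itself.

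The heart of the argument is then the distribution of intersection over the Boolean center:
\[
{\cal B}(A)\cap \bigcap _{i\in I}F_i=\bigcap _{i\in I}\bigl({\cal B}(A)\cap F_i\bigr)=\bigcap _{i\in I}\{1\}=\{1\},
\]
where the final equality relies on the hypothesis that $I$ is non--empty, so that the intersection of the constant family $\{1\}$ is $\{1\}$. Applying the reverse implication (\ref{filtcuinj2})$\Rightarrow$(\ref{filtcuinj1}) of Proposition \ref{filtcuinj} to the filter $F$ then yields that ${\cal B}(p_F)={\cal B}(p_{\bigcap _{i\in I}F_i})$ is injective, which is exactly the claim.

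As for difficulty, there is essentially no genuine obstacle: once injectivity is rephrased via Proposition \ref{filtcuinj} as the condition ${\cal B}(A)\cap F=\{1\}$, the result follows immediately from the elementary fact that intersection distributes over intersection. The only point deserving a moment's care is the non--emptiness of $I$, which is precisely what prevents the empty intersection (equal to the improper filter $A$) from intruding and forcing ${\cal B}(A)\cap F$ to equal all of ${\cal B}(A)$ rather than $\{1\}$.
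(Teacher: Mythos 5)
Your proof is correct and is precisely the argument the paper intends: the corollary is stated immediately after Proposition \ref{filtcuinj} with no written proof, its justification being exactly the reduction to ${\cal B}(A)\cap F_i=\{1\}$ for each $i$, the set identity ${\cal B}(A)\cap \bigcap_{i\in I}F_i=\bigcap_{i\in I}({\cal B}(A)\cap F_i)=\{1\}$ for non--empty $I$, and the converse direction of the same proposition. Your closing remark about the empty family (whose intersection is the improper filter $A$, giving ${\cal B}(A)\cap A={\cal B}(A)\neq\{1\}$ whenever $A$ is non--trivial) correctly identifies why the non--emptiness hypothesis is there.
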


\begin{corollary}
\begin{enumerate}
\item\label{filtinrad1} Any filter $F$ of $A$ such that $F\subseteq {\rm Rad}(A)$ has ${\cal B}(p_F)$ injective.
\item\label{filtinrad2} ${\cal B}(p_{D(A)})$ is injective.
\end{enumerate}\label{filtinrad}\end{corollary}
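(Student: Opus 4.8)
The plan is to reduce both parts to the injectivity criterion of Proposition \ref{filtcuinj}, which states that ${\cal B}(p_F)$ is injective if and only if ${\cal B}(A)\cap F=\{1\}$. Thus for each of the filters in question it suffices to verify that its intersection with the Boolean center is the trivial filter $\{1\}$, and the nontrivial content has already been packaged into the lemmas of the preliminaries.

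For (\ref{filtinrad1}), I would take a filter $F$ with $F\subseteq {\rm Rad}(A)$ and simply chase inclusions: ${\cal B}(A)\cap F\subseteq {\cal B}(A)\cap {\rm Rad}(A)$, and Lemma \ref{lemaradsid}, (\ref{boolsirad}), gives ${\cal B}(A)\cap {\rm Rad}(A)=\{1\}$. Since $1$ lies in both ${\cal B}(A)$ and $F$, the reverse inclusion $\{1\}\subseteq {\cal B}(A)\cap F$ holds as well, so ${\cal B}(A)\cap F=\{1\}$, and Proposition \ref{filtcuinj} delivers the injectivity of ${\cal B}(p_F)$.

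For (\ref{filtinrad2}), the cleanest route is to observe via Lemma \ref{lemaradsid}, (\ref{dense}), that $D(A)$ is itself a filter of $A$ satisfying $D(A)\subseteq {\rm Rad}(A)$, so (\ref{filtinrad2}) becomes an immediate instance of (\ref{filtinrad1}). Alternatively, one can sidestep (\ref{filtinrad1}) and invoke Lemma \ref{lemaradsid}, (\ref{consecinta}), directly: since ${\cal B}(A)\cap D(A)=\{1\}$, Proposition \ref{filtcuinj} again yields injectivity of ${\cal B}(p_{D(A)})$. There is essentially no obstacle, as the only facts doing real work, namely ${\cal B}(A)\cap {\rm Rad}(A)=\{1\}$ and $D(A)\subseteq {\rm Rad}(A)$, are supplied by Lemma \ref{lemaradsid}; the sole point requiring a word of care is to note that $1$ belongs to each intersection, so that it equals $\{1\}$ rather than merely being contained in it.
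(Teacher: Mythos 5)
Your proposal is correct and follows essentially the same route as the paper: part (\ref{filtinrad1}) via Lemma \ref{lemaradsid}, (\ref{boolsirad}), the fact that every filter contains $1$, and Proposition \ref{filtcuinj}; and part (\ref{filtinrad2}) by either of the two routes the paper itself mentions (reduction to (\ref{filtinrad1}) via Lemma \ref{lemaradsid}, (\ref{dense}), or directly via Lemma \ref{lemaradsid}, (\ref{consecinta}), and Proposition \ref{filtcuinj}). No gaps; the argument matches the paper's proof in all essentials.
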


\begin{proof} (\ref{filtinrad1}) Lemma \ref{lemaradsid}, (\ref{boolsirad}), and the fact that every filter contains 1 shows that each filter $F$ that is included in the radical of $A$ satisfies: ${\cal B}(A)\cap F=\{1\}$. Now apply Proposition \ref{filtcuinj}.

\noindent (\ref{filtinrad2}) By Proposition \ref{filtcuinj} and Lemma \ref{lemaradsid}, (\ref{consecinta}), or simply by (\ref{filtinrad1}) above and Lemma \ref{lemaradsid}, (\ref{dense}).\end{proof}

\section{Characterization of Boolean Lifting Property}
\label{charBLP}

In this section we prove two characterization theorems for residuated lattices with BLP (Proposition \ref{caractlrblp} and \ref{qlocblp}), which are then being used to obtain further properties and examples for residuated lattices with BLP.

Throughout this section, unless mentioned otherwise, $A$ will be an arbitrary residuated lattice.

\begin{lemma} For all $x\in A$, $x/[x\vee \neg \, x)\in {\cal B}(A/[x\vee \neg \, x))$.\label{olema}\end{lemma}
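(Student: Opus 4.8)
The plan is to reduce the statement immediately to the arithmetic description of the Boolean center of a quotient provided in Lemma \ref{lifted}, (\ref{lifted3}). That result characterizes ${\cal B}(A/F)$, for an arbitrary filter $F$, as the set $\{y/F\ |\ y\in A,\ y\vee \neg \, y\in F\}$. Applying this with the particular filter $F=[x\vee \neg \, x)$ and the particular element $y=x$, we see that proving $x/[x\vee \neg \, x)\in {\cal B}(A/[x\vee \neg \, x))$ amounts to verifying the single membership $x\vee \neg \, x\in [x\vee \neg \, x)$.

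First I would recall, from the description of a principal filter given in the Preliminaries, that for every $z\in A$ one has $[z)=\{y\in A\ |\ (\exists \, n\in \N ^*)\, z^n\leq y\}$. Taking $n=1$ and using $z\leq z$, this yields $z\in [z)$: every element belongs to the principal filter it generates. Instantiating $z=x\vee \neg \, x$ gives exactly $x\vee \neg \, x\in [x\vee \neg \, x)$, which is the required membership.

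Combining the two observations, Lemma \ref{lifted}, (\ref{lifted3}), then delivers $x/[x\vee \neg \, x)\in {\cal B}(A/[x\vee \neg \, x))$ at once. There is essentially no obstacle here: the only nontrivial ingredient is the already-established characterization of ${\cal B}(A/F)$, and once it is in hand the argument is a one-line verification that a generator lies in the filter it generates. The point of the lemma is presumably that it manufactures, for each $x$, a canonical filter modulo which the class of $x$ becomes Boolean; the real content therefore lies in the downstream applications rather than in the (immediate) proof itself.
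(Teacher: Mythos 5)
Your proof is correct and coincides with the paper's own argument: both invoke Lemma \ref{lifted}, (\ref{lifted3}), with $F=[x\vee \neg \, x)$ and reduce the claim to the membership $x\vee \neg \, x\in [x\vee \neg \, x)$, which holds since every element lies in the principal filter it generates. The only difference is that you spell out this last fact via the explicit description of principal filters, which the paper treats as immediate.
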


\begin{proof} Every $x\in A$ satisfies $x\vee \neg \, x\in [x\vee \neg \, x)$, hence $x/[x\vee \neg \, x)\in {\cal B}(A/[x\vee \neg \, x))$ by Lemma \ref{lifted}, (\ref{lifted3}).\end{proof}

\begin{lemma} For every $a,x\in A$:

\begin{enumerate}

\item\label{altalema0} for all $n\in \N ^*$, $x^n\leq a\rightarrow x$ and $(\neg \, x)^n\leq x\rightarrow a$;
\item\label{altalema1} there exists $k\in \N ^*$ such that $x^k\leq x\rightarrow a$ iff there exists $n\in \N ^*$ such that $x^n\leq a$;
\item\label{altalema2} there exists $k\in \N ^*$ such that $(\neg \, x)^k\leq a\rightarrow x$ iff there exists $n\in \N ^*$ such that $(\neg \, x)^n\leq \neg \, a$;
\item\label{altalema3} $a\leftrightarrow x\in [x\vee \neg \, x)$ iff $a\in [x)$ and $\neg \, a\in [\neg \, x)$.

\end{enumerate}\label{altalema}\end{lemma}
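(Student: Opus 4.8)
The plan is to prove the four parts in order, since each later part leans on the earlier ones, and then to assemble (\ref{altalema3}) from (\ref{altalema0})--(\ref{altalema2}) together with the identity $[x\vee \neg \, x)=[x)\cap [\neg \, x)$ recorded in the preliminaries (it is the instance $a:=x$, $b:=\neg \, x$ of $[a)\cap [b)=[a\vee b)$). For (\ref{altalema0}), the key observation is that $\odot $ is decreasing, so $x^n\leq x$ and $(\neg \, x)^n\leq \neg \, x$ for every $n\in \N ^*$ by Lemma \ref{aritmlr}, (\ref{aritmlr1}). Then $x^n\leq x\leq a\rightarrow x$ by Lemma \ref{aritmlr}, (\ref{aritmlr8}). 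For the second inequality I would first note $\neg \, x\leq x\rightarrow a$: indeed $x\odot \neg \, x=0\leq a$ by Lemma \ref{aritmlr}, (\ref{aritmlr7}), so the law of residuation gives $\neg \, x\leq x\rightarrow a$, whence $(\neg \, x)^n\leq \neg \, x\leq x\rightarrow a$.

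Part (\ref{altalema1}) is essentially one application of residuation plus a little bookkeeping of exponents. By the law of residuation, $x^k\leq x\rightarrow a$ holds iff $x^k\odot x=x^{k+1}\leq a$. I would make both implications explicit: from a witness $k$ on the left I take $n=k+1$ on the right; conversely, from a witness $n$ on the right I take $k=n$, since $x^{n+1}\leq x^n\leq a$ shows $x^n\leq x\rightarrow a$. (The harmless passage between ``exponent $\geq 2$'' and ``exponent $\geq 1$'' is covered by $x\leq a\Rightarrow x^2\leq x\leq a$.)

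Part (\ref{altalema2}) is the one I expect to be the main obstacle, though only mildly so. Rewriting both sides by residuation, $(\neg \, x)^k\leq a\rightarrow x$ means $(\neg \, x)^k\odot a\leq x$, and $(\neg \, x)^n\leq \neg \, a$ means $(\neg \, x)^n\odot a=0$ by Lemma \ref{aritmlr}, (\ref{aritmlr7}). The right-to-left direction is immediate, since $0\leq x$ lets me take $k=n$. For the forward direction the only genuine idea in the lemma is to multiply the inequality $(\neg \, x)^k\odot a\leq x$ on the left by $\neg \, x$: by monotonicity of $\odot $ (Lemma \ref{aritmlr}, (\ref{aritmlr1})) this yields $(\neg \, x)^{k+1}\odot a\leq \neg \, x\odot x=0$, so $(\neg \, x)^{k+1}\odot a=0$ and I may take $n=k+1$. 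The point is that multiplying by $\neg \, x$ converts the bound $x$ into $0$ through $x\odot \neg \, x=0$.

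Finally, for (\ref{altalema3}) I would expand $a\leftrightarrow x=(a\rightarrow x)\wedge (x\rightarrow a)$ and use $[x\vee \neg \, x)=[x)\cap [\neg \, x)$ to split the membership into $a\leftrightarrow x\in [x)$ and $a\leftrightarrow x\in [\neg \, x)$. For the first, $a\leftrightarrow x\in [x)$ means $x^m\leq a\rightarrow x$ and $x^m\leq x\rightarrow a$ for some $m$; the former is automatic by (\ref{altalema0}), so the condition reduces to the left-hand side of (\ref{altalema1}), i.e.\ to $a\in [x)$. Symmetrically, $a\leftrightarrow x\in [\neg \, x)$ means $(\neg \, x)^k\leq a\rightarrow x$ and $(\neg \, x)^k\leq x\rightarrow a$ for some $k$; here the second inequality is automatic by (\ref{altalema0}) and the first reduces, via (\ref{altalema2}), to $\neg \, a\in [\neg \, x)$. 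Conjoining the two equivalences gives exactly the claimed statement.
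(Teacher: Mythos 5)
Your proof is correct, and its overall skeleton (proving the parts in order, splitting $[x\vee \neg \, x)=[x)\cap [\neg \, x)$, and reducing (\ref{altalema3}) to (\ref{altalema0})--(\ref{altalema2}) by discarding the two inequalities that (\ref{altalema0}) makes automatic) is the same as the paper's. The one place where you genuinely diverge is the forward direction of (\ref{altalema2}). The paper gets it by contraposition: from $(\neg \, x)^k\leq a\rightarrow x$ and Lemma \ref{aritmlr}, (\ref{aritmlr5}), it deduces $(\neg \, x)^k\leq \neg \, x\rightarrow \neg \, a$, and then invokes (\ref{altalema1}) with $\neg \, x,\neg \, a$ in place of $x,a$ --- so in the paper (\ref{altalema2}) is literally an instance of (\ref{altalema1}) under the substitution $x\mapsto \neg \, x$, $a\mapsto \neg \, a$, which makes the duality between the two parts transparent. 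You instead argue directly: residuate to $(\neg \, x)^k\odot a\leq x$, multiply by $\neg \, x$ so that $x\odot \neg \, x=0$ collapses the right side, and conclude $(\neg \, x)^{k+1}\odot a=0$, i.e. $(\neg \, x)^{k+1}\leq \neg \, a$ by Lemma \ref{aritmlr}, (\ref{aritmlr7}). This is a self-contained computation that never uses the contraposition inequality (\ref{aritmlr5}) and does not route through (\ref{altalema1}); it is arguably more elementary, at the cost of hiding the structural symmetry the paper exploits. Your other local variations (deriving $\neg \, x\leq x\rightarrow a$ from $x\odot \neg \, x=0\leq a$ by residuation rather than from $x\rightarrow 0\leq x\rightarrow a$, and taking the witness $k=n$ rather than $n+1$ in the converse of (\ref{altalema1})) are equally valid and only cosmetic.
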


\begin{proof} Let $a,x\in A$.

\noindent (\ref{altalema0}) Let $n\in \N ^*$. By Lemma \ref{aritmlr}, (\ref{aritmlr1}) and (\ref{aritmlr8}), $x^n\leq x\leq a\rightarrow x$. Again by Lemma \ref{aritmlr}, (\ref{aritmlr1}) and (\ref{aritmlr6}), $(\neg \, x)^n\leq \neg \, x=x\rightarrow 0\leq x\rightarrow a$.

\noindent (\ref{altalema1}) If there exists $k\in \N ^*$ such that $x^k\leq x\rightarrow a$, then, by Lemma \ref{aritmlr}, (\ref{aritmlr1}) and (\ref{aritmlr4}), it follows that $x^{k+1}=x\odot x^k\leq x\odot (x\rightarrow a)\leq a$, hence $x^{k+1}\leq a$.

Conversely, if there exists $n\in \N ^*$ such that $x^n\leq a$, then, by Lemma \ref{aritmlr}, (\ref{aritmlr1}) and (\ref{aritmlr0}), it follows that $x^{n+1}=x\odot x^n\leq x\odot a\leq x\rightarrow a$, thus $x^{n+1}\leq x\rightarrow a$.

\noindent (\ref{altalema2}) If there exists $k\in \N ^*$ such that $(\neg \, x)^k\leq a\rightarrow x$, then, by Lemma \ref{aritmlr}, (\ref{aritmlr5}), and (\ref{altalema1}) in the present lemma, we get $(\neg \, x)^k\leq \neg \, x\rightarrow \neg \, a$, hence there exists $n\in \N ^*$ such that $(\neg \, x)^n\leq \neg \, a$.

Conversely, if there exists $n\in \N ^*$ such that $(\neg \, x)^n\leq \neg \, a$, then, by Lemma \ref{aritmlr}, (\ref{aritmlr1}) and (\ref{aritmlr7}), we obtain $(\neg \, x)^n\odot a=a\odot (\neg \, x)^n\leq a\odot \neg \, a=0\leq x$, so $(\neg \, x)^n\odot a\leq x$, which is equivalent to $(\neg \, x)^n\leq a\rightarrow x$ by the law of residuation.

\noindent (\ref{altalema3}) According to (\ref{altalema0}), (\ref{altalema1}) and (\ref{altalema2}) above, the following equivalences hold: $a\leftrightarrow x\in [x\vee \neg \, x)$ iff $a\leftrightarrow x\in [x)\cap [\neg \, x)$ iff $a\leftrightarrow x\in [x)$ and $a\leftrightarrow x\in [\neg \, x)$ iff there exist $k,j\in \N ^*$ such that $x^k\leq a\leftrightarrow x$ and $(\neg \, x)^j\leq a\leftrightarrow x$ iff there exist $k,j\in \N ^*$ such that $x^k\leq a\rightarrow x$, $x^k\leq x\rightarrow a$, $(\neg \, x)^j\leq a\rightarrow x$ and $(\neg \, x)^j\leq x\rightarrow a$ iff there exist $k,j\in \N ^*$ such that $x^k\leq x\rightarrow a$ and $(\neg \, x)^j\leq a\rightarrow x$ iff there exist $m,n\in \N ^*$ such that $x^m\leq a$ and $(\neg \, x)^n\leq \neg \, a$ iff $a\in [x)$ and $\neg \, a\in [\neg \, x)$.\end{proof}

\begin{lemma} For every $x\in A$, the following are equivalent:

\begin{itemize}
\item there exists $e\in {\cal B}(A)$ such that $e\leftrightarrow x\in [x\vee \neg \, x)$;
\item there exists $e\in {\cal B}(A)$ such that $e\in [x)$ and $\neg \, e\in [\neg \, x)$.
\end{itemize}\label{oaltalema}\end{lemma}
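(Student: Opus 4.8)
The plan is to recognize that this lemma is an immediate consequence of Lemma \ref{altalema}, part (\ref{altalema3}), specialized to Boolean elements. That earlier result establishes, for \emph{arbitrary} $a,x\in A$, the biconditional
\[
a \leftrightarrow x \in [x \vee \neg \, x) \quad \text{iff} \quad a \in [x) \text{ and } \neg \, a \in [\neg \, x).
\]
The essential point I would stress is that this equivalence carries no hypothesis on $a$ beyond membership in $A$; in particular it remains valid when $a$ happens to be a Boolean element, so all the arithmetic needed here has already been discharged.

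The single step is then to apply (\ref{altalema3}) with $a=e$ and let $e$ range over ${\cal B}(A)$. For each fixed $e\in {\cal B}(A)$, Lemma \ref{altalema}, (\ref{altalema3}), gives that the condition $e\leftrightarrow x\in [x\vee \neg \, x)$ is equivalent to the conjunction $e\in [x)$ and $\neg \, e\in [\neg \, x)$. Since this holds element by element, prefixing both sides with the quantifier ``there exists $e\in {\cal B}(A)$'' preserves the equivalence, and the two resulting statements are exactly the two bullet points of the present lemma. No construction is required: the witness $e$ realizing one side is literally the same $e$ realizing the other.

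I do not anticipate any genuine obstacle, since the content has been absorbed into Lemma \ref{altalema}; what remains is the purely logical observation that an equivalence holding for every $a$ descends to an equivalence of existential statements ranging over any subset of the possible $a$'s---here the subset ${\cal B}(A)$. The only thing to guard against is the temptation to reprove the biconditional from scratch via parts (\ref{altalema0})--(\ref{altalema2}); the correct and shortest move is simply to cite (\ref{altalema3}) and quantify over $e\in {\cal B}(A)$.
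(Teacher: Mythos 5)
Your proposal is correct and is exactly the paper's own proof: the paper's argument for this lemma is the single citation of Lemma \ref{altalema}, (\ref{altalema3}), instantiated at $a=e$ and then existentially quantified over $e\in {\cal B}(A)$, precisely as you describe. Nothing further is needed.
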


\begin{proof} By Lemma \ref{altalema}, (\ref{altalema3}).\end{proof}

\begin{notation} We shall denote $S(A)=\{x\in A\ |\ (\exists \, e\in {\cal B}(A))\, e\leftrightarrow x\in [x\vee \neg \, x)\}$.\end{notation}

\begin{remark} According to Lemma \ref{oaltalema}, $S(A)=\{x\in A\ |\ (\exists \, e\in {\cal B}(A))\, e\in [x)\mbox{ and }\neg \, e\in [\neg \, x)\}$.\label{rsa}\end{remark}

\begin{proposition} The following statements are equivalent:

\begin{enumerate}
\item\label{caractlrblp1} $A$ has BLP;
\item\label{caractlrblp2} for all $x\in A$, there exists $e\in {\cal B}(A)$ such that $e\leftrightarrow x\in [x\vee \neg \, x)$;
\item\label{caractlrblp3} for all $x\in A$, there exists $e\in {\cal B}(A)$ such that $e\in [x)$ and $\neg \, e\in [\neg \, x)$;
\item\label{caractlrblp4} $S(A)=A$.\end{enumerate}\label{caractlrblp}\end{proposition}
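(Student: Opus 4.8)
The plan is to treat (ii), (iii) and (iv) as essentially one condition and to invest the real effort in the equivalence of (i) with (ii). Indeed, (ii) and (iii) are equivalent pointwise in $x$ by Lemma \ref{oaltalema}, while (iv), which reads $S(A)=A$, says exactly that every $x\in A$ lies in $S(A)$; unfolding the definition of $S(A)$, this is condition (ii) verbatim (and, via Remark \ref{rsa}, condition (iii)). So once (i)$\Leftrightarrow$(ii) is settled, the remaining implications are immediate from the definitions.

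For (i)$\Rightarrow$(ii) I would fix $x\in A$ and test the hypothesis on the single filter $F=[x\vee \neg \, x)$. Lemma \ref{olema} gives $x/F\in {\cal B}(A/F)$, and since $A$ has BLP the filter $F$ has BLP, i.e.\ ${\cal B}(A/F)={\cal B}(A)/F$. Hence $x/F\in {\cal B}(A)/F$, which produces $e\in {\cal B}(A)$ with $e/F=x/F$, that is $e\leftrightarrow x\in F=[x\vee \neg \, x)$. This is precisely (ii).

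The converse (ii)$\Rightarrow$(i) rests on the observation that $[x\vee \neg \, x)$ is the smallest filter modulo which $x$ becomes Boolean. Let $F$ be an arbitrary filter. By Lemma \ref{lifted}, (\ref{lifted4}), we already have ${\cal B}(A)/F\subseteq {\cal B}(A/F)$, so it suffices to prove the reverse inclusion. Given $f\in {\cal B}(A/F)$, Lemma \ref{lifted}, (\ref{lifted3}), supplies $x\in A$ with $f=x/F$ and $x\vee \neg \, x\in F$; the latter forces $[x\vee \neg \, x)\subseteq F$. Applying (ii) to this $x$ yields $e\in {\cal B}(A)$ with $e\leftrightarrow x\in [x\vee \neg \, x)\subseteq F$, whence $e/F=x/F=f$ and $f\in {\cal B}(A)/F$. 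Thus $F$ has BLP, and as $F$ was arbitrary, $A$ has BLP.

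I expect the only delicate point to be the reduction used in the converse: one must read off from Lemma \ref{lifted}, (\ref{lifted3}), that any Boolean element of $A/F$ has a representative $x$ satisfying $x\vee \neg \, x\in F$, so that $[x\vee \neg \, x)\subseteq F$ and the lift guaranteed by (ii) modulo the minimal filter $[x\vee \neg \, x)$ automatically survives modulo the larger filter $F$. Everything else is direct bookkeeping with the cited lemmas.
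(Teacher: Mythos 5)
Your proof is correct and follows essentially the same route as the paper's: the equivalence (i)$\Leftrightarrow$(ii) is pivoted on the filter $[x\vee \neg\, x)$ exactly as the paper does, using Lemma \ref{olema} for the forward direction and Lemma \ref{lifted}, (\ref{lifted3}) and (\ref{lifted4}), for the converse, while (ii)$\Leftrightarrow$(iii) via Lemma \ref{oaltalema} and (ii)$\Leftrightarrow$(iv) by the definition of $S(A)$ match the paper verbatim.
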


\begin{proof} (\ref{caractlrblp1})$\Rightarrow $(\ref{caractlrblp2}): Let $x\in A$. By the hypothesis of this implication, the filter $[x\vee \neg \, x)$ of $A$ has BLP, which means that ${\cal B}(A/[x\vee \neg \, x))={\cal B}(A)/[x\vee \neg \, x)$. By Lemma \ref{olema}, $x/[x\vee \neg \, x)\in {\cal B}(A/[x\vee \neg \, x))$, thus $x/[x\vee \neg \, x)\in {\cal B}(A)/[x\vee \neg \, x)$, so there exists $e\in {\cal B}(A)$ such that $x/[x\vee \neg \, x)=[x\vee \neg \, x)$, that is $e\leftrightarrow x\in [x\vee \neg \, x)$.

\noindent (\ref{caractlrblp2})$\Rightarrow $(\ref{caractlrblp1}): Let $F$ be an arbitrary filter of $A$ and $x\in A$, such that $x/F\in {\cal B}(A/F)$. Then, by Lemma \ref{lifted}, (\ref{lifted3}), $x\vee \neg \, x\in F$, thus $[x\vee \neg \, x)\subseteq F$. By the hypothesis of this implication, there exists $e\in {\cal B}(A)$ such that $e\leftrightarrow x\in [x\vee \neg \, x)\subseteq F$, hence $e\leftrightarrow x\in F$, that is $e/F=x/F$, therefore $x/F\in {\cal B}(A)/F$. So ${\cal B}(A/F)\subseteq {\cal B}(A)/F$, hence ${\cal B}(A/F)={\cal B}(A)/F$ by Lemma \ref{lifted}, (\ref{lifted4}), which means that $F$ has BLP. Thus $A$ has BLP.

\noindent (\ref{caractlrblp2})$\Leftrightarrow $(\ref{caractlrblp3}): By Lemma \ref{oaltalema}.

\noindent (\ref{caractlrblp2})$\Leftrightarrow $(\ref{caractlrblp4}): By the very definition of $S(A)$.\end{proof}

Condition (\ref{caractlrblp3}) in Proposition \ref{caractlrblp} is somewhat similar to condition $(4)$ from \cite[Proposition $1.1$]{nic}.

\begin{note} If we denote, for each $x\in A$, $S(x)=\{e\in {\cal B}(A)\ |\ e\in [x)\mbox{ and }\neg \, e\in [\neg \, x)\}$, then Proposition \ref{caractlrblp} shows that: $A$ has BLP iff $S(x)\neq \emptyset $ for all $x\in A$. Two open problems which are worth looking into are the following:

\begin{enumerate}
\item characterize the residuated lattices $A$ with $|S(x)|=1$ for all $x\in A$;
\item given an $n\in \N ^*$, characterize the residuated lattices $A$ with $|S(x)|\in \overline{1,n}$ for all $x\in A$.
\end{enumerate}\end{note}

According to the proposition above, a residuated lattice $A$ has BLP iff $S(A)=A$. Then, by Remarks \ref{boolblp} and \ref{chainblp}:

\begin{corollary}
\begin{enumerate}
\item\label{saea1} If (the bounded lattice reduct of) $A$ is a Boolean algebra, then $S(A)=A$.
\item\label{saea2} If $A$ is linearly ordered, then $S(A)=A$.
\end{enumerate}\label{saea}\end{corollary}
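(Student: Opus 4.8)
The plan is to read off both statements directly from the characterization just proved, namely the equivalence (\ref{caractlrblp1})$\Leftrightarrow$(\ref{caractlrblp4}) in Proposition \ref{caractlrblp}, which asserts that a residuated lattice $A$ has BLP if and only if $S(A)=A$. So in each case it suffices to exhibit that the hypothesis forces $A$ to have BLP, and then the conclusion $S(A)=A$ is automatic.

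For part (\ref{saea1}), I would invoke Remark \ref{boolblp}: if the bounded lattice reduct of $A$ is a Boolean algebra, then $A$ (viewed as a residuated lattice in the canonical way, with $\odot =\wedge $) has BLP. Applying the equivalence (\ref{caractlrblp1})$\Leftrightarrow$(\ref{caractlrblp4}) of Proposition \ref{caractlrblp} to this fact yields $S(A)=A$. For part (\ref{saea2}), I would argue identically using Remark \ref{chainblp} in place of Remark \ref{boolblp}: any linearly ordered residuated lattice has BLP, whence, again by Proposition \ref{caractlrblp}, $S(A)=A$.

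There is essentially no obstacle here, since the substantive content has already been established: the arithmetic characterization of BLP in terms of the set $S(A)$ is exactly Proposition \ref{caractlrblp}, and the two classes of residuated lattices named in the corollary have each already been shown to satisfy BLP. The corollary is therefore a purely formal repackaging of those earlier results. If one preferred a self-contained verification not routing through the BLP terminology, one could instead check condition (\ref{caractlrblp3}) of Proposition \ref{caractlrblp} directly: in the Boolean case every $x\in A$ is itself Boolean with $\neg \, \neg \, x=x$, so taking $e=x$ gives trivially $e\in [x)$ and $\neg \, e\in [\neg \, x)$; in the linearly ordered case ${\cal B}(A)=\{0,1\}$, and for each $x$ one of $x,\neg \, x$ equals the larger element, so $\{0,1\}$ already supplies a witness $e$. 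But since the excerpt signals that the intended route is through Remarks \ref{boolblp} and \ref{chainblp}, the short proof above is the natural one.
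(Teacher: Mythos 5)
Your main proof is correct and is exactly the paper's: the corollary appears there as an immediate consequence of Proposition \ref{caractlrblp} (BLP iff $S(A)=A$) combined with Remarks \ref{boolblp} and \ref{chainblp}, precisely the route you take. One caution about your optional self-contained aside: in the linearly ordered case the claim that ``one of $x,\neg \, x$ equals the larger element'' does not hold (in the standard \L ukasiewicz chain $x=\neg \, x=1/2$, and neither is $0$ or $1$); what condition (\ref{caractlrblp3}) actually requires when ${\cal B}(A)=\{0,1\}$ is that $x$ or $\neg \, x$ be nilpotent (Remark \ref{booltriv}), which is not immediate arithmetically and is best recovered from the quotient argument of Remark \ref{chainblp} itself.
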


Let us give an example of a residuated lattice $A$ in which $S(A)\neq A$. In order to calculate easily the set $S(A)$ in this example, let us notice that:

\begin{remark} If ${\cal B}(A)=\{0,1\}$, then, according to Remark \ref{rsa} and Lemma \ref{aritmlr}, (\ref{aritmlr3}), $S(A)$ is formed of the elements $x\in A$ which satisfy one of the following conditions:

\begin{itemize}
\item $0\in [x)$ and $1\in [\neg \, x)$, that is $0\in [x)$, which means that $x^n=0$ for some $n\in \N ^*$;
\item $1\in [x)$ and $0\in [\neg \, x)$, that is $0\in [\neg \, x)$, which means that $(\neg \, x)^n=0$ for some $n\in \N ^*$.
\end{itemize}

Thus, when ${\cal B}(A)=\{0,1\}$, it follows that $S(A)$ contains exactly the elements $x\in A$ such that either $x$ or $\neg \, x$ is nilpotent, that is: $S(A)=N(A)\cup \{x\in A\ |\ \neg \, x\in N(A)\}$.\label{booltriv}\end{remark}

\begin{remark}
If ${\cal B}(A)=\{0,1\}$ and $\odot =\wedge $ in $A$, then all the elements of $A$ are idempotent, hence $N(A)=\{0\}$, thus, by Remark \ref{booltriv}, $S(A)=\{0\}\cup \{x\in A\ |\ \neg \, x=0\}=\{0\}\cup D(A)$.\label{fpartic}\end{remark}

\begin{example} Let $A$ be the residuated lattice in Example \ref{exfarablp}. In this residuated lattice, ${\cal B}(A)=\{0,1\}$ and $\odot =\wedge $, hence, by Remark \ref{fpartic}, $S(A)=\{0\}\cup D(A)=\{0\}\cup \{x\in A\ |\ \neg \, x=0\}=\{0\}\cup \{x\in A\ |\ x\rightarrow 0=0\}=\{0,c,1\}\subsetneq A$.\label{exsanua}\end{example}

\begin{remark} If ${\rm Rad}(A)=A\setminus \{0\}$, then $A$ is local. This is obvious, because the previous equality says that $0\notin {\rm Rad}(A)$ and ${\rm Rad}(A)\cup \{0\}=A$, and, since ${\rm Rad}(A)$ is a filter of $A$, this means that ${\rm Rad}(A)$ is a proper filter and the only filter that strictly includes it is $A$, therefore ${\rm Rad}(A)$ is a maximal filter of $A$, hence $A$ is local.

This actually also follows from the equivalence between (\ref{caractloc1}) and (\ref{caractloc2}) in Proposition \ref{caractloc}, which can be given a proof similar to the above.\label{evident}\end{remark}

\begin{remark} If $A$ is non—trivial, then, by Lemma \ref{aritmlr}, (\ref{aritmlr3}), $\neg \, 0=1\neq 0$, thus $0\notin D(A)$.\label{feasy}\end{remark}

\begin{example} If $A$ is a non--trivial chain, organized as a residuated lattice as in Example \ref{exutil}, then:

\begin{itemize}
\item by Remark \ref{feasy}, $0\notin D(A)$;

\item $S(A)=A$, by Corollary \ref{saea}, (\ref{saea2});
\item ${\cal B}(A)=\{0,1\}$ and $\odot =\wedge $ in $A$, hence $S(A)=\{0\}\cup D(A)$ by Remark \ref{fpartic}.
\end{itemize}

Therefore $D(A)=A\setminus \{0\}$. Also, by Lemma \ref{lemaradsid}, (\ref{dense}), and the fact that ${\rm Rad}(A)$ is a proper filter of $A$, it follows that ${\rm Rad}(A)=A\setminus \{0\}$, thus $A$ is local by Remark \ref{evident}.\label{easy}\end{example}

\begin{proposition}
\begin{enumerate}
\item\label{colectare1} ${\cal B}(A)\subseteq S(A)$.

\item\label{colectare2} If $x\in A$ such that $\neg \, x\in S(A)$, then $x\in S(A)$.

\item\label{colectare3} If $x\in A$ such that $x^n\in S(A)$ for some $n\in \N ^*$, then $x\in S(A)$.

\item\label{colectare4} $D(A)\subseteq S(A)$.

\item\label{colectare5} ${\rm Rad}(A)\subseteq S(A)$.

\item\label{colectare6} For any filter $F$ of $A$, $S(A)/F\subseteq S(A/F)$.\end{enumerate}\label{colectare}\end{proposition}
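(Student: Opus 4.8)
The plan is to carry out all six parts through the arithmetic reformulation of $S(A)$ in Remark~\ref{rsa}: an element $x$ lies in $S(A)$ precisely when there is a Boolean element $e$ with $e\in [x)$ and $\neg \, e\in [\neg \, x)$. Since $[a)=\{y\ |\ (\exists \, n\in \N ^*)\, a^n\leq y\}$, each of these two memberships unfolds into an inequality $x^n\leq e$, respectively $(\neg \, x)^m\leq \neg \, e$, and it is these inequalities I will produce or transport in every case. Throughout I will also freely use the facts, recorded in the preliminaries, that $a\leq b$ iff $[b)\subseteq [a)$, that $[a^n)=[a)$, and that $[a)=A$ iff $a\in N(A)$.

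For (\ref{colectare1}) I take the witness $e=x$ itself: it is Boolean, and $x\in [x)$, $\neg \, x\in [\neg \, x)$ hold trivially. For (\ref{colectare5}) I take $e=1$; then $1\in [x)$ is automatic, and the remaining requirement $\neg \, 1=0\in [\neg \, x)$ is exactly $[\neg \, x)=A$, i.e.\ $\neg \, x\in N(A)$, which is Corollary~\ref{radnegnil}. Part (\ref{colectare4}) I will deduce from (\ref{colectare5}) via $D(A)\subseteq {\rm Rad}(A)$ (Lemma~\ref{lemaradsid}, (\ref{dense})); alternatively it drops out directly, since $x\in D(A)$ means $\neg \, x=0$, forcing $[\neg \, x)=[0)=A$, after which $e=1$ again works.

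The parts carrying the genuine content are (\ref{colectare2}) and (\ref{colectare3}), and I expect the bookkeeping there to be the main obstacle, since one must track the direction in which the filter inclusions run under negation. For (\ref{colectare2}), starting from a witness $e$ for $\neg \, x\in S(A)$ (so $(\neg \, x)^n\leq e$ and $(\neg \, \neg \, x)^m\leq \neg \, e$), I propose the witness $\neg \, e$ for $x$: its negation is $\neg \, \neg \, e=e$, the requirement $e\in [\neg \, x)$ is immediate, and $\neg \, e\in [x)$ follows from $x\leq \neg \, \neg \, x$ (Lemma~\ref{aritmlr}, (\ref{aritmlr10})) raised to the $m$-th power using monotonicity of $\odot $ (Lemma~\ref{aritmlr}, (\ref{aritmlr1})), which gives $x^m\leq (\neg \, \neg \, x)^m\leq \neg \, e$. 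For (\ref{colectare3}) I keep the same witness $e$ for $x^n\in S(A)$: the left condition is unchanged because $[x^n)=[x)$, and for the right condition Lemma~\ref{uncalcul} gives $(\neg \, x)^n\leq \neg \, x^n$, hence $[\neg \, x^n)\subseteq [(\neg \, x)^n)=[\neg \, x)$, so $\neg \, e$ remains in the enlarged filter $[\neg \, x)$.

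Finally, (\ref{colectare6}) is a morphism argument. The canonical surjection $p_F$ is a residuated lattice morphism, so it preserves $\odot $, $\vee $, $\neg $ and the order, and it carries ${\cal B}(A)$ into ${\cal B}(A/F)$ by Lemma~\ref{lifted}, (\ref{lifted4}). Hence, if $e\in {\cal B}(A)$ witnesses $x\in S(A)$, then $e/F\in {\cal B}(A/F)$, and applying $p_F$ to $x^n\leq e$ and to $(\neg \, x)^m\leq \neg \, e$ yields $(x/F)^n\leq e/F$ and $(\neg \, (x/F))^m\leq \neg \, (e/F)$ in $A/F$; these say exactly that $e/F$ witnesses $x/F\in S(A/F)$, giving $S(A)/F\subseteq S(A/F)$.
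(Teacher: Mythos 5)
Your proposal is correct and follows essentially the same route as the paper: both work through the reformulation of $S(A)$ in Remark~\ref{rsa}, use the witness $\neg\, e$ together with $x\leq \neg\,\neg\, x$ for (\ref{colectare2}), Lemma~\ref{uncalcul} and $[x^n)=[x)$ for (\ref{colectare3}), and push the witnessing inequalities through $p_F$ via Lemma~\ref{lifted}, (\ref{lifted4}), for (\ref{colectare6}). The only deviation is cosmetic: for (\ref{colectare4}) and (\ref{colectare5}) you exhibit the direct witness $e=1$ (using Corollary~\ref{radnegnil}), where the paper instead chains through its parts (\ref{colectare1}), (\ref{colectare3}) and (\ref{colectare2}) — both arguments rest on the same fact that elements of the radical have nilpotent negations.
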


\begin{proof} (\ref{colectare1}) For all $e\in {\cal B}(A)$, by Lemma \ref{aritmlr}, (\ref{aritmlr2}), we have: $e\leftrightarrow e=1\in [e\vee \neg \, e)$, therefore $e\in S(A)$.

\noindent (\ref{colectare2}) Let $x\in A$ such that $\neg \, x\in S(A)$. By Lemma \ref{aritmlr}, (\ref{aritmlr10}), $x\leq \neg \, \neg \, x$, thus $[\neg \, \neg \, x)\subseteq [x)$. According to Remark \ref{rsa}, $\neg \, x\in S(A)$ means that there exists $e\in {\cal B}(A)$ such that $e\in [\neg \, x)$ and $\neg \, e\in [\neg \, \neg \, x)\subseteq [x)$, thus $\neg \, e\in [x)$ and $e=\neg \, \neg \, e\in [\neg \, x)$, and $\neg \, e\in {\cal B}(A)$, thus $x\in S(A)$. We have applied Lemma \ref{aritmcbool}, (\ref{aritmcbool1}).

\noindent (\ref{colectare3}) Let $x\in A$ and $n\in \N ^*$, such that $x^n\in S(A)$. By Lemma \ref{uncalcul}, $(\neg \, x)^n\leq \neg \, x^n$, hence $[\neg \, x^n)\subseteq [(\neg \, x)^n)=[\neg \, x)$. Remark \ref{rsa} shows that there exists $e\in {\cal B}(A)$ such that $e\in [x^n)=[x)$ and $\neg \, e\in [\neg \, x^n)\subseteq [\neg \, x)$, thus $e\in [x)$ and $\neg \, e\in [\neg \, x)$, that is $x\in S(A)$.

\noindent (\ref{colectare4}) Let $x\in D(A)$, that is $x\in A$ with $\neg \, x=0$. By (\ref{colectare1}) and (\ref{colectare2}), $0\in {\cal B}(A)$, hence $0\in S(A)$, therefore $x\in S(A)$. This, actually, also follows from the next statement, (\ref{colectare5}), and Lemma \ref{lemaradsid}, (\ref{dense}).

\noindent (\ref{colectare5}) Let $x\in {\rm Rad}(A)$, and let us take $n=1$ in Lemma \ref{radunit}. We get that there exists $k\in \N ^*$ such that $(\neg \, x)^k=0$. But $0\in {\cal B}(A)\subseteq S(A)$ by (\ref{colectare1}), thus $(\neg \, x)^k\in S(A)$, hence $\neg \, x\in S(A)$, therefore $x\in S(A)$, as shown by (\ref{colectare3}) and (\ref{colectare2}).

\noindent (\ref{colectare6}) Let $F$ be a filter of $A$, and let us consider an arbitrary element of $S(A)/F$, that is let $x\in S(A)$ and let us consider the element $x/F\in S(A)/F$. By Remark \ref{rsa}, this means that $e\in [x)$ and $\neg \, e\in [\neg \, x)$ for some $e\in {\cal B}(A)$, so there exist $n,m\in \N ^*$ such that $x^n\leq e$ and $(\neg \, x)^m\leq \neg \, e$. Then $e/F\in {\cal B}(A)/F\subseteq {\cal B}(A/F)$, $(x/F)^n=x^n/F\leq e/F$ and $\neg \, (x/F)^m=(\neg \, x)^m/F\leq \neg \, e/F=\neg \, (e/F)$, therefore $x/F\in S(A/F)$, by Lemma \ref{lifted}, (\ref{lifted4}) and again Remark \ref{rsa}.\end{proof}

\begin{corollary} $A$ has BLP iff $A/F$ has BLP for every filter $F$ of $A$.\label{corimp}\end{corollary}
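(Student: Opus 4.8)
The plan is to leverage the arithmetic characterization of BLP in terms of the set $S(A)$ from Proposition \ref{caractlrblp}, together with the quotient containment $S(A)/F\subseteq S(A/F)$ established in Proposition \ref{colectare}, (\ref{colectare6}). With these two facts in hand, both implications become short.

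For the implication ``$A/F$ has BLP for every $F$'' $\Rightarrow$ ``$A$ has BLP'', I would simply specialize to the trivial filter: taking $F=\{1\}$, the canonical surjection $p_{\{1\}}:A\rightarrow A/\{1\}$ is a residuated lattice isomorphism (as recalled in the preliminaries and already used in the proof of Proposition \ref{filtblp}, (\ref{filtblp1})). Since BLP is clearly preserved under residuated lattice isomorphisms, the fact that $A/\{1\}$ has BLP forces $A$ itself to have BLP.

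For the converse, suppose $A$ has BLP. By Proposition \ref{caractlrblp}, (\ref{caractlrblp4}), this is equivalent to $S(A)=A$. Fix an arbitrary filter $F$ of $A$. Then $S(A)/F=A/F$ by surjectivity of $p_F$, and Proposition \ref{colectare}, (\ref{colectare6}), gives $A/F=S(A)/F\subseteq S(A/F)\subseteq A/F$, so $S(A/F)=A/F$. Applying Proposition \ref{caractlrblp}, (\ref{caractlrblp4}), once more, this time to the residuated lattice $A/F$, yields that $A/F$ has BLP, as desired.

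I do not anticipate any genuine obstacle: the substantive work has already been carried out in Proposition \ref{colectare}, (\ref{colectare6}), which transports membership in $S$ through the quotient map, and in the equivalence between BLP and the equality $S(A)=A$. The only point warranting a moment's care is the passage $S(A)/F=A/F$, which is immediate from $S(A)=A$ and the surjectivity of $p_F$, and the (harmless) observation that the characterization of Proposition \ref{caractlrblp} is being invoked for two different residuated lattices, namely $A$ and its quotient $A/F$.
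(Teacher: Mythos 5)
Your proof is correct and follows essentially the same route as the paper: the paper also derives the quotient direction from Proposition \ref{caractlrblp} combined with Proposition \ref{colectare}, (\ref{colectare6}), via the chain $A/F=S(A)/F\subseteq S(A/F)\subseteq A/F$, and handles the other direction by taking $F=\{1\}$. The only difference is cosmetic: you spell out the isomorphism $A\cong A/\{1\}$ and the invariance of BLP under isomorphism, which the paper leaves implicit.
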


\begin{proof} For the direct implication, notice that: by Proposition \ref{caractlrblp} and Proposition \ref{colectare}, (\ref{colectare6}), if $A$ has BLP, then $S(A)=A$, hence $A/F=S(A)/F\subseteq S(A/F)\subseteq A/F$, hence $S(A/F)=A/F$, thus $A/F$ has BLP. For the converse implication, just take $F=\{1\}$.\end{proof}

\begin{proposition}
For every filter $F$ of $A$, the following are equivalent:

\begin{enumerate}
\item\label{catdecaturi1} $A/F$ has BLP;
\item\label{catdecaturi2} for every filter $G$ of $A$ such that $F\subseteq G$, $A/G$ has BLP.\end{enumerate}\label{catdecaturi}\end{proposition}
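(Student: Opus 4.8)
The plan is to prove the two implications separately, with essentially all of the content residing in (\ref{catdecaturi1})$\Rightarrow$(\ref{catdecaturi2}); the reverse direction is an immediate specialization. For (\ref{catdecaturi2})$\Rightarrow$(\ref{catdecaturi1}), since $F\subseteq F$, I would simply take $G=F$ in the hypothesis of (\ref{catdecaturi2}) to read off that $A/F$ has BLP, and nothing more is needed.

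For (\ref{catdecaturi1})$\Rightarrow$(\ref{catdecaturi2}), the key idea is to apply Corollary \ref{corimp} not to $A$ itself but to the residuated lattice $A/F$. Assuming $A/F$ has BLP, Corollary \ref{corimp} read with $A/F$ in the role of $A$ yields that $(A/F)/H$ has BLP for \emph{every} filter $H$ of $A/F$. Now fix a filter $G$ of $A$ with $F\subseteq G$. By the filter correspondence recalled in the Preliminaries (the bijection $G\mapsto G/F$ between $\{G\in {\cal F}(A)\ |\ F\subseteq G\}$ and ${\cal F}(A/F)$), the set $G/F$ is a filter of $A/F$, so the previous sentence gives that $(A/F)/_{\textstyle (G/F)}$ has BLP. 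Finally, the Second Isomorphism Theorem for residuated lattices provides a residuated lattice isomorphism $A/G\cong (A/F)/_{\textstyle (G/F)}$, whence $A/G$ has BLP as well.

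The only step that deserves a word of justification, and which I expect to be the single mild obstacle, is the last one: that BLP is preserved under residuated lattice isomorphisms. This is routine bookkeeping rather than a real difficulty. A residuated lattice isomorphism carries filters to filters bijectively, is compatible with the passage to quotients, and, via the functor ${\cal B}$, induces Boolean isomorphisms between the corresponding Boolean centers; consequently the surjectivity of each map ${\cal B}(p_H)$ on one side transfers to the surjectivity of the analogous map on the other, which is exactly what BLP requires. Everything else in the argument is a direct invocation of Corollary \ref{corimp}, the filter correspondence, and the Second Isomorphism Theorem, so once this transfer is noted the proof is complete.
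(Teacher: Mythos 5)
Your proposal is correct and follows essentially the same route as the paper: the backward implication by specializing $G=F$, and the forward one by applying Corollary \ref{corimp} to $A/F$, using the filter correspondence $G\mapsto G/F$ and the Second Isomorphism Theorem to identify $(A/F)/_{\textstyle (G/F)}$ with $A/G$. Your explicit remark that BLP is invariant under residuated lattice isomorphisms is a point the paper leaves tacit, but it is the same argument.
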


\begin{proof} (\ref{catdecaturi2})$\Rightarrow $(\ref{catdecaturi1}): Take $F=G$ in (\ref{catdecaturi2}).

\noindent (\ref{catdecaturi1})$\Rightarrow $(\ref{catdecaturi2}): Let $G$ be a filter of $A$ such that $F\subseteq G$. By the hypothesis of this implication, $A/F$ has BLP. We know that $G/F$ is a filter of $A/F$. Thus, by Corollary \ref{corimp}, it follows that $(A/F)/_{\textstyle (G/F)}$ has BLP. But the residuated lattice $(A/F)/_{\textstyle (G/F)}$ is isomorphic to $A/G$. Hence $A/G$ has BLP.\end{proof} 

\begin{corollary}
Any hyperarchimedean residuated lattice has BLP, but the converse is not true.\label{hypblp}\end{corollary}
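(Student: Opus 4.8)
The plan is to prove the forward implication directly from the arithmetic characterization of BLP developed in Proposition \ref{caractlrblp}, together with the closure properties of the set $S(A)$ collected in Proposition \ref{colectare}. Recall that $A$ has BLP iff $S(A)=A$, and that $S(A)$ contains $\mathcal{B}(A)$ (Proposition \ref{colectare}, (\ref{colectare1})) and is closed under ``taking powers backwards'', i.e. $x^n\in S(A)$ for some $n\in\N^*$ forces $x\in S(A)$ (Proposition \ref{colectare}, (\ref{colectare3})). These two facts are exactly what a hyperarchimedean hypothesis is designed to exploit.

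So first I would take $A$ hyperarchimedean and an arbitrary $x\in A$. By definition of hyperarchimedean, there is some $n\in\N^*$ with $x^n\in\mathcal{B}(A)$. Since $\mathcal{B}(A)\subseteq S(A)$ by Proposition \ref{colectare}, (\ref{colectare1}), we get $x^n\in S(A)$, and then Proposition \ref{colectare}, (\ref{colectare3}), yields $x\in S(A)$. As $x$ was arbitrary, $S(A)=A$, so $A$ has BLP by Proposition \ref{caractlrblp}. This half is essentially immediate once the right earlier results are cited in the right order; there is no real calculation to perform.

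For the converse I would exhibit a residuated lattice that has BLP but is not hyperarchimedean. The three--element chain $A=\{0,a,1\}$ of Example \ref{coolex}, organized as in Example \ref{exutil} with $\odot=\wedge=\min$, serves perfectly. It has BLP, either as verified directly in Example \ref{coolex} or because every linearly ordered residuated lattice has BLP (Remark \ref{chainblp}). On the other hand, since $\odot=\wedge$, every element is idempotent, so $a^n=a$ for all $n\in\N^*$; as ${\cal B}(A)=\{0,1\}$ and $a\notin\{0,1\}$, the element $a$ is not archimedean, hence $A$ is not hyperarchimedean. This single example simultaneously lies in the class with BLP and outside the hyperarchimedean class, establishing that the inclusion is strict.

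The only point requiring any care is the converse, namely selecting a counterexample for which both properties can be read off without new work; the chosen chain does this cleanly because $\odot=\wedge$ trivializes the power computation while the chain condition hands us BLP for free. The forward implication I expect to be wholly routine, being a two--line consequence of the machinery already in place, so I anticipate no genuine obstacle in the proof.
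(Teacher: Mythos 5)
Your proof is correct and follows essentially the same route as the paper: the forward implication is the identical two-step argument via Proposition \ref{colectare}, (\ref{colectare1}) and (\ref{colectare3}), and Proposition \ref{caractlrblp}, and your counterexample (the three-element chain of Example \ref{coolex}) is a special case of the paper's counterexample (any chain with at least three elements organized as in Example \ref{exutil}), refuted as hyperarchimedean by the same idempotency argument.
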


\begin{proof} Let $A$ be a hyperarchimedean residuated lattice and $x\in A$, arbitrary. Then there exists an $n\in \N ^*$ such that $x^n\in {\cal B}(A)$, so $x^n\in S(A)$, hence $x\in S(A)$, by Proposition \ref{colectare}, (\ref{colectare1}) and (\ref{colectare3}). Thus $S(A)=A$, which means that $A$ has BLP, according to Proposition \ref{caractlrblp}.

Now let $A$ be a chain with at least three elements, organized as a residuated lattice as in Example \ref{exutil}. Then $A$ has BLP by Remark \ref{chainblp}, ${\cal B}(A)=\{0,1\}\neq A$ and $\odot =\wedge $, thus all elements of $A$ are idempotent, hence none of the elements of $A\setminus {\cal B}(A)=A\setminus \{0,1\}\neq \emptyset $ is archimedean, therefore $A$ is not hyperarchimedean.\end{proof}

\begin{remark}
Let us point out the property which we have used at the end of the proof of the previous corollary: if $\odot =\wedge $ in the residuated lattice $A$, then all elements of $A$ are idempotent, hence the set of the archimedean elements of $A$ coincides with ${\cal B}(A)$, therefore: $A$ is hyperarchimedean iff ${\cal B}(A)=A$ iff (the bounded lattice reduct of) $A$ is a Boolean algebra. Thus: hyperarchimedean residuated lattices with $\odot =\wedge $ are Boolean algebras.\label{aplicabila}\end{remark}

\begin{remark}
Remark \ref{aplicabila} could have been used to disprove the converse implication in the previous corollary, by simply applying Remark \ref{chainblp} and the fact that the only linearly ordered Boolean algebras are the one--element chain and the two--element chain, in order to conclude that every bounded chain with at least three elements, organized as a residuated lattice as in Example \ref{exutil}, has BLP and is not hyperarchimedean.
 
Moreover, Remark \ref{aplicabila} shows that, if a residuated lattice with BLP has $\odot =\wedge $ and it is not a Boolean algebra, then it is not hyperarchimedean.\end{remark}

\begin{proposition}
If $\odot =\wedge $ in $A$, then $S(A)=\{x\in A\ |\ \neg \, x\in {\cal B}(A)\}$.\label{sinallid}\end{proposition}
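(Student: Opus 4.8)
The plan is to reduce the defining condition of $S(A)$ to a purely order-theoretic statement and then read off the claimed equality by two short inclusions.

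First I would invoke Remark \ref{rsa}, which gives $S(A)=\{x\in A\mid (\exists \, e\in {\cal B}(A))\, e\in [x)\text{ and }\neg \, e\in [\neg \, x)\}$. Since $\odot =\wedge $ in $A$, every element of $A$ is idempotent, so each principal filter is simply an up-set: $[a)=\{y\in A\mid a\leq y\}$ for all $a\in A$. Applying this to $a=x$ and to $a=\neg \, x$, the condition $x\in S(A)$ becomes the existence of some $e\in {\cal B}(A)$ with $x\leq e$ and $\neg \, x\leq \neg \, e$. This is the key simplification; everything after it is a couple of lines.

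For the inclusion $S(A)\subseteq \{x\in A\mid \neg \, x\in {\cal B}(A)\}$, suppose $e\in {\cal B}(A)$ witnesses $x\in S(A)$ in the sense above. From $x\leq e$ and the antitonicity of negation (Lemma \ref{aritmlr}, (\ref{aritmlr6})) I get $\neg \, e\leq \neg \, x$; together with the witnessing inequality $\neg \, x\leq \neg \, e$ this forces $\neg \, x=\neg \, e$. As $e\in {\cal B}(A)$ and ${\cal B}(A)$ is closed under negation (Lemma \ref{aritmcbool}, (\ref{aritmcbool1})), we have $\neg \, e\in {\cal B}(A)$, whence $\neg \, x\in {\cal B}(A)$.

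For the reverse inclusion I would exhibit an explicit witness. Given $\neg \, x\in {\cal B}(A)$, set $e=\neg \, \neg \, x$, which lies in ${\cal B}(A)$ again because the Boolean center is closed under negation. Then $x\leq \neg \, \neg \, x=e$ and $\neg \, e=\neg \, \neg \, \neg \, x=\neg \, x$, both by Lemma \ref{aritmlr}, (\ref{aritmlr10}), so in particular $\neg \, x\leq \neg \, e$ holds trivially. Hence $e$ witnesses $x\in S(A)$. I do not anticipate a genuine obstacle: once the passage to up-sets via idempotency is made, both directions are immediate, and the only point deserving a moment's care is the correct choice of witness $e=\neg \, \neg \, x$ for the reverse inclusion (rather than $x$ itself, which need not be Boolean).
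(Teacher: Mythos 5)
Your proof is correct and follows essentially the same route as the paper's: both reduce via Remark \ref{rsa} and idempotency to the condition $x\leq e$ and $\neg\, x\leq \neg\, e$, derive $\neg\, x=\neg\, e\in {\cal B}(A)$ from antitonicity of negation for one inclusion, and use the witness $e=\neg\, \neg\, x$ together with Lemma \ref{aritmlr}, (\ref{aritmlr10}), for the other. Nothing to correct.
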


\begin{proof} Assume that $A$ has $\odot =\wedge $. Then all the elements of $A$ are idempotent, so, for every $a\in A$, $[a)=\{b\in A\ |\ a\leq b\}$. Then, according to Remark \ref{rsa}, $S(A)=\{x\in A\ |\ (\exists \, e\in {\cal B}(A))\, e\in [x)\mbox{ and }\neg \, e\in [\neg \, x)\}=\{x\in A\ |\ (\exists \, e\in {\cal B}(A))\, x\leq e\mbox{ and }\neg \, x\leq \neg \, e\}$. But $x\leq e$ implies  $\neg \, e\leq \neg \, x$ by Lemma \ref{aritmlr}, (\ref{aritmlr6}), which means that $x\leq e$ and $\neg \, x\leq \neg \, e$ imply $\neg \, x=\neg \, e\in {\cal B}(A)$ (see Lemma \ref{aritmcbool}, (\ref{aritmcbool1})). By Lemma \ref{aritmcbool}, (\ref{aritmcbool1}) and Lemma \ref{aritmlr}, (\ref{aritmlr10}), if an $x\in A$ has $\neg \, x\in {\cal B}(A)$, then $\neg \, \neg \, x\in {\cal B}(A)$ and we have: $\neg \, \neg \, x\in [x)$ and $\neg \, \neg \, \neg \, x\in [\neg \, x)$. Therefore $S(A)=\{x\in A\ |\ \neg \, x\in {\cal B}(A)\}$.\end{proof}

\begin{corollary}

If $\odot =\wedge $ in $A$, then $\{\neg \, x\ |\ x\in S(A)\}={\cal B}(A)$.\label{negsb}\end{corollary}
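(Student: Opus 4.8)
The plan is to derive this directly from Proposition \ref{sinallid}, which already computes $S(A)=\{x\in A\ |\ \neg \, x\in {\cal B}(A)\}$ under the hypothesis $\odot =\wedge $. Granting that description, the corollary amounts to showing that the negation map carries the set $\{x\in A\ |\ \neg \, x\in {\cal B}(A)\}$ onto ${\cal B}(A)$, so the whole argument reduces to two short inclusions.

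For the inclusion $\{\neg \, x\ |\ x\in S(A)\}\subseteq {\cal B}(A)$, I would argue as follows: if $x\in S(A)$, then by Proposition \ref{sinallid} we have $\neg \, x\in {\cal B}(A)$, which is exactly the assertion that every element of the image set is Boolean. This direction is immediate and requires nothing beyond the characterization of $S(A)$.

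For the reverse inclusion ${\cal B}(A)\subseteq \{\neg \, x\ |\ x\in S(A)\}$, the idea is to exhibit, for each $e\in {\cal B}(A)$, a preimage in $S(A)$ under negation. The natural candidate is $x=\neg \, e$. Since $e$ is Boolean, Lemma \ref{aritmcbool}, (\ref{aritmcbool1}), gives $\neg \, \neg \, e=e\in {\cal B}(A)$; reading this through Proposition \ref{sinallid} (the element $\neg \, e$ has Boolean negation $\neg \, \neg \, e=e$) shows $\neg \, e\in S(A)$. Then $\neg \, (\neg \, e)=\neg \, \neg \, e=e$, so $e$ is indeed of the form $\neg \, x$ with $x=\neg \, e\in S(A)$, completing this inclusion.

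There is no real obstacle here; the only point that must be used carefully is the involutivity of negation on the Boolean center, $\neg \, \neg \, e=e$ for $e\in {\cal B}(A)$, which is precisely what makes negation a bijection on the relevant sets. Combining the two inclusions yields $\{\neg \, x\ |\ x\in S(A)\}={\cal B}(A)$.
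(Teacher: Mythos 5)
Your proof is correct and follows essentially the same route as the paper: both directions rest on the characterization $S(A)=\{x\in A\ |\ \neg\, x\in {\cal B}(A)\}$ from Proposition \ref{sinallid}, with the reverse inclusion obtained by taking $x=\neg\, e$ and invoking $\neg\,\neg\, e=e$ from Lemma \ref{aritmcbool}, (\ref{aritmcbool1}). The paper compresses this into a single sentence, but the content is identical to your two-inclusion argument.
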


\begin{proof} Lemma \ref{aritmcbool}, (\ref{aritmcbool1}), and Proposition \ref{sinallid} ensure us that, when $\odot =\wedge $ in $A$, we have: for every $x\in S(A)$, $\neg \, x\in {\cal B}(A)$, and, for every $y\in {\cal B}(A)$, $y=\neg \, \neg \, y=\neg \, x$, where $x=\neg \, y\in S(A)$.\end{proof}

\begin{corollary}
If $A$ is involutive and has $\odot =\wedge $, then $S(A)={\cal B}(A)$. In other words, if all elements of $A$ are both regular and idempotent, then $S(A)={\cal B}(A)$.\label{lovedata}\end{corollary}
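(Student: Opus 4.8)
The plan is to prove the two inclusions $\mathcal{B}(A)\subseteq S(A)$ and $S(A)\subseteq \mathcal{B}(A)$ separately, exploiting the characterization of $S(A)$ already available under the hypothesis $\odot =\wedge $. The first inclusion is free: Proposition \ref{colectare}, (\ref{colectare1}), gives ${\cal B}(A)\subseteq S(A)$ in any residuated lattice, with no assumptions needed. So the entire content of the statement lies in the reverse inclusion $S(A)\subseteq {\cal B}(A)$, and this is where the involutivity hypothesis must be used.

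For the reverse inclusion, I would start from Proposition \ref{sinallid}, which tells us that, since $\odot =\wedge $ in $A$, we have $S(A)=\{x\in A\ |\ \neg \, x\in {\cal B}(A)\}$. Take any $x\in S(A)$; then $\neg \, x\in {\cal B}(A)$. The key observation is that the Boolean center is closed under negation: by Lemma \ref{aritmcbool}, (\ref{aritmcbool1}), the complement of a Boolean element is again Boolean, so $\neg \, \neg \, x\in {\cal B}(A)$. Now I bring in the assumption that $A$ is involutive, which says exactly that $x=\neg \, \neg \, x$ for every $x\in A$. Combining these two facts yields $x=\neg \, \neg \, x\in {\cal B}(A)$, which is precisely $x\in {\cal B}(A)$. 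Hence $S(A)\subseteq {\cal B}(A)$, and together with the first inclusion this gives $S(A)={\cal B}(A)$.

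I anticipate no genuine obstacle here; the argument is a short combination of previously established results, the only substantive step being the recognition that involutivity lets one pass from ``$\neg \, x$ is Boolean'' to ``$x$ is Boolean'' via double negation. As an alternative route, one could instead invoke Corollary \ref{negsb}, which states $\{\neg \, x\ |\ x\in S(A)\}={\cal B}(A)$ when $\odot =\wedge $, and then apply the negation map once more: since $A$ is involutive, $\neg \, $ is an involution on $A$, and it restricts to a bijection on ${\cal B}(A)$ by Lemma \ref{aritmcbool}, (\ref{aritmcbool1}); applying $\neg \, $ to both sides of the equality in Corollary \ref{negsb} and using $\neg \, \neg \, =\mathrm{id}$ collapses the left-hand side to $S(A)$ and the right-hand side to $\neg \, ({\cal B}(A))={\cal B}(A)$, giving the same conclusion. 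Either formulation is equally direct; I would present the first, as it keeps the double-negation step fully explicit.
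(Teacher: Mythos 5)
Your proof is correct and follows essentially the same route as the paper's own argument: both obtain ${\cal B}(A)\subseteq S(A)$ from Proposition \ref{colectare}, (\ref{colectare1}), and both get the reverse inclusion by applying Proposition \ref{sinallid} to an $x\in S(A)$, then using Lemma \ref{aritmcbool}, (\ref{aritmcbool1}) together with involutivity to conclude $x=\neg\,\neg\,x\in {\cal B}(A)$. Your alternative via Corollary \ref{negsb} is also valid but adds nothing beyond the first argument.
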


\begin{proof} By Proposition \ref{colectare}, (\ref{colectare1}), ${\cal B}(A)\subseteq S(A)$. Now assume that $A$ is involutive and has $\odot =\wedge $, and let $x\in S(A)$. Then $\neg \, x\in {\cal B}(A)$ by Proposition \ref{sinallid}, thus $x=\neg \, \neg \, x\in {\cal B}(A)$ by Lemma \ref{aritmcbool}, (\ref{aritmcbool1}). Hence $S(A)\subseteq {\cal B}(A)$, thus $S(A)={\cal B}(A)$.\end{proof}

\begin{corollary}
If $A$ is involutive and has $\odot =\wedge $, then: $A$ has BLP iff (the underlying bounded lattice of) $A$ is a Boolean algebra iff $A$ is hyperarchimedean.\label{liketroy}\end{corollary}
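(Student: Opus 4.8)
The plan is to deduce this chain of equivalences directly from results already established for the case $\odot =\wedge$, sharpened here by the extra hypothesis of involutivity, so that essentially no fresh computation is required. Assume throughout that $A$ is involutive and satisfies $\odot =\wedge$.

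First I would translate the BLP into the arithmetic condition of Proposition \ref{caractlrblp}: $A$ has BLP if and only if $S(A)=A$. The crucial input is then Corollary \ref{lovedata}, which under exactly these two hypotheses (all elements both regular and idempotent) gives $S(A)={\cal B}(A)$. Combining the two immediately yields that $A$ has BLP if and only if ${\cal B}(A)=A$.

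It remains to identify the condition ${\cal B}(A)=A$ with the other two statements. Since $\odot =\wedge$, Remark \ref{aplicabila} already records that ${\cal B}(A)=A$ is equivalent to the bounded lattice reduct of $A$ being a Boolean algebra, and equivalent to $A$ being hyperarchimedean: when $\odot =\wedge$ all elements are idempotent, so the archimedean elements are precisely those of ${\cal B}(A)$. Splicing these equivalences onto the one obtained above gives the full cycle, namely $A$ has BLP $\Leftrightarrow {\cal B}(A)=A\Leftrightarrow$ the underlying bounded lattice is a Boolean algebra $\Leftrightarrow A$ is hyperarchimedean.

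There is no genuine obstacle at this stage: the substantive step, the reduction $S(A)={\cal B}(A)$, has already been carried out in Corollary \ref{lovedata}, where involutivity is what allows one to pass from $\neg\,x\in{\cal B}(A)$ back to $x=\neg\,\neg\,x\in{\cal B}(A)$, and the remaining identifications are exactly Remark \ref{aplicabila}. The only point worth flagging is that involutivity is indispensable precisely for the inclusion $S(A)\subseteq{\cal B}(A)$: without it one retains only ${\cal B}(A)\subseteq S(A)$ from Proposition \ref{colectare}, (\ref{colectare1}), and the equivalence with being a Boolean algebra can fail, as a non-Boolean chain with $\odot =\wedge$ (which is not involutive) has BLP yet ${\cal B}(A)=\{0,1\}\neq A$.
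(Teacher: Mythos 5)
Your proof is correct and follows essentially the same route as the paper: both use Proposition \ref{caractlrblp} (BLP iff $S(A)=A$) together with Corollary \ref{lovedata} ($S(A)={\cal B}(A)$ under involutivity and $\odot=\wedge$) for the first equivalence, and Remark \ref{aplicabila} for the second. Your closing observation on where involutivity is indispensable is a nice sanity check but not needed for the argument itself.
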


\begin{proof} Proposition \ref{caractlrblp} and Corollary \ref{lovedata} show the first equivalence, and Remark \ref{aplicabila} proves the second equivalence.\end{proof}

\begin{remark} $0\in D(A)$ iff $A$ is trivial (see Remark \ref{feasy}), and $0\in {\cal B}(A)\subseteq S(A)$ (see Proposition \ref{colectare}, (\ref{colectare1})), therefore: $D(A)={\cal B}(A)$ iff $D(A)=S(A)$ iff $A$ is trivial.\end{remark}

\begin{proposition}
If $D(A)\subseteq {\cal B}(A)$, then $D(A)=\{1\}$. Consequently, if $D(A)\cup \{0\}={\cal B}(A)$, then $D(A)=\{1\}$ and ${\cal B}(A)=\{0,1\}$.\label{dsbools}\end{proposition}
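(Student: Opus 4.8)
The plan is to derive both assertions directly from Lemma \ref{lemaradsid}, (\ref{consecinta}), which records that $\mathcal{B}(A) \cap D(A) = \{1\}$. The statement is an easy consequence of this identity together with elementary set-theoretic manipulation, so there is no substantive obstacle; the only point to be careful about is making the inclusions go the right way.

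For the first assertion, I would assume $D(A) \subseteq \mathcal{B}(A)$ and observe that this inclusion forces $D(A) = D(A) \cap \mathcal{B}(A)$, since a set contained in another equals its intersection with that other set. Applying Lemma \ref{lemaradsid}, (\ref{consecinta}), the right-hand side equals $\{1\}$, whence $D(A) = \{1\}$. This is the entire argument for the main claim.

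For the consequence, I would start from the hypothesis $D(A) \cup \{0\} = \mathcal{B}(A)$. This immediately yields $D(A) \subseteq D(A) \cup \{0\} = \mathcal{B}(A)$, so the hypothesis of the first part is met and we conclude $D(A) = \{1\}$. Substituting this back into the defining equality gives $\mathcal{B}(A) = D(A) \cup \{0\} = \{1\} \cup \{0\} = \{0,1\}$, which completes the proof. The only thing worth double-checking is that $0 \neq 1$ is not needed here: the equalities $D(A) = \{1\}$ and $\mathcal{B}(A) = \{0,1\}$ hold even in the trivial residuated lattice, where they collapse to $\{1\} = \{0\}$, so no non-triviality assumption is required.
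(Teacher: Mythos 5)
Your proof is correct and follows essentially the same route as the paper: both arguments rest on Lemma \ref{lemaradsid}, (\ref{consecinta}) (that ${\cal B}(A)\cap D(A)=\{1\}$), intersect it with the hypothesis $D(A)\subseteq {\cal B}(A)$ to get $D(A)=\{1\}$, and then substitute back into $D(A)\cup\{0\}={\cal B}(A)$ for the second claim. Your extra observation that no non-triviality assumption is needed is a harmless (and accurate) addition not present in the paper.
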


\begin{proof} As shown by Lemma \ref{lemaradsid}, (\ref{consecinta}), ${\cal B}(A)\cap D(A)=\{1\}$, hence, if $D(A)\subseteq {\cal B}(A)$, then, $\{1\}={\cal B}(A)\cap D(A)=D(A)$. If, moreover, $D(A)\cup \{0\}={\cal B}(A)$, then $D(A)\subseteq {\cal B}(A)$, so $D(A)=\{1\}$, thus ${\cal B}(A)=\{0,1\}$.\end{proof}

\begin{proposition} If ${\cal B}(A)=S(A)$, then $D(A)={\rm Rad}(A)=\{1\}$.\label{cool}\end{proposition}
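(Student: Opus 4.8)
The plan is to obtain both conclusions by chaining the containment ${\rm Rad}(A)\subseteq S(A)$ against the near--disjointness of the radical and the Boolean center. The hypothesis ${\cal B}(A)=S(A)$ collapses the inclusion ${\rm Rad}(A)\subseteq S(A)$ into ${\rm Rad}(A)\subseteq {\cal B}(A)$, and once the radical is trapped inside the Boolean center, Lemma \ref{lemaradsid}, (\ref{boolsirad}), pins it down to $\{1\}$.

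First I would invoke Proposition \ref{colectare}, (\ref{colectare5}), to record that ${\rm Rad}(A)\subseteq S(A)$ holds in any residuated lattice. Substituting the hypothesis $S(A)={\cal B}(A)$ then yields ${\rm Rad}(A)\subseteq {\cal B}(A)$. The key observation is purely set--theoretic: if a set is contained in ${\cal B}(A)$, then intersecting it with ${\cal B}(A)$ leaves it unchanged. Applying this to ${\rm Rad}(A)$ and combining with Lemma \ref{lemaradsid}, (\ref{boolsirad}), I get ${\rm Rad}(A)={\cal B}(A)\cap {\rm Rad}(A)=\{1\}$, which is the first half of the conclusion.

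For the density statement I would then use Lemma \ref{lemaradsid}, (\ref{dense}), which states that $D(A)$ is a filter of $A$ with $D(A)\subseteq {\rm Rad}(A)$. Since we have just shown ${\rm Rad}(A)=\{1\}$, this forces $D(A)\subseteq \{1\}$; and because $D(A)$ is a filter, it contains $1$, so $D(A)=\{1\}$. (Equivalently, one could cite Lemma \ref{lemaradsid}, (\ref{consecinta}), together with $D(A)\subseteq {\cal B}(A)$, but routing through the radical is cleaner since ${\rm Rad}(A)$ has already been computed.)

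I do not anticipate any genuine obstacle here: the statement is a short deduction that merely recombines the inclusions from Proposition \ref{colectare} with the meet--equals--$\{1\}$ facts of Lemma \ref{lemaradsid}. The only point requiring a moment's care is the elementary remark that a subset $X$ of ${\cal B}(A)$ satisfying $X\cap {\cal B}(A)=\{1\}$ must itself be $\{1\}$, which is immediate once one notes $X=X\cap {\cal B}(A)$.
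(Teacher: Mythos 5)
Your proof is correct and is essentially identical to the paper's: both deduce ${\rm Rad}(A)=\{1\}$ by combining ${\rm Rad}(A)\subseteq S(A)$ (Proposition \ref{colectare}, (\ref{colectare5})) with ${\cal B}(A)\cap {\rm Rad}(A)=\{1\}$ (Lemma \ref{lemaradsid}, (\ref{boolsirad})), and then get $D(A)=\{1\}$ from $D(A)\subseteq {\rm Rad}(A)$ plus the fact that every filter contains $1$. The only difference is cosmetic: the paper writes the chain $\{1\}={\cal B}(A)\cap {\rm Rad}(A)=S(A)\cap {\rm Rad}(A)={\rm Rad}(A)$, whereas you phrase the same computation as ${\rm Rad}(A)\subseteq {\cal B}(A)$ followed by intersecting with ${\cal B}(A)$.
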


\begin{proof} Lemma \ref{lemaradsid}, (\ref{boolsirad}), and Proposition \ref{colectare}, (\ref{colectare5}), show that, if ${\cal B}(A)=S(A)$, then $\{1\}={\cal B}(A)\cap {\rm Rad}(A)=S(A)\cap {\rm Rad}(A)={\rm Rad}(A)$, hence ${\rm Rad}(A)=\{1\}$. Lemma \ref{lemaradsid}, (\ref{dense}), and the fact that every filter contains $1$ now imply $D(A)=\{1\}$.\end{proof}

\begin{corollary} If (the underlying bounded lattice of) $A$ is a Boolean algebra, then $D(A)={\rm Rad}(A)=\{1\}$.\end{corollary}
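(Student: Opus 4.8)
The plan is to reduce this immediately to Proposition \ref{cool}, which already delivers the conclusion $D(A)={\rm Rad}(A)=\{1\}$ under the single hypothesis ${\cal B}(A)=S(A)$. So the entire task is to verify that equality in the situation at hand. The key observation is Remark \ref{interestingfact}: when the underlying bounded lattice of $A$ is a Boolean algebra, one has ${\cal B}(A)=A$, the whole residuated lattice coinciding with its own Boolean center (and, as that remark notes, $\odot =\wedge $ as a byproduct, though I will not even need this).

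With ${\cal B}(A)=A$ in hand, I would sandwich $S(A)$ between ${\cal B}(A)$ and $A$. On one side, Proposition \ref{colectare}, (\ref{colectare1}), gives the inclusion ${\cal B}(A)\subseteq S(A)$, which holds for every residuated lattice. On the other side, $S(A)\subseteq A$ holds trivially by definition of $S(A)$ as a subset of $A$. Since ${\cal B}(A)=A$, these two inclusions collapse to ${\cal B}(A)\subseteq S(A)\subseteq A={\cal B}(A)$, forcing ${\cal B}(A)=S(A)=A$.

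Having established ${\cal B}(A)=S(A)$, I would then invoke Proposition \ref{cool} verbatim to conclude that $D(A)={\rm Rad}(A)=\{1\}$, finishing the proof. There is essentially no obstacle here: the content of the corollary is entirely carried by the chain Remark \ref{interestingfact} $\rightarrow$ Proposition \ref{colectare}, (\ref{colectare1}), $\rightarrow$ Proposition \ref{cool}, and the only thing requiring care is correctly citing that a Boolean-algebra lattice reduct forces ${\cal B}(A)=A$ rather than attempting to argue the radical and density statements from scratch.
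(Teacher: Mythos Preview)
Your proposal is correct and follows essentially the same approach as the paper: both establish ${\cal B}(A)=S(A)$ and then invoke Proposition \ref{cool}. The only cosmetic difference is that the paper cites Corollary \ref{saea}, (\ref{saea1}), to obtain $S(A)=A$ directly, whereas you reach the same equality via the sandwich ${\cal B}(A)\subseteq S(A)\subseteq A={\cal B}(A)$ using Proposition \ref{colectare}, (\ref{colectare1}); the paper also remarks, as an aside, that the result admits a direct elementary proof from Lemma \ref{radunit} and idempotency.
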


\begin{proof} If $A$ is a Boolean algebra, then ${\cal B}(A)=A=S(A)$, according to Corollary \ref{saea}, (\ref{saea1}), hence $D(A)={\rm Rad}(A)=\{1\}$ by Proposition \ref{cool}. Actually, this corollary can very easily be deduced from the definition of $D(A)$, Lemma \ref{radunit} and the fact that every element of $A$ is idempotent if $A$ is a Boolean algebra, because then $\odot =\wedge $ in $A$. Notice that, since $\odot =\wedge $ in this case, it follows that the residuated lattice filters of $A$ coincide with its Boolean filters, hence ${\rm Rad}(A)$ coincides with its radical as a Boolean algebra, making this an interesting approach to the same result on Boolean algebras.\end{proof}

\begin{corollary}

If $A$ is involutive and has $\odot =\wedge $, then $D(A)={\rm Rad}(A)=\{1\}$.\end{corollary}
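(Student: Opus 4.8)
The plan is to observe that this corollary sits at the confluence of two results already established, so the proof is a direct chaining rather than a fresh argument. The hypotheses ``$A$ is involutive and has $\odot =\wedge $'' are precisely the hypotheses of Corollary \ref{lovedata}, whose conclusion is $S(A)={\cal B}(A)$, equivalently ${\cal B}(A)=S(A)$. But ${\cal B}(A)=S(A)$ is exactly the hypothesis of Proposition \ref{cool}, whose conclusion is $D(A)={\rm Rad}(A)=\{1\}$. Hence the first step I would take is to invoke Corollary \ref{lovedata} to obtain ${\cal B}(A)=S(A)$, and the second (and final) step is to feed this equality into Proposition \ref{cool} to read off the desired conclusion. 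No genuine obstacle arises; the whole content has been front-loaded into the two cited results.

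For transparency I would also note the completely elementary direct route, which bypasses $S(A)$ altogether and confirms there is nothing subtle hidden here. For $D(A)$: if $a\in D(A)$ then $\neg \, a=0$, so by involutivity and Lemma \ref{aritmlr}, (\ref{aritmlr3}), $a=\neg \, \neg \, a=\neg \, 0=1$, giving $D(A)=\{1\}$. For ${\rm Rad}(A)$: since $\odot =\wedge $ forces every element to be idempotent, the only nilpotent is $0$, i.e. $N(A)=\{0\}$; by Corollary \ref{radnegnil} any $x\in {\rm Rad}(A)$ has $\neg \, x\in N(A)=\{0\}$, so $\neg \, x=0$, i.e. $x\in D(A)=\{1\}$, and since $1$ always lies in ${\rm Rad}(A)$ we conclude ${\rm Rad}(A)=\{1\}$.

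Because the cited-result approach is shorter and matches the surrounding exposition (the two immediately preceding corollaries are proved in the same ``combine Corollary \ref{lovedata} with a structural proposition'' style), I would present the two-step chaining as the proof proper and relegate the direct computation to at most a parenthetical remark. The main risk in writing this up is purely presentational: making sure the hypothesis ${\cal B}(A)=S(A)$ is stated in the exact form demanded by Proposition \ref{cool} before applying it, so that the logical dependency is unambiguous.
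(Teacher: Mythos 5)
Your proposal is correct and takes essentially the same route as the paper: the paper's proof is precisely the two-step chaining of Corollary \ref{lovedata} (involutive plus $\odot =\wedge $ gives $S(A)={\cal B}(A)$) into Proposition \ref{cool} (${\cal B}(A)=S(A)$ gives $D(A)={\rm Rad}(A)=\{1\}$). The paper also adds, just as you do, that the statement follows immediately by the elementary direct route (it cites the definition of $D(A)$ and Lemma \ref{radunit}, which with idempotence and involutivity amounts to your parenthetical computation).
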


\begin{proof} By Corollary \ref{lovedata} and Proposition \ref{cool}. Actually, this also follows immediately from the definition of $D(A)$ and Lemma \ref{radunit}.\end{proof}

Many results resembling the ones in the previous corollaries can be proved here, results which also have straightforward algebraic or arithmetic proofs; but it is interesting to see how arguments as the ones above ``close the circles`` in the study of these algebras, and this is also an illustration of the effectiveness of the mathematical techniques that derive from the study of BLP.

$A$ is said to be {\em quasi--local} iff, for all $a\in A$, there exist $e\in {\cal B}(A)$ and $n\in \N ^*$ such that $a^n\odot e=0$ and $(\neg \, a)^n\odot \neg \, e=0$ (\cite{eu4}). It is straightforward, from Lemma \ref{implicdir}, that any local residuated lattice is quasi--local (see also \cite{eu4}).

\begin{remark} If $(A_i)_{i\in I}$ is a non--empty family of residuated lattices and $\displaystyle A=\prod _{i\in I}A_i$, then:

\begin{enumerate}
\item if $A$ is quasi--local, then, for all $i\in I$, $A_i$ is quasi--local;
\item if either $I$ is finite or $\odot =\wedge $ (which means that all elements are idempotent) in these residuated lattices, then: $A$ is quasi--local iff, for all $i\in I$, $A_i$ is quasi--local.
\end{enumerate}

This is straightforward, by the very definition of quasi--local residuated lattices and the fact that $\displaystyle {\cal B}(A)=\prod _{i\in I}{\cal B}(A_i)$, but it also follows from Propositions \ref{qlocblp}, \ref{prodblp}, \ref{idempprodblp} and \ref{prodblprez} below.\end{remark}

A bounded distributive lattice $L$ is called a {\em B--normal lattice} iff, for all $x,y\in L$, if $x\vee y=1$, then there exist $e,f\in {\cal B}(L)$ such that $e\wedge f=0$ and $x\vee e=y\vee f=1$. B--normal lattices (actually their duals) have been studied it \cite{cig}.

\begin{proposition} The following are equivalent:

\begin{enumerate}
\item\label{qlocblp1} $A$ is quasi--local;
\item\label{qlocblp2} $A$ has BLP;
\item\label{qlocblp3} for all $x,y\in A$, if $[x)\vee [y)=A$, then there exist $e,f\in {\cal B}(A)$ such that $e\vee f=1$ and $[x)\vee [e)=[y)\vee [f)=A$;
\item\label{qlocblp4} the bounded distributive lattice ${\cal PF}(A)$ is dually B--normal;
\item\label{qlocblp5} the bounded distributive lattice $(A,\vee ,\odot ,0,1)$ is B--normal.\end{enumerate}\label{qlocblp}\end{proposition}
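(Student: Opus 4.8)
The plan is to prove the five conditions equivalent by establishing a cycle of implications together with the two translations into lattice language. I would organize it as $(\ref{qlocblp2})\Leftrightarrow(\ref{qlocblp3})$, then $(\ref{qlocblp3})\Leftrightarrow(\ref{qlocblp1})$, and finally the purely lattice-theoretic equivalences $(\ref{qlocblp3})\Leftrightarrow(\ref{qlocblp4})\Leftrightarrow(\ref{qlocblp5})$. The central engine is the arithmetic characterization of BLP from Proposition \ref{caractlrblp}, specifically condition (\ref{caractlrblp3}): $A$ has BLP iff for every $x\in A$ there is a Boolean element $e$ with $e\in[x)$ and $\neg\,e\in[\neg\,x)$. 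Recalling that $[a)\subseteq[b)$ iff $b\leq a$ iff some power $b^n\leq a$, the membership $e\in[x)$ says exactly that $x^n\leq e$ for some $n$, i.e. $x^n\odot\neg\,e=0$ by Lemma \ref{aritmlr}, (\ref{aritmlr7}) and (\ref{aritmcbool1}); and $\neg\,e\in[\neg\,x)$ says $(\neg\,x)^m\leq\neg\,e$, i.e. $(\neg\,x)^m\odot e=0$. Taking a common exponent $n$ shows that condition (\ref{caractlrblp3}) is literally the definition of quasi-local, which gives $(\ref{qlocblp1})\Leftrightarrow(\ref{qlocblp2})$ almost immediately; this is the quickest route and I would lean on it rather than reproving things from scratch.

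For the equivalence of (\ref{qlocblp2})/(\ref{qlocblp3}), I would again translate through principal filters. Condition (\ref{qlocblp3}) is a statement about pairs $x,y$, so first I would reduce it to the single-variable form by specializing $y=\neg\,x$: since $[x)\vee[\neg\,x)=[x\odot\neg\,x)=[0)=A$ by Lemma \ref{aritmlr}, (\ref{aritmlr7}), the hypothesis $[x)\vee[y)=A$ is automatically met for $y=\neg\,x$, and the conclusion $[x)\vee[e)=A$, $[\neg\,x)\vee[f)=A$ with $e\vee f=1$ should be matched against condition (\ref{caractlrblp3}) after replacing $f$ by $\neg\,e$ (using that in a Boolean algebra $e\vee f=1$ can be arranged with $f=\neg\,e$, or more carefully that one can shrink $f$ to $\neg\,e$). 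Here $[x)\vee[e)=A$ means $[x\odot e)=A$, i.e. $(x\odot e)^n=0$, which rearranges to $x^n\odot e=0$ since $e$ is idempotent. The converse direction, deriving the two-variable statement (\ref{qlocblp3}) from BLP, is the part requiring genuine care: given $[x)\vee[y)=A$, apply the single-variable lifting to $x$ to get a Boolean $e$, and verify that the required $f$ can be constructed from the Boolean witness for $y$ together with the hypothesis relating $x$ and $y$.

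The lattice-theoretic equivalences are then essentially bookkeeping through the isomorphism $\lambda:A\to{\cal PF}(A)$, $\lambda(a)=[a)$, which the preliminaries establish as a bounded-lattice isomorphism onto ${\cal PF}(A)$ and as an anti-isomorphism $(A,\vee,\odot,0,1)\to{\cal PF}(A)$. Under $\lambda$, the join $\vee$ of principal filters corresponds to $\odot$, meets correspond to $\vee$, and $A$ corresponds to $0$, so condition (\ref{qlocblp3}) rewritten in $(A,\vee,\odot,0,1)$ becomes exactly the B--normality condition there, giving $(\ref{qlocblp3})\Leftrightarrow(\ref{qlocblp5})$; and transporting back up to ${\cal PF}(A)$ but reversing the order (because $\lambda$ is a dual isomorphism onto $(A,\vee,\odot,0,1)$) turns B--normality into \emph{dual} B--normality of ${\cal PF}(A)$, giving $(\ref{qlocblp3})\Leftrightarrow(\ref{qlocblp4})$. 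The main obstacle I anticipate is not any single deep step but the careful matching of quantifiers and the choice of witnesses $e,f$ when passing between the two-variable condition (\ref{qlocblp3}) and the one-variable conditions; in particular one must track that the Boolean center ${\cal B}(A)$ coincides with ${\cal B}(A,\vee,\odot,0,1)$ (noted in the preliminaries) so that the Boolean witnesses in the B--normality conditions are the same Boolean elements used in the BLP characterization, and one must be attentive to replacing an arbitrary complementary pair by the canonical pair $(e,\neg\,e)$ without losing the equations $x^n\odot e=0$ and $(\neg\,x)^n\odot\neg\,e=0$.
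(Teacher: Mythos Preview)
Your proposal is correct and follows essentially the paper's approach: the equivalence $(\ref{qlocblp1})\Leftrightarrow(\ref{qlocblp2})$ via Proposition \ref{caractlrblp}, the reduction $(\ref{qlocblp3})\Rightarrow(\ref{qlocblp2})$ by specializing $y=\neg\,x$, and the lattice translations $(\ref{qlocblp3})\Leftrightarrow(\ref{qlocblp4})\Leftrightarrow(\ref{qlocblp5})$ through the isomorphism $\lambda$ (together with ${\cal B}(A)={\cal B}(A,\vee,\odot,0,1)$) all match the paper's argument. The only place you are noticeably sketchier than the paper is $(\ref{qlocblp2})\Rightarrow(\ref{qlocblp3})$, which the paper carries out by applying the BLP witnesses to both $x^n$ and $y^n$ (after extracting $n$ with $(x\odot y)^n=0$) and then taking the final Boolean pair to be $\neg\,a,\neg\,b$ with $a=\neg\,f\odot e$, $b=\neg\,e\odot f$---precisely the ``combine the two witnesses using the hypothesis'' step you anticipate but leave unspecified.
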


\begin{proof} (\ref{qlocblp1})$\Leftrightarrow $(\ref{qlocblp2}): $A$ is quasi--local iff, for all $a\in A$, there exist $e\in {\cal B}(A)$ and $n\in \N ^*$ such that $a^n\odot e=0$ and $(\neg \, a)^n\odot \neg \, e=0$, that is $a^n\leq \neg \, e$ and $(\neg \, a)^n\leq \neg \, \neg \, e$, according to Lemma \ref{aritmlr}, (\ref{aritmlr7}).

By Proposition \ref{caractlrblp}, $A$ has BLP iff, for all $a\in A$, there exists $f\in {\cal B}(A)$ such that $f\in [x)$ and $\neg \, f\in [\neg \, x)$, that is $a^m\leq f$ and $(\neg \, a)^k\leq \neg \, f$ for some $m,k\in \N ^*$.

Now the needed equivalence is clear: for the direct implication, take $f=\neg \, e$ (see Lemma \ref{aritmcbool}, (\ref{aritmcbool1})) and $m=k=n$; for the converse implication, take $e=\neg \, f$ (see again Lemma \ref{aritmcbool}, (\ref{aritmcbool1})) and $n=\max \{m,k\}$, and apply Lemma \ref{aritmlr}, (\ref{aritmlr1}).

\noindent (\ref{qlocblp3})$\Rightarrow $(\ref{qlocblp1}): Let $x\in A$, arbitrary. By Lemma \ref{aritmlr}, (\ref{aritmlr7}), $x\odot \neg \, x=0$, hence $[x)\vee [\neg \, x)=[x\odot \neg \, x)=A$. Now the hypothesis of this implication shows that there exist $e,f\in {\cal B}(A)$ such that $e\vee f=1$ and $[x)\vee [e)=[\neg \, x)\vee [f)=A$. From $e\vee f=1$, Lemma \ref{aritmcbool}, (\ref{aritmcbool4}), and Lemma \ref{aritmlr}, (\ref{aritmlr2}), give us $\neg \, e\rightarrow f=1$, that is $\neg \, e\leq f$. $[0)=A=[x)\vee [e)=[x\odot e)$ means that $x^n\odot e=x^n\odot e^n=(x\odot e)^n=0$ for some $n\in \N ^*$, so $x^n\odot e=0$, thus $x^n\leq \neg \, e\leq f$, hence $\neg \, e\in [x)$ and $f\in [x)$; we have applied Lemma \ref{aritmcbool}, (\ref{aritmcbool2}) and Lemma \ref{aritmlr}, (\ref{aritmlr7}). Analogously, $A=[\neg \, x)\vee [f)$ implies $\neg \, f\in [\neg \, x)$. So $f\in {\cal B}(A)$, $f\in [x)$ and $\neg \, f\in [\neg \, x)$. Therefore $A$ has BLP, according to Proposition \ref{caractlrblp}.

\noindent (\ref{qlocblp1})$\Rightarrow $(\ref{qlocblp3}): Let $x,y\in A$, such that $[x\odot y)=[x)\vee [y)=A=[0)$, which means that $x^n\odot y^n=(x\odot y)^n=0$ for some $n\in \N ^*$. The hypothesis of this implication states that $A$ has BLP. Then, according to Proposition \ref{caractlrblp}, there exist $e,f\in {\cal B}(A)$ such that $e\in [x^n)$, $\neg \, e\in [\neg \, x^n)$, $f\in [y^n)$ and $\neg \, f\in [\neg \, y^n)$, thus $x^{np}\leq e$, $(\neg \, x^n)^q\leq \neg \, e$, $y^{nr}\leq f$ and $(\neg \, y^n)^s\leq \neg \, f$ for some $p,q,r,s\in \N ^*$. Let $k=\max \{p,q,r,s\}\in \N ^*$. By Lemma \ref{aritmlr}, (\ref{aritmlr1}), it follows that $x^{nk}\leq e$, $(\neg \, x^n)^k\leq \neg \, e$, $y^{nk}\leq f$ and $(\neg \, y^n)^k\leq \neg \, f$. By Lemma \ref{aritmlr}, (\ref{aritmlr7}) and (\ref{aritmlr1}), from $x^n\odot y^n=0$ we obtain: $x^n\leq \neg \, y^n$, hence $x^{nk}\leq (\neg \, y^n)^k\leq \neg \, f$, and $y^n\leq \neg \, x^n$, hence $y^{nk}\leq (\neg \, x^n)^k\leq \neg \, e$. So, by Lemma \ref{aritmcbool}, (\ref{aritmcbool2}), $x^{nk}\leq e$ and $x^{nk}\leq \neg \, f$, thus $x^{nk}\leq \neg \, f\wedge e=\neg \, f\odot e$, while $y^{nk}\leq f$ and $y^{nk}\leq \neg \, e$, thus $y^{nk}\leq \neg \, e\wedge f=\neg \, e\odot f$. We denote $a=\neg \, f\odot e\in {\cal B}(A)$ and $b=\neg \, e\odot f\in {\cal B}(A)$, hence $\neg \, a,\neg \, b\in {\cal B}(A)$. By Lemma \ref{aritmlr}, (\ref{aritmlr7}), and Lemma \ref{aritmcbool}, (\ref{aritmcbool1}), $x^{nk}\leq a=\neg \, \neg \, a$ and $y^{nk}\leq b=\neg \, \neg \, b$ are equivalent to $x^{nk}\odot \neg \, a=0$ and $y^{nk}\odot \neg \, b=0$, from which we get $A=[0)=[x^{nk}\odot \neg \, a)=[x^{nk})\vee [\neg \, a)=[x)\vee [\neg \, a)$ and $A=[0)=[y^{nk}\odot \neg \, b)=[y^{nk})\vee [\neg \, b)=[y)\vee [\neg \, b)$. By Lemma \ref{aritmcbool}, (\ref{aritmcbool1}) and (\ref{aritmcbool2}), and Lemma \ref{aritmlr}, (\ref{aritmlr7}), $a\odot b=\neg \, e\odot f\odot \neg \, f\odot e=e\odot \neg \, e\odot f\odot \neg \, f=0\odot 0=0$, thus $1=\neg \, 0=\neg \, (a\odot b)=\neg \, (a\wedge b)=\neg \, a\vee \neg \, b$.

\noindent (\ref{qlocblp4})$\Leftrightarrow $(\ref{qlocblp5}): By the fact that the bounded lattice ${\cal PF}(A)$ is isomorphic to the dual of $(A,\vee ,\odot ,0,1)$.

\noindent (\ref{qlocblp3})$\Leftrightarrow $(\ref{qlocblp4}): (\ref{qlocblp3}) states exactly the fact that the bounded distributive lattice ${\cal PF}(A)$ is dually B--normal, because: $e\vee f=1$ is equivalent to $\{1\}=[1)=[e\vee f)=[e)\cap [f)$, and we have: $\lambda :A\rightarrow {\cal PF}(A)$, defined by $\lambda (a)=[a)$ for all $a\in A$, is a bounded lattice isomorphism between $(A,\vee ,\odot ,0,1)$ and the dual of {\cal PF}(A), the notion of Boolean center is clearly dual to itself, and the Boolean center ${\cal B}(A)$ of the residuated lattice $A$ coincides with the Boolean center of the bounded distributive lattice $(A,\vee ,\odot ,0,1)$, hence ${\cal B}({\cal PF}(A))=\lambda ({\cal B}(A))=\{\lambda (e)\ |\ e\in {\cal B}(A)\}=\{[e)\ |\ e\in {\cal B}(A)\}$.\end{proof}

\begin{note} The equivalences in the previous proposition set residuated lattices with BLP in relation to two important classes of algebras: quasi--local residuated lattices and B--normal lattices. These connections will allow a transfer of properties to be made between these three types of structures.\end{note}

\begin{note} In \cite{figg}, quasi--local MV--algebras have been introduced as a generalization of local MV--algebras, and they have been characterized as weak--Boolean products of local MV--algebras. This result has been extended to quasi--local BL--algebras in \cite{adinggll} (see also \cite{leo}). These works contain many other characterizations for quasi--local MV--algebras and BL--algebras. In \cite{eu4}, quasi--local residuated lattices have been studied in relation to certain types of filters of a residuated lattice, the set of the dense elements and Glivenko residuated lattices. It remains an open problem to determine whether quasi--local residuated lattices (and thus residuated lattices with BLP) can be represented as weak--Boolean products of local residuated lattices.\end{note}

\begin{corollary}
Any local residuated lattice has BLP. Moreover, if $A$ is a local residuated lattice and $F$ is a proper filter of $A$, then ${\cal B}(p_F)$ is a bijection.\label{locblp}\end{corollary}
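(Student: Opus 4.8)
The plan is to derive both assertions from results already in place, so the argument will be structural rather than computational. For the first assertion I would invoke the observation recorded just before Proposition \ref{qlocblp}, that every local residuated lattice is quasi--local (which itself follows immediately from Lemma \ref{implicdir}). Since Proposition \ref{qlocblp} establishes the equivalence between being quasi--local and having BLP, a local residuated lattice automatically has BLP. Alternatively, one can argue directly: by Lemma \ref{implicdir} a local $A$ has ${\cal B}(A)=\{0,1\}$, so Remark \ref{booltriv} gives $S(A)=N(A)\cup \{x\in A\ |\ \neg \, x\in N(A)\}$, and the same Lemma \ref{implicdir} identifies this set with all of $A$; hence $S(A)=A$ and $A$ has BLP by Proposition \ref{caractlrblp}.

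For the second assertion I would treat surjectivity and injectivity of ${\cal B}(p_F)$ separately. Surjectivity is immediate: since $A$ has BLP, the proper filter $F$ has BLP, which by the very definition of BLP for a filter means that ${\cal B}(p_F)$ is surjective. Injectivity is where Proposition \ref{filtcuinj} (equivalently Corollary \ref{pfauinj}) enters: because $A$ is local, Lemma \ref{implicdir} gives ${\cal B}(A)=\{0,1\}$, and because $F$ is proper it does not contain $0$, whence ${\cal B}(A)\cap F=\{0,1\}\cap F=\{1\}$. By Proposition \ref{filtcuinj} this forces ${\cal B}(p_F)$ to be injective. Combining the two conclusions, ${\cal B}(p_F)$ is a bijection.

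The only point requiring any care is the interplay between the word \emph{proper} and the element $0$: one must recall that a filter is proper exactly when it omits $0$, which is precisely what removes $0$ from ${\cal B}(A)\cap F$ and yields injectivity. Beyond that there is no genuine obstacle; the corollary is essentially an assembly of Lemma \ref{implicdir}, Proposition \ref{qlocblp} (or Proposition \ref{caractlrblp} together with Remark \ref{booltriv}) for the BLP part, and Corollary \ref{pfauinj} for the bijectivity part.
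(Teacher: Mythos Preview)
Your proof is correct and follows essentially the same route as the paper: local $\Rightarrow$ quasi--local $\Rightarrow$ BLP via Proposition \ref{qlocblp}, then surjectivity of ${\cal B}(p_F)$ from BLP and injectivity from Corollary \ref{pfauinj} using ${\cal B}(A)=\{0,1\}$. The paper also mentions the alternative injectivity route through Corollary \ref{filtinrad}, (\ref{filtinrad1}) (every proper filter of a local $A$ is contained in ${\rm Rad}(A)$), and your extra direct argument for BLP via Remark \ref{booltriv} and Proposition \ref{caractlrblp} is a valid bonus.
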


\begin{proof} Let $A$ be a local residuated lattice. Then $A$ is quasi--local, hence $A$ has BLP by Proposition \ref{qlocblp}.

The second statement in the enunciation can be proved in more than one way at this point. Certainly, the surjectivity of ${\cal B}(p_F)$ comes from the BLP; its injectivity can be shown by Corollary \ref{filtinrad}, (\ref{filtinrad1}), Lemma \ref{maxsiprime}, (\ref{maxsiprime2}), and the fact that the only maximal filter of $A$ is ${\rm Rad}(A)$, or directly from Corollary \ref{pfauinj} and the fact that ${\cal B}(A)=\{0,1\}$.\end{proof}

In the following two remarks, we shall use Remark \ref{interestingfact} and the fact that the filters of a residuated lattice with $\odot =\wedge $ coincide with the filters of its bounded lattice reduct.

\begin{remark} There exist quasi--local residuated lattices which are not local. By Proposition \ref{qlocblp}, these are exactly those residuated lattices that have BLP, but are not local, which proves that the converse of the first statement in Corollary \ref{locblp} is not true.

Here is an example of a quasi--local residuated lattice which is not local: let $A$ be the direct product between the two--element chain and itself, which is a Boolean algebra, hence a residuated lattice with BLP, according to Remark \ref{boolblp}. Then $A$ is not local, because it has two distinct maximal filters. In general, for any $n\in \N ^*$, the Boolean algebra induced by the $n$--th power of the two--element chain has BLP, according to Remark \ref{boolblp}, thus it is quasi--local, by Proposition \ref{qlocblp}, but it has exactly $n$ different maximal filters, so, for $n\geq 2$, it is not local.\label{qlocnotloc}\end{remark}

\begin{remark} There exist quasi--local residuated lattices which are not semilocal. By Proposition \ref{qlocblp}, this means that there exist residuated lattices with BLP which are not semilocal. In order to find one such example, we shall adapt the example in the previous remark, by turning the nonzero natural exponent to an infinite set. Let us denote by ${\cal L}_2$ the two--element chain, and let $I$ be an infinite set. Then ${\cal L}_2^I$ is a Boolean algebra, thus a residuated lattice with BLP, by Remark \ref{boolblp}, hence a quasi--local residuated lattice, according to Proposition \ref{qlocblp}. But ${\rm Max}({\cal L}_2^I)$ is in bijection to $I$, thus it is infinite, so ${\cal L}_2^I$ is not a semilocal residuated lattice.\label{qlocnotsloc}\end{remark}

\section{Boolean Lifting Property and Direct Products of Residuated Lattices}
\label{prods}

The purpose of this section is to study the behaviour of BLP with respect to direct products of residuated lattices. We shall prove that a finite direct product of residuated lattices has BLP iff each residuated lattice in the product has BLP and, moreover, this holds for individual filters. Weaker results hold for arbitrary direct products of residuated lattices.

Until mentioned otherwise, let $(A_i)_{i\in I}$ be a non--empty family of residuated lattices and $\displaystyle A=\prod _{i\in I}A_i$. Since residuated lattices form an equational class, it follows that $A$ becomes a residuated lattice with the operations defined canonically, that is componentwise. Also, clearly, $\odot =\wedge $ in $A$ iff $\odot =\wedge $ in $A_i$, for each $i\in I$.

It is immediate that $\displaystyle {\cal B}(A)=\prod _{i\in I}{\cal B}(A_i)$.

Now let us denote, for each $i\in I$, the canonical projection $pr_i:A\rightarrow A_i$, which is, obviously, a surjective residuated lattice morphism, hence, for every filter $F$ of $A$, $pr_i(F)$ is a filter of $A_i$. Also, if, for each $i\in I$, $F_i$ is a filter of $A_i$, then clearly $\displaystyle F=\prod _{i\in I}F_i$ is a filter of $A$ with $pr_i(F)=F_i$ for all $i\in I$. This means that the mapping $\displaystyle F\rightarrow (pr_i(F))_{i\in I}$ from ${\cal F}(A)$ to $\displaystyle \prod _{i\in I}{\cal F}(A_i)$ is well defined and surjective. Clearly every filter $F$ of $A$ satisfies $\displaystyle F\subseteq \prod _{i\in I}pr_i(F)$, and it is straightforward that, when $I$ is finite, say $I=\overline{1,n}$, with $n\in \N ^*$, then the converse inclusion holds as well, that is every filter $F$ of $A$ has the property that $\displaystyle F=\prod _{i=1}^npr_i(F)$, which shows that, in this case, the mapping above is also injective, thus it is a bijection between ${\cal F}(A)$ and $\displaystyle \prod _{i=1}^n{\cal F}(A_i)$. 

It is straightforward, from Lemma \ref{radunit}, that $\displaystyle {\rm Rad}(A)\subseteq \prod _{i\in I}{\rm Rad}(A_i)$ and, if $I$ is finite, say $I=\overline{1,n}$, with $n\in \N ^*$, then the converse inclusion holds as well, so that $\displaystyle {\rm Rad}(A)=\prod _{i=1}^n{\rm Rad}(A_i)$.

Also, according to \cite[Theorem $3.3$]{eu3}, if $I$ is finite, $I=\overline{1,n}$ with $n\in \N ^*$, then $\displaystyle |{\rm Max}(A)|=\sum _{i=1}^n|{\rm Max}(A_i)|$.

The results that follow refer to products of non--empty families of residuated lattices. We have separated the results that only hold for finite non--empty products from those which hold in the general case. We have also pointed out the results which hold for arbitrary non--empty families of residuated lattices all having $\odot =\wedge $, that is all having all elements idempotent; of course, the direct product of such a family also has $\odot =\wedge $, that is it has all elements idempotent. We will do the same for all following results involving direct products.

\begin{lemma}
Let $(A_i)_{i\in I}$ be a non--empty family of residuated lattices, and $\displaystyle A=\prod _{i\in I}A_i$. Then $\displaystyle S(A)=S(\prod _{i\in I}A_i)\subseteq \prod _{i\in I}S(A_i)$.\label{proddirl0}\end{lemma}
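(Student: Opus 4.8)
The plan is to argue directly from the arithmetic description of $S$ recorded in Remark \ref{rsa}, reading every condition componentwise. Recall that, for any residuated lattice $B$, that remark gives $S(B)=\{x\in B\ |\ (\exists\, e\in{\cal B}(B))\, e\in[x)\mbox{ and }\neg\, e\in[\neg\, x)\}$, and that, by the general form of a principal filter, $e\in[x)$ means $x^m\leq e$ for some $m\in\N^*$, while $\neg\, e\in[\neg\, x)$ means $(\neg\, x)^n\leq\neg\, e$ for some $n\in\N^*$. I would also use the two structural facts recorded just before the lemma: that ${\cal B}(A)=\prod_{i\in I}{\cal B}(A_i)$, and that the order, the negation and the powers (computed via $\odot$) in $A$ are all taken componentwise.

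First I would fix an arbitrary $x=(x_i)_{i\in I}\in S(A)$ and invoke Remark \ref{rsa} to produce a single global witness $e=(e_i)_{i\in I}\in{\cal B}(A)$ together with $m,n\in\N^*$ such that $x^m\leq e$ and $(\neg\, x)^n\leq\neg\, e$ in $A$. Since ${\cal B}(A)=\prod_{i\in I}{\cal B}(A_i)$, each component satisfies $e_i\in{\cal B}(A_i)$.

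Next I would simply project onto each factor. As $\leq$, $\neg$ and $x\mapsto x^k$ are all componentwise in $A$, the two inequalities above yield, for every $i\in I$, $x_i^m\leq e_i$ and $(\neg\, x_i)^n\leq\neg\, e_i$, that is $e_i\in[x_i)$ and $\neg\, e_i\in[\neg\, x_i)$ with $e_i\in{\cal B}(A_i)$. By Remark \ref{rsa} applied to $A_i$, this gives $x_i\in S(A_i)$ for every $i\in I$, hence $x\in\prod_{i\in I}S(A_i)$, which is the asserted inclusion. The identity $S(A)=S(\prod_{i\in I}A_i)$ is of course nothing but the definition of $A$.

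There is no genuine obstacle here: the argument is essentially the observation that a single global Boolean witness, carrying global exponents $m$ and $n$, restricts coordinatewise to per-factor witnesses. The only point worth flagging is why this yields merely an inclusion and not an equality. For the reverse inclusion one would assemble per-factor witnesses $e_i\in{\cal B}(A_i)$ into $e=(e_i)_{i\in I}\in{\cal B}(A)$, but one would also need exponents $m,n$ valid in every coordinate simultaneously; for infinite $I$ the per-factor exponents may be unbounded, so no single global pair $(m,n)$ need exist. This is precisely the gap that closes when $I$ is finite, by passing to maxima of the finitely many exponents, which is the setting in which the corresponding equality can be expected to hold.
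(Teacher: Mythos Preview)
Your proof is correct and follows essentially the same approach as the paper's own proof: both invoke Remark \ref{rsa} and the identity ${\cal B}(A)=\prod_{i\in I}{\cal B}(A_i)$, take a global Boolean witness $e=(e_i)_{i\in I}$ for $x\in S(A)$, and read the conditions $e\in[x)$, $\neg\, e\in[\neg\, x)$ componentwise to conclude $x_i\in S(A_i)$ for each $i$. Your version is slightly more explicit in unwinding the principal-filter conditions into the exponent inequalities $x^m\leq e$ and $(\neg\, x)^n\leq\neg\, e$, and your closing remark on why the reverse inclusion fails for infinite $I$ correctly anticipates the finite case treated in Lemma \ref{proddirl1}.
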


\begin{proof} We shall be using the fact that $\displaystyle {\cal B}(A)={\cal B}(\prod _{i\in I}A_i)=\prod _{i\in I}{\cal B}(A_i)$, along with Remark \ref{rsa}. For every $x\in S(A)$, there exists $e\in {\cal B}(A)$ such that $e\in [x)$ and $\neg \, e\in [\neg \, x)$. But then $x=(x_i)_{i\in I}$ and $e=(e_i)_{i\in I}$, with $x_i\in A_i$ and $e_i\in {\cal B}(A_i)$ for each $i\in I$, and it is immediate that, for all $i\in I$, we have $e_i\in [x_i)$ and $\neg \, e_i\in [\neg \, x_i)$, which means that $x_i\in S(A_i)$. Thus $\displaystyle S(A)=S(\prod _{i\in I}A_i)\subseteq \prod _{i\in I}S(A_i)$.\end{proof}

\begin{lemma}
Let $n\in \N ^*$, $(A_i)_{i=1}^n$ be a family of residuated lattices, and $\displaystyle A=\prod _{i=1}^nA_i$. Then $\displaystyle S(A)=S(\prod _{i=1}^nA_i)=\prod _{i=1}^nS(A_i)$.\label{proddirl1}\end{lemma}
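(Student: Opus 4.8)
The plan is to reduce everything to a single new inclusion. By Lemma \ref{proddirl0} we already have $\displaystyle S(A)\subseteq \prod _{i=1}^nS(A_i)$, and that inclusion holds with no finiteness hypothesis whatsoever; hence the entire content of the finite case is the reverse inclusion $\displaystyle \prod _{i=1}^nS(A_i)\subseteq S(A)$, which I would establish directly via the coordinatewise description of $S$ furnished by Remark \ref{rsa}.

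First I would fix an arbitrary $x=(x_i)_{i=1}^n\in \prod _{i=1}^nS(A_i)$, so $x_i\in S(A_i)$ for each $i$. By Remark \ref{rsa}, for each $i$ there is a Boolean element $e_i\in {\cal B}(A_i)$ with $e_i\in [x_i)$ and $\neg \, e_i\in [\neg \, x_i)$; unwinding the general form of a principal filter, this yields exponents $m_i,k_i\in \N ^*$ such that $x_i^{m_i}\leq e_i$ and $(\neg \, x_i)^{k_i}\leq \neg \, e_i$. I would then assemble the candidate witness $e=(e_i)_{i=1}^n$, which lies in ${\cal B}(A)$ precisely because $\displaystyle {\cal B}(A)=\prod _{i=1}^n{\cal B}(A_i)$.

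Next, using that $\odot $ and $\neg $ are computed componentwise in the product, I would set $m=\max \{m_1,\ldots ,m_n\}$ and $k=\max \{k_1,\ldots ,k_n\}$ -- this is the one place where finiteness is genuinely needed. By Lemma \ref{aritmlr}, (\ref{aritmlr1}) (from which $a\odot b\leq a$, so a higher power lies below a lower one), each coordinate satisfies $x_i^m\leq x_i^{m_i}\leq e_i$ and $(\neg \, x_i)^k\leq (\neg \, x_i)^{k_i}\leq \neg \, e_i$. Reading these off coordinate by coordinate gives $x^m\leq e$ and $(\neg \, x)^k\leq \neg \, e$ in $A$, i.e. $e\in [x)$ and $\neg \, e\in [\neg \, x)$ with $e\in {\cal B}(A)$, so Remark \ref{rsa} delivers $x\in S(A)$. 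Combining the two inclusions yields $\displaystyle S(A)=\prod _{i=1}^nS(A_i)$.

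The only real obstacle is exactly the passage from infinitely many local witnesses to a single uniform pair of exponents: each coordinate supplies its own $m_i,k_i$, and to obtain one $m$ (respectively $k$) that works simultaneously for every coordinate one must take a maximum, which exists only because $I=\overline{1,n}$ is finite. This is precisely the step that breaks down for infinite products, which is why Lemma \ref{proddirl0} can only guarantee an inclusion in general. A minor point to verify along the way is that negation and multiplication really are componentwise in $A$ (immediate from the componentwise definition of the operations), so that the coordinatewise inequalities transfer verbatim to inequalities in $A$.
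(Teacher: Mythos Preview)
Your proof is correct and follows essentially the same approach as the paper's own argument: invoke Lemma \ref{proddirl0} for one inclusion, then for the reverse inclusion pick coordinatewise Boolean witnesses $e_i$ via Remark \ref{rsa}, take the maximum of the finitely many exponents, and use Lemma \ref{aritmlr}, (\ref{aritmlr1}), to get uniform exponents that work in every coordinate. Your additional commentary on why finiteness is essential (the existence of the maximum) is a welcome clarification but does not alter the route.
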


\begin{proof} The fact that $\displaystyle S(A)=S(\prod _{i=1}^nA_i)\subseteq \prod _{i=1}^nS(A_i)$ follows from Lemma \ref{proddirl0}. Now let $x=(x_1,\ldots ,x_n)\in \prod _{i=1}^nS(A_i)$, that is $x_i\in S(A_i)$ for each $i\in \overline{1,n}$, which means that, for every $i\in \overline{1,n}$, there exists an $e_i\in {\cal B}(A_i)$ such that $e_i\in [x_i)$ and $\neg \, e_i\in [\neg \, x_i)$, that is $x_i^{j_i}\leq e_i$ and $(\neg \, x_i)^{k_i}\leq \neg \, e_i$ for some $j_i,k_i\in \N ^*$. Let $j=\max \{j_i\ |\ i\in \overline{1,n}\}\in \N ^*$ and $k=\max \{k_i\ |\ i\in \overline{1,n}\}\in \N ^*$. By Lemma \ref{aritmlr}, (\ref{aritmlr1}), it follows that, for all $i\in \overline{1,n}$, $x_i^j\leq e_i$ and $(\neg \, x_i)^k\leq \neg \, e_i$, hence $x^j=(x_1^j,\ldots ,x_n^j)\leq (e_1,\ldots ,e_n)$ and $(\neg \, x)^k=((\neg \, x_1)^k,\ldots ,(\neg \, x_n)^k)\leq (\neg \, e_1,\ldots ,\neg \, e_n)=\neg \, (e_1,\ldots ,e_n)$. Let $e=(e_1,\ldots ,e_n)\in {\cal B}(A)$. We have shown that $x^j\leq e$ and $(\neg \, x)^k\leq \neg \, e$, with $j,k\in \N ^*$, which means that $e\in [x)$ and $\neg \, e\in [\neg \, x)$, hence $x\in S(A)$, therefore $\displaystyle \prod _{i=1}^nS(A_i)\subseteq S(A)$. Thus $\displaystyle S(A)=S(\prod _{i=1}^nA_i)=\prod _{i=1}^nS(A_i)$.\end{proof}

\begin{lemma}
Let $(A_i)_{i\in I}$ be a non--empty family of residuated lattices all of which have $\odot =\wedge $, and $\displaystyle A=\prod _{i\in I}A_i$. Then $\displaystyle S(A)=S(\prod _{i\in I}A_i)=\prod _{i\in I}S(A_i)$.\label{proddirl2}\end{lemma}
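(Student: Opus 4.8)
The plan is to reduce the whole statement to the closed-form description of $S$ provided by Proposition \ref{sinallid}, which becomes available precisely because the hypothesis $\odot =\wedge $ is inherited by the product. One half of the asserted equality, namely $\displaystyle S(A)\subseteq \prod _{i\in I}S(A_i)$, is already recorded in Lemma \ref{proddirl0} and needs nothing further; so the entire content lies in the reverse inclusion $\displaystyle \prod _{i\in I}S(A_i)\subseteq S(A)$, and in fact I expect to obtain it together with the forward inclusion in a single chain of equivalences.

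First I would note that, since the operations of $A$ are computed componentwise and each $A_i$ has $\odot =\wedge $, the product $A$ itself has $\odot =\wedge $. This is the key observation, because it licenses applying Proposition \ref{sinallid} \emph{both} to $A$ and to every factor $A_i$: it yields $S(A)=\{x\in A\ |\ \neg \, x\in {\cal B}(A)\}$ and, for each $i\in I$, $S(A_i)=\{x_i\in A_i\ |\ \neg \, x_i\in {\cal B}(A_i)\}$.

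Next I would combine this with the two componentwise facts available before the lemma: negation in $A$ is computed componentwise, so $\neg \, (x_i)_{i\in I}=(\neg \, x_i)_{i\in I}$, and $\displaystyle {\cal B}(A)=\prod _{i\in I}{\cal B}(A_i)$. Then, for $x=(x_i)_{i\in I}\in A$, the condition $\displaystyle x\in \prod _{i\in I}S(A_i)$ unwinds to ``$\neg \, x_i\in {\cal B}(A_i)$ for every $i\in I$'', which by the product description of the Boolean center is exactly ``$\neg \, x\in {\cal B}(A)$'', that is, $x\in S(A)$. This string of equivalences delivers $\displaystyle S(A)=\prod _{i\in I}S(A_i)$ in one stroke, and in particular the missing inclusion.

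I do not anticipate a genuine obstacle in this argument; rather, the point worth flagging is \emph{why} this statement is separated from the finite case of Lemma \ref{proddirl1}. There the witnessing exponents $j_i,k_i$ were combined by passing to their maximum, which is impossible over an infinite index set. The hypothesis $\odot =\wedge $ removes this difficulty completely, since then $x^n=x$ for all $n\in \N ^*$ and every principal filter $[x)$ equals the up-set $\{y\in A\ |\ x\leq y\}$, so no supremum of exponents is ever required. The only step needing care is therefore the legitimacy of invoking Proposition \ref{sinallid} for the product $A$ itself, which holds precisely because $\odot =\wedge $ is passed on to $A$.
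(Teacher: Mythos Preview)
Your proof is correct, but it takes a genuinely different route from the paper's own argument. The paper does not invoke Proposition \ref{sinallid}; instead it mirrors the proof of the finite case (Lemma \ref{proddirl1}) directly: given $x=(x_i)_{i\in I}\in \prod _{i\in I}S(A_i)$, it picks witnesses $e_i\in {\cal B}(A_i)$ with $e_i\in [x_i)$ and $\neg \, e_i\in [\neg \, x_i)$, observes that idempotency turns these into the plain inequalities $x_i\leq e_i$ and $\neg \, x_i\leq \neg \, e_i$ (so no exponents, hence no ``maximum of exponents'' is needed), and then assembles $e=(e_i)_{i\in I}\in {\cal B}(A)$ as a witness for $x\in S(A)$. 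Your approach instead pushes all the work into the closed-form description $S(\cdot )=\{x\mid \neg \, x\in {\cal B}(\cdot )\}$ valid under $\odot =\wedge $, and then reads off the result from $\displaystyle {\cal B}(A)=\prod _{i\in I}{\cal B}(A_i)$ and the componentwise negation. This is shorter and yields both inclusions simultaneously as a single chain of equivalences; the paper's version, on the other hand, stays self-contained within Section \ref{prods}, keeps the parallel with Lemma \ref{proddirl1} visible, and makes explicit exactly how the hypothesis $\odot =\wedge $ neutralizes the obstacle that blocks the infinite case in general.
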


\begin{proof} The fact that $\displaystyle S(A)=S(\prod _{i\in I}A_i)\subseteq \prod _{i\in I}S(A_i)$ follows from Lemma \ref{proddirl0}. Now let $\displaystyle x=(x_i)_{i\in I}\in \prod _{i\in I}S(A_i)$, that is $x_i\in S(A_i)$ for each $i\in I$, which means that, for every $i\in I$, there exists an $e_i\in {\cal B}(A_i)$ such that $e_i\in [x_i)$ and $\neg \, e_i\in [\neg \, x_i)$, that is $x_i\leq e_i$ and $\neg \, x_i\leq \neg \, e_i$, since every element of these residuated lattices is idempotent. Then $x\leq e$ and $\neg \, x\leq \neg \, e$, where $\displaystyle e=(e_i)_{i\in I}\in \prod _{i\in I}{\cal B}(A_i)={\cal B}(A)$. Hence $e\in [x)$ and $\neg \, e\in [\neg \, x)$, therefore $x\in S(A)$, so the inclusion $\displaystyle \prod _{i\in I}S(A_i)\subseteq S(A)$ holds as well. Thus $\displaystyle S(A)=S(\prod _{i\in I}A_i)=\prod _{i\in I}S(A_i)$ in this particular case.\end{proof}

\begin{proposition}
Let $n\in \N ^*$, $(A_i)_{i=1}^n$ be a family of residuated lattices, and $\displaystyle A=\prod _{i=1}^nA_i$. Then the following are equivalent:

\begin{enumerate}
\item\label{prodblp1} $A$ has BLP;
\item\label{prodblp2} for every $i\in \overline{1,n}$, $A_i$ has BLP.\end{enumerate}\label{prodblp}\end{proposition}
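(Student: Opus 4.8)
The plan is to deduce the entire equivalence from the arithmetic characterization of BLP in Proposition \ref{caractlrblp} together with the product formula for the sets $S(\cdot )$ already established in Lemma \ref{proddirl1}. This reduces the statement to an elementary fact about Cartesian products of sets, so that no new computation with the residuated lattice operations is required here; all of the genuine work has been pushed into the preceding lemmas.

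First I would recall that, by the equivalence of (\ref{caractlrblp1}) and (\ref{caractlrblp4}) in Proposition \ref{caractlrblp}, a residuated lattice has BLP if and only if its set $S(\cdot )$ equals its whole support. Hence $A$ has BLP iff $S(A)=A$, and, for each $i\in \overline{1,n}$, $A_i$ has BLP iff $S(A_i)=A_i$. Next I would invoke Lemma \ref{proddirl1}, which for the finite product gives $\displaystyle S(A)=\prod _{i=1}^nS(A_i)$. Combining these, $A$ has BLP iff $\displaystyle \prod _{i=1}^nS(A_i)=\prod _{i=1}^nA_i$.

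It then remains to observe the set-theoretic fact that closes the argument. Each factor $A_i$ is nonempty, since every residuated lattice contains $0$ and $1$, and $S(A_i)\subseteq A_i$ for every $i$ (indeed $S(A_i)$ is by definition a subset of $A_i$). For such nonempty factors, a Cartesian product $\prod _{i=1}^nS(A_i)$ fills the whole product $\prod _{i=1}^nA_i$ exactly when each factor is already full, that is, when $S(A_i)=A_i$ for every $i\in \overline{1,n}$. Reading the chain of equivalences backwards, this is precisely the condition that every $A_i$ has BLP, which gives (\ref{prodblp1})$\Leftrightarrow $(\ref{prodblp2}).

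The only point deserving care is not an obstacle so much as a place where finiteness is indispensable: the equality $S(A)=\prod _{i=1}^nS(A_i)$ rests on Lemma \ref{proddirl1}, whose nontrivial inclusion $\prod _{i=1}^nS(A_i)\subseteq S(A)$ is obtained by choosing the common exponents $j=\max \{j_i\}$ and $k=\max \{k_i\}$ over the \emph{finitely many} indices. For an infinite family one has, by Lemma \ref{proddirl0}, only the inclusion $\displaystyle S(A)\subseteq \prod _{i\in I}S(A_i)$, which would yield just one implication; so the finite case is exactly where both directions become available, and this is why the present proposition is stated for finite products.
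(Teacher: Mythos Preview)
Your proof is correct and follows essentially the same route as the paper: both invoke Proposition \ref{caractlrblp} to rewrite BLP as $S(\cdot)=\cdot$, then apply Lemma \ref{proddirl1} to obtain $S(A)=\prod_{i=1}^n S(A_i)$, and finally use the elementary fact that a product of subsets fills the whole product iff each factor is full. Your additional remarks on why finiteness matters are sound commentary but not part of the argument proper.
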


\begin{proof} From Lemma \ref{proddirl1} and Proposition \ref{caractlrblp} we obtain the equivalences: $A$ has BLP iff $S(A)=A$ iff $\displaystyle S(\prod _{i=1}^nA_i)=\prod _{i=1}^nA_i$ iff $\displaystyle \prod _{i=1}^nS(A_i)=\prod _{i=1}^nA_i$ iff $S(A_i)=A_i$ for every $i\in \overline{1,n}$ iff $A_i$ has BLP for every $i\in \overline{1,n}$.\end{proof}

\begin{proposition}
Let $(A_i)_{i\in I}$ be a non--empty family of residuated lattices all having $\odot =\wedge $, and $\displaystyle A=\prod _{i\in I}A_i$. Then the following are equivalent:

\begin{enumerate}

\item\label{idempprodblp1} $A$ has BLP;
\item\label{idempprodblp2} for every $i\in I$, $A_i$ has BLP.\end{enumerate}\label{idempprodblp}\end{proposition}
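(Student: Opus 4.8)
The plan is to mirror the proof of Proposition \ref{prodblp}, replacing the finite-product computation of $S$ with its idempotent-case counterpart. The essential input is Lemma \ref{proddirl2}, which, precisely because every $A_i$ has $\odot =\wedge $, guarantees that $\displaystyle S(A)=S(\prod _{i\in I}A_i)=\prod _{i\in I}S(A_i)$ even when $I$ is infinite. Combined with the characterization of BLP from Proposition \ref{caractlrblp}, namely that a residuated lattice $B$ has BLP iff $S(B)=B$, this reduces the whole equivalence to a statement about products of the sets $S(A_i)\subseteq A_i$.

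First I would record the chain of equivalences: $A$ has BLP iff $S(A)=A$ (Proposition \ref{caractlrblp}), iff $\displaystyle \prod _{i\in I}S(A_i)=\prod _{i\in I}A_i$ (Lemma \ref{proddirl2}), after which it remains to see that this product equality is equivalent to $S(A_i)=A_i$ for every $i\in I$, which by Proposition \ref{caractlrblp} again is equivalent to every $A_i$ having BLP. Thus the argument is almost purely formal once Lemma \ref{proddirl2} is in hand.

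The one point that needs slightly more care than in the finite case is the set-theoretic equivalence $\displaystyle \prod _{i\in I}S(A_i)=\prod _{i\in I}A_i$ iff $(\forall \, i\in I)\, S(A_i)=A_i$. The implication from right to left is immediate. For the converse, I would fix $i\in I$ and an arbitrary $a\in A_i$, and build the element $x=(x_j)_{j\in I}$ with $x_i=a$ and $x_j=1$ for $j\neq i$; since $1\in {\cal B}(A_j)\subseteq S(A_j)$ for each $j$ (Proposition \ref{colectare}, (\ref{colectare1})), this $x$ lies in $\displaystyle \prod _{i\in I}A_i$, so the hypothesis forces $\displaystyle x\in \prod _{i\in I}S(A_i)$, whence $a=x_i\in S(A_i)$. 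As $S(A_i)\subseteq A_i$ always holds, this yields $S(A_i)=A_i$. I do not expect a genuine obstacle here, but this is the only step where the infinite index set is actually felt (note that the explicit choice $x_j=1$ sidesteps any appeal to choice for non-emptiness of the product), so it is worth spelling out rather than silently importing the finite-product argument of Proposition \ref{prodblp}.
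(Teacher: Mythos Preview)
Your proof is correct and follows essentially the same route as the paper: both arguments reduce the equivalence to Lemma \ref{proddirl2} together with the characterization $B$ has BLP iff $S(B)=B$ from Proposition \ref{caractlrblp}, and then pass through the set-theoretic equivalence $\prod _{i\in I}S(A_i)=\prod _{i\in I}A_i$ iff $S(A_i)=A_i$ for all $i$. The only difference is that you spell out this last step explicitly (via the element with $1$'s in all but one coordinate), whereas the paper states it without comment.
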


\begin{proof} From Lemma \ref{proddirl2} and Proposition \ref{caractlrblp} we obtain the equivalences: $A$ has BLP iff $S(A)=A$ iff $\displaystyle S(\prod _{i\in I}A_i)=\prod _{i\in I}A_i$ iff $\displaystyle \prod _{i\in I}S(A_i)=\prod _{i\in I}A_i$ iff $S(A_i)=A_i$ for every $i\in I$ iff $A_i$ has BLP for every $i\in I$.\end{proof}

\begin{proposition}
Let $(A_i)_{i\in I}$ be a non--empty family of residuated lattices, and $\displaystyle A=\prod _{i\in I}A_i$. If $A$ has BLP, then, for every $i\in I$, $A_i$ has BLP.\label{prodblprez}\end{proposition}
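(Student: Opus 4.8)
The plan is to reduce everything to the arithmetic criterion $S(\cdot)=(\cdot)$ supplied by Proposition \ref{caractlrblp}, and then to exploit the single inclusion recorded in Lemma \ref{proddirl0}, which holds for \emph{arbitrary} non--empty families and is therefore exactly what is needed here. Note that the reverse inclusion is only available in the finite case of Lemma \ref{proddirl1} or the idempotent case of Lemma \ref{proddirl2}, so I will deliberately avoid relying on it.

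First I would assume that $A=\prod _{i\in I}A_i$ has BLP. By Proposition \ref{caractlrblp} this is equivalent to $S(A)=A$, and then Lemma \ref{proddirl0} gives $A=S(A)\subseteq \prod _{i\in I}S(A_i)$. Combined with the trivial inclusion $S(A_i)\subseteq A_i$, valid for each $i$, this already contains all the information we need; it remains only to decode it one coordinate at a time.

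Next I would fix an index $i_0\in I$ and an arbitrary $x\in A_{i_0}$, and produce a witnessing tuple. Since every residuated lattice contains $0$ and $1$ and is hence non--empty, I may choose some $a_i\in A_i$ for each $i\neq i_0$ (for instance $a_i=1$) and set $a_{i_0}=x$, obtaining an element $a=(a_i)_{i\in I}\in A$. Then $a\in A=S(A)\subseteq \prod _{i\in I}S(A_i)$, so reading off the $i_0$--th coordinate yields $x=a_{i_0}\in S(A_{i_0})$. As $x$ was arbitrary, $A_{i_0}\subseteq S(A_{i_0})$, whence $S(A_{i_0})=A_{i_0}$, and Proposition \ref{caractlrblp} concludes that $A_{i_0}$ has BLP; since $i_0$ was arbitrary, every factor has BLP.

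I do not expect any genuine obstacle here: the only point requiring care is precisely that $I$ may be infinite, which rules out Lemma \ref{proddirl1} and forces us to work with the one--sided inclusion of Lemma \ref{proddirl0}. The coordinate--extraction step circumvents this cleanly, since a single nonzero coordinate can always be completed to a full element of the product. As an alternative route, one could instead observe that each projection $pr_i\colon A\to A_i$ is surjective with induced filter $F_i=\{a\in A\mid a_i=1\}$, so that $A/F_i\cong A_i$, and then apply Corollary \ref{corimp} to conclude that the quotient $A_i$ inherits BLP from $A$; but the $S(A)$--based argument is shorter and keeps the proof within the machinery developed in this section.
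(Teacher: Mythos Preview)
Your proof is correct and follows essentially the same approach as the paper: both arguments invoke Proposition \ref{caractlrblp} to translate BLP into $S(A)=A$, then use the one--sided inclusion of Lemma \ref{proddirl0} to obtain $\prod_{i\in I}A_i\subseteq \prod_{i\in I}S(A_i)$, and finally read off $S(A_i)=A_i$ coordinatewise. The only difference is cosmetic: the paper compresses your coordinate--extraction step into the single implication $\prod_{i\in I}S(A_i)=\prod_{i\in I}A_i\Rightarrow S(A_i)=A_i$ for all $i$, which is exactly what your tuple construction spells out.
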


\begin{proof} From Lemma \ref{proddirl0} and Proposition \ref{caractlrblp} we obtain: if $A$ has BLP, then $S(A)=A$, thus $\displaystyle \prod _{i\in I}A_i=A=S(A)=S(\prod _{i\in I}A_i)\subseteq \prod _{i\in I}S(A_i)\subseteq \prod _{i\in I}A_i$, hence $\displaystyle \prod _{i\in I}S(A_i)=\prod _{i\in I}A_i$, therefore $S(A_i)=A_i$ for every $i\in I$, so $A_i$ has BLP for every $i\in I$.\end{proof}

Proposition \ref{prodblp} could have been deduced from the following stronger result which holds in this finite case:

\begin{proposition}
Let $n\in \N ^*$, $(A_i)_{i=1}^n$ be a (finite non--empty) family of residuated lattices, $\displaystyle A=\prod _{i=1}^nA_i$ and, for each $i\in \overline{1,n}$, let $pr_i:A\rightarrow A_i$ be the canonical projection. Let $F$ be an arbitrary filter of $A$ and, for every $i\in \overline{1,n}$, let us denote by $F_i=pr_i(F)$ (which, as we know from Section \ref{preliminaries}, is a filter of $A_i$). Then:

\begin{enumerate}
\item\label{strongprodblp0} for each $i\in \overline{1,n}$, the Boolean morphism ${\cal B}(pr_i):{\cal B}(A)\rightarrow {\cal B}(A_i)$ is surjective;
\item\label{strongprodblp1} the residuated lattices $A/F$ and $\displaystyle \prod _{i=1}^nA_i/F_i$ are isomorphic;
\item\label{strongprodblp2} $F$ has BLP (in $A$) iff $F_i$ has BLP (in $A_i$) for every $i\in \overline{1,n}$;
\item\label{strongprodblp3} the Boolean morphism ${\cal B}(p_F)$ is injective iff the Boolean morphism ${\cal B}(p_{F_i})$ is injective for every $i\in \overline{1,n}$.\end{enumerate}\label{strongprodblp}\end{proposition}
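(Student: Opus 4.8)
The plan is to reduce everything to part~(\ref{strongprodblp1}) together with the fact, recorded at the start of this section, that for a finite product every filter $F$ of $A$ satisfies $\displaystyle F=\prod _{i=1}^nF_i$, and the fact that ${\cal B}$ preserves finite products. Once those are in place, the single Boolean morphism ${\cal B}(p_F)$ becomes identified with the product of the morphisms ${\cal B}(p_{F_i})$, and both surjectivity (for BLP) and injectivity pass through the product factor by factor.

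For (\ref{strongprodblp0}) I would argue directly: since $\displaystyle {\cal B}(A)=\prod _{i=1}^n{\cal B}(A_i)$ and ${\cal B}(pr_i)$ is the restriction of $pr_i$ to ${\cal B}(A)$, which is simply the projection onto the $i$-th factor ${\cal B}(A_i)$, it is evidently surjective. Concretely, given $b\in {\cal B}(A_i)$, the tuple $e\in {\cal B}(A)$ whose $i$-th component is $b$ and whose other components are $1$ satisfies ${\cal B}(pr_i)(e)=pr_i(e)=b$. For (\ref{strongprodblp1}), using $\displaystyle F=\prod _{i=1}^nF_i$ and the fact that every operation of $A$ — in particular $\leftrightarrow $ — is computed componentwise, I would note that for $a,b\in A$ we have $a\leftrightarrow b\in F$ iff $a_i\leftrightarrow b_i\in F_i$ for every $i$, i.e. $a/F=b/F$ iff $a_i/F_i=b_i/F_i$ for every $i$. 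Hence the assignment $a/F\mapsto (a_i/F_i)_{i=1}^n$ is well defined and injective; it is plainly surjective and, since both the quotient and the product structures are canonical and componentwise, it is a residuated lattice morphism. This yields the desired isomorphism $\displaystyle \phi :A/F\to \prod _{i=1}^nA_i/F_i$.

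For (\ref{strongprodblp2}) and (\ref{strongprodblp3}) I would treat both at once. For each $i$ the $i$-th component of $\phi $, namely $\psi _i(a/F)=a_i/F_i$, makes the square $\psi _i\circ p_F=p_{F_i}\circ pr_i$ commute. Applying the functor ${\cal B}$ and using $\displaystyle {\cal B}(A)=\prod _{i=1}^n{\cal B}(A_i)$ together with the isomorphism $\phi $ of (\ref{strongprodblp1}), which gives $\displaystyle {\cal B}(A/F)\cong \prod _{i=1}^n{\cal B}(A_i/F_i)$, I would check that under these identifications ${\cal B}(p_F)$ is exactly the product morphism $(e_i)_i\mapsto (e_i/F_i)_i=({\cal B}(p_{F_i})(e_i))_i$. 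Since a map between finite products that is defined componentwise is surjective (respectively injective) iff each of its factors is, and since $F$ has BLP iff ${\cal B}(p_F)$ is surjective, both equivalences follow immediately.

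The only genuine content lies in the identification $\displaystyle F=\prod _{i=1}^nF_i$ (already available in the finite case) and in verifying that ${\cal B}(p_F)$ decomposes as $\displaystyle \prod _{i=1}^n{\cal B}(p_{F_i})$; the latter is a direct computation once the squares are seen to commute. I anticipate no real obstacle here, since the statement amounts to the assertion that both ${\cal B}$ and ``quotient by $F$'' commute with finite products, so the crux is bookkeeping rather than any delicate estimate. The one point demanding care is ensuring that the isomorphism of (\ref{strongprodblp1}) genuinely intertwines $p_F$ with the family $(p_{F_i})_{i=1}^n$, because the whole of (\ref{strongprodblp2}) and (\ref{strongprodblp3}) rests on that compatibility; this is precisely why I would establish the commuting squares $\psi _i\circ p_F=p_{F_i}\circ pr_i$ explicitly before invoking functoriality.
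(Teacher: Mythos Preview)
Your proposal is correct. For parts (\ref{strongprodblp0}) and (\ref{strongprodblp1}) you argue exactly as the paper does: the Boolean center of a product is the product of the Boolean centers, and the isomorphism $A/F\cong\prod_{i=1}^nA_i/F_i$ follows from $F=\prod_{i=1}^nF_i$ together with the componentwise computation of $\leftrightarrow$.

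For (\ref{strongprodblp2}) and (\ref{strongprodblp3}) your route is a mild but genuine streamlining of the paper's. You make the single observation that, under the identifications ${\cal B}(A)=\prod_i{\cal B}(A_i)$ and ${\cal B}(A/F)\cong\prod_i{\cal B}(A_i/F_i)$, the morphism ${\cal B}(p_F)$ is literally the product $\prod_i{\cal B}(p_{F_i})$, so surjectivity and injectivity both transfer componentwise in one stroke. The paper instead handles the two directions of (\ref{strongprodblp2}) separately---a commuting-square argument for ``$\Rightarrow$'' (using (\ref{strongprodblp0}) applied to the quotient product) and an explicit element chase for ``$\Leftarrow$''---and proves (\ref{strongprodblp3}) by a different device entirely, invoking the characterization ${\cal B}(p_F)$ injective $\Leftrightarrow {\cal B}(A)\cap F=\{1\}$ from Proposition~\ref{filtcuinj} and decomposing that intersection componentwise. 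Your unified argument is cleaner and avoids the detour through Proposition~\ref{filtcuinj}; the paper's approach has the minor advantage of exhibiting (\ref{strongprodblp3}) as an instance of a general injectivity criterion that is useful elsewhere.
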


\begin{proof} (\ref{strongprodblp0}) $\displaystyle A=\prod _{i=1}^nA_i$, hence $\displaystyle {\cal B}(A)=\prod _{i=1}^n{\cal B}(A_i)$, thus, for each $i\in \overline{1,n}$, ${\cal B}(pr_i)$ is the canonical projection from the Boolean algebra ${\cal B}(A)$ to the Boolean algebra ${\cal B}(A_i)$, which is a surjective Boolean morphism.

\noindent (\ref{strongprodblp1}) For all $x\in A$ and all $i\in \overline{1,n}$, we shall denote $x_i=pr_i(x)\in A_i$; so $x=(x_1,\ldots ,x_n)$ for all $x\in A$. Let us define $\displaystyle \psi :A/F\rightarrow \prod _{i=1}^nA_i/F_i$ by: for every $x\in A$, $\psi (x/F)=(x_1/F_1,\ldots ,x_n/F_n)$. Then, since $\displaystyle F=\prod _{i=1}^nF_i$ and, for every $x,y\in A$, $x\leftrightarrow y=(x_1\leftrightarrow y_1,\ldots ,x_n\leftrightarrow y_n)$, it follows that $\psi $ is well defined and injective, because, for all $x,y\in A$, these equivalences hold: $x/F=y/F$ iff $x\leftrightarrow y\in F$ iff $\displaystyle (x_1\leftrightarrow y_1,\ldots ,x_n\leftrightarrow y_n)\in \prod _{i=1}^nF_i$ iff, for each $i\in \overline{1,n}$, $x_i\leftrightarrow y_i\in F_i$ iff, for each $i\in \overline{1,n}$, $x_i/F_i=y_i/F_i$ iff $(x_1/F_1,\ldots ,x_n/F_n)=(y_1/F_1,\ldots ,y_n/F_n)$. Clearly, $\psi $ is a surjective morphism of residuated lattices. Thus $\psi $ is a residuated lattice isomorphism.

\noindent (\ref{strongprodblp2}) ``$\Rightarrow $``: According to (\ref{strongprodblp1}), $\displaystyle \psi :A/F\rightarrow \prod _{i=1}^nA_i/F_i$ is a residuated lattice isomorphism, hence $\displaystyle {\cal B}(\psi ):{\cal B}(A/F)\rightarrow {\cal B}(\prod _{i=1}^nA_i/F_i)$ is a Boolean isomorphism. Let us denote, for every $k\in \overline{1,n}$, by $\displaystyle pr^{\prime }_k:\prod _{i=1}^nA_i/F_i\rightarrow A_k/F_k$ the canonical projection. Then, for every $i\in \overline{1,n}$, $\displaystyle pr^{\prime }_i\circ \psi :A/F\rightarrow A_i/F_i$ is a residuated lattice morphism, hence ${\cal B}(pr^{\prime }_i\circ \psi )={\cal B}(pr^{\prime }_i)\circ {\cal B}(\psi ):A/F\rightarrow A_i/F_i$ is a Boolean morphism; moreover, it is a surjective Boolean morphism, because ${\cal B}(\psi )$ is a Boolean isomorphism and ${\cal B}(pr^{\prime }_i)$ is a surjective Boolean morphism, according to (\ref{strongprodblp0}) applied to $\displaystyle \prod _{j=1}^nA_j/F_j$ and $A_i/F_i$ instead of $A$ and $A_i$, respectively. In this implication, $F$ has BLP, thus ${\cal B}(p_F)$ is a surjective Boolean morphism. Therefore ${\cal B}(p_F)\circ {\cal B}(pr^{\prime }_i\circ \psi )$ is a surjective Boolean morphism, for any $i\in \overline{1,n}$.

Let $i\in \overline{1,n}$, arbitrary. We have the following commutative diagram in the category of Boolean algebras:

\begin{center}
\begin{picture}(240,67)(0,0)
\put(50,50){${\cal B}(A)$}
\put(50,5){${\cal B}(A_i)$}
\put(120,50){${\cal B}(A/F)$}
\put(120,5){${\cal B}(A_i/F_i)$}
\put(74,53){\vector(1,0){43}}
\put(77,8){\vector(1,0){40}}
\put(83,57){${\cal B}(p_F)$}
\put(83,13){${\cal B}(p_{F_i})$}
\put(60,46){\vector(0,-1){30}}
\put(135,46){\vector(0,-1){30}}
\put(31,29){${\cal B}(pr_i)$}
\put(138,29){${\cal B}(pr^{\prime }_i)\circ {\cal B}(\psi )$}
\end{picture}
\end{center}

So ${\cal B}(p_{F_i})\circ {\cal B}(pr_i)={\cal B}(p_F)\circ {\cal B}(pr^{\prime }_i\circ \psi )$, which is surjective, hence ${\cal B}(p_{F_i})$ is surjective, which means that $F_i$ has BLP.

\noindent ``$\Leftarrow $``: In this implication, the hypothesis is that, for all $i\in \overline{1,n}$, $F_i$ has BLP, that is ${\cal B}(A_i/F_i)={\cal B}(A_i)/F_i$. Let $x\in A$, such that $x/F\in {\cal B}(A/F)$. Then, keeping all the notations above, $\displaystyle {\cal B}(\psi )(x/F)=\psi (x/F)=(x_1/F_1,\ldots ,x_n/F_n)\in \prod _{i=1}^n{\cal B}(A_i/F_i)=\prod _{i=1}^n{\cal B}(A_i)/F_i$, thus, for all $i\in \overline{1,n}$, $x_i/F_i\in {\cal B}(A_i)/F_i$, so there exists $e_i\in {\cal B}(A_i)$ such that $x_i/F_i=e_i/F_i$. Now the injectivity of $\psi $ shows that $x/F=e/F$, where $\displaystyle e=(e_1,\ldots ,e_n)\in \prod _{i=1}^n{\cal B}(A_i)={\cal B}(A)$, therefore $x/F=e/F\in {\cal B}(A)/F$, hence ${\cal B}(A/F)\subseteq {\cal B}(A)/F$, thus ${\cal B}(A/F)={\cal B}(A)/F$ by Lemma \ref{lifted}, (\ref{lifted4}), which means that $F$ has BLP.

\noindent (\ref{strongprodblp3}) Since $\displaystyle {\cal B}(A)=\prod _{i=1}^n{\cal B}(A_i)$ and $\displaystyle F=\prod _{i=1}^nF_i$, the following equivalences hold, according to Proposition \ref{filtcuinj}: ${\cal B}(p_F)$ is injective iff ${\cal B}(A)\cap F=\{1\}$ iff, for all $i\in \overline{1,n}$, ${\cal B}(A_i)\cap F_i=\{1\}$ iff, for all $i\in \overline{1,n}$, ${\cal B}(p_{F_i})$ is injective.\end{proof}

\begin{remark}
Regarding the proposition above, one may ask to what extent it can be generalized to arbitrary non--empty families of residuated lattices. A quick look at its proof reveals that the following hold in the general case: if $(A_i)_{i\in I}$ is a non--empty family of residuated lattices, $\displaystyle A=\prod _{i\in I}A_i$, for all $i\in I$, $pr_i:A\rightarrow A_i$ is the canonical projection, $F$ is a filter of $A$, for all $i\in I$, $pr_i(F)=F_i$, which is a filter of $A_i$, and we define $\displaystyle \psi :A/F\rightarrow \prod _{i\in I}A_i/F_i$ by: for all $\displaystyle x=(x_i)_{i\in I}\in A=\prod _{i\in I}A_i$ (with $x_i\in A_i$ for each $i\in I$), $\psi (x/F)=(x_i/F_i)_{i\in I}$, then:

\begin{enumerate}
\item for each $i\in I$, the Boolean morphism ${\cal B}(pr_i):{\cal B}(A)\rightarrow {\cal B}(A_i)$ is surjective;
\item $\psi $ is well defined and it is a surjective residuated lattice morphism;
\item if, for every $i\in I$, $F_i$ has BLP, then the Boolean morphism ${\cal B}(\psi )$ is surjective;
\item if, for every $i\in I$, ${\cal B}(p_{F_i})$ is injective, then ${\cal B}(p_F)$ is injective.\end{enumerate}\label{togen}\end{remark}

\begin{corollary}
Let $A$ be an arbitrary residuated lattice. Then, for every $e\in {\cal B}(A)$, the following are equivalent:

\begin{enumerate}
\item $A$ has BLP;
\item the residuated lattices $[e)$ and $[\neg \, e)$ have BLP.
\end{enumerate}\end{corollary}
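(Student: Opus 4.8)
The plan is to reduce the statement to the finite--product result Proposition \ref{prodblp} by establishing the canonical decomposition $A\cong [e)\times [\neg \, e)$ of a residuated lattice along a Boolean element $e$. Once this isomorphism is in hand, Proposition \ref{prodblp}, applied to the two--fold product $[e)\times [\neg \, e)$, immediately gives that $A$ has BLP iff both factors $[e)$ and $[\neg \, e)$ have BLP, which is exactly the desired equivalence; note that $[e)$ and $[\neg \, e)$ carry their residuated lattice structures recalled after \cite[Proposition $2.16$]{eu3}.

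To build the decomposition I would use the two canonical maps $\pi _e,\pi _{\neg \, e}:A\to [e),[\neg \, e)$ given by $\pi _e(x)=x\vee e$ and $\pi _{\neg \, e}(x)=x\vee \neg \, e$, and assemble them into $\gamma =(\pi _e,\pi _{\neg \, e}):A\to [e)\times [\neg \, e)$. Injectivity is immediate: $\gamma (x)=\gamma (y)$ forces $x\vee e=y\vee e$ and $x\vee \neg \, e=y\vee \neg \, e$, whence $x\leftrightarrow y$ lies in both $[e)$ and $[\neg \, e)$, hence in $[e)\cap [\neg \, e)=[e\vee \neg \, e)=[1)=\{1\}$, so $x=y$ by Lemma \ref{aritmlr}, (\ref{aritmlr2}). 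For surjectivity, given $(u,v)\in [e)\times [\neg \, e)$, that is $e\leq u$ and $\neg \, e\leq v$, I would check that $x=(u\wedge \neg \, e)\vee (v\wedge e)$ satisfies $x\vee e=u$ and $x\vee \neg \, e=v$, using $e\vee \neg \, e=1$ and $e\wedge \neg \, e=0$ from Lemma \ref{aritmcbool}, (\ref{aritmcbool1}). A cleaner variant of the same idea is to work instead with the manifestly morphic map $A\to A/[\neg \, e)\times A/[e)$ (a product of canonical surjections) and then transport along the isomorphisms $[e)\cong A/[\neg \, e)$ and $[\neg \, e)\cong A/[e)$.

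That $\gamma $ is a residuated lattice morphism, and that the surjectivity computation goes through, both rest on the distributive behaviour of Boolean elements: for every $e\in {\cal B}(A)$ and all $p,q\in A$ one has $e\vee (p\wedge q)=(e\vee p)\wedge (e\vee q)$, which follows from Lemma \ref{aritmcbool}, (\ref{aritmcbool4}) (namely $e\vee x=\neg \, e\rightarrow x$) together with the standard residuated identity $z\rightarrow (p\wedge q)=(z\rightarrow p)\wedge (z\rightarrow q)$; similarly $x\odot e=x\wedge e$ and the distributivity of $\odot $ over $\vee $ handle the multiplication, and a short computation handles the relative implication $\rightarrow _e$. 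I expect the main obstacle to be exactly this verification of the isomorphism $A\cong [e)\times [\neg \, e)$: the lattice--theoretic identities it needs all reduce to the neutrality of Boolean elements, but they must be applied with care, since the underlying lattice of $A$ need not be distributive, so one cannot distribute freely and must route every such step through Lemma \ref{aritmcbool}. With the isomorphism secured, Proposition \ref{prodblp} finishes the proof.
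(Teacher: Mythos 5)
Your proposal is correct and takes essentially the same route as the paper: the paper's proof also reduces the statement to Proposition \ref{prodblp} via the decomposition $A\cong [e)\times [\neg \, e)$, except that it simply notes $e\wedge \neg \, e=0$ and $e\vee \neg \, e=1$ (Lemma \ref{aritmcbool}, (\ref{aritmcbool1})) and cites \cite[Proposition $2.18$]{eu3} for that isomorphism, whereas you verify it by hand. Your verification is sound --- the maps $x\mapsto x\vee e$ and $x\mapsto x\vee \neg \, e$, the surjectivity witness $(u\wedge \neg \, e)\vee (v\wedge e)$, and the routing of all distributivity steps through $e\vee x=\neg \, e\rightarrow x$ and the distributivity of $\odot $ over $\vee $ all go through --- so the only difference is that you re-prove a decomposition the paper treats as known.
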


\begin{proof} Let $e\in {\cal B}(A)$. By Lemma \ref{aritmcbool}, (\ref{aritmcbool1}), $e\wedge \neg \, e=0$ and $e\vee \neg \, e=1$. Then \cite[Proposition $2.18$]{eu3} ensures us that $A$ is isomorphic to the direct product $[e)\times [\neg \, e)$. Now apply Proposition \ref{prodblp}.\end{proof}

\section{Classes of Residuated Lattices with Boolean Lifting Property}
\label{clsBLP}

In this section we introduce and study the conditions $(\star )$ and $(\star \star )$, which turn out to be a strengthening and a weakening of the BLP, respectively, and which open new ways of approaching the study of the BLP. These two conditions share some properties with the BLP (such as Propositions \ref{p1star}, \ref{p2star} and \ref{p2starbis} below) and appear to also differ by some properties from the BLP. They do, however, coincide with the BLP in some remarkable particular cases such as the case when $\odot =\wedge $. This section also contains representation theorems for semilocal and maximal residuated lattices with BLP, and a proof for the fact that local residuated lattices coincide with quasi--local residuated lattices whose Boolean center is equal to $\{0,1\}$. Many of the theoretical results in this section turn out to produce surprising and useful applications. 

Throughout this section, unless mentioned otherwise, $A$ will be an arbitrary residuated lattice.

Let us consider the following conditions, whose notations we shall keep in what follows:

\begin{center}
\begin{tabular}{rl}

$(\star )$ & for all $x\in A$, there exist $u\in {\rm Rad}(A)$ and $e\in {\cal B}(A)$ such that $[x)=[u)\vee [e)$;\\ 
$(\star \star )$ & for all $x\in A$, there exist $u\in A$ and $e\in {\cal B}(A)$ such that $\neg \, u\in N(A)$ and $[x)=[u)\vee [e)$.
\end{tabular}
\end{center}

Clearly, the trivial residuated lattice satisfies $(\star )$.

\begin{remark}
Corollary \ref{radnegnil} shows that $(\star )$ implies $(\star \star )$, that is: if a residuated lattice $A$ satisfies $(\star )$, then $A$ satisfies $(\star \star )$.\label{rem1star}\end{remark}

\begin{remark}
If $A$ has all elements idempotent (that is if $\odot =\wedge $ in $A$), then conditions $(\star )$ and $(\star \star )$ are equivalent in $A$, that is: $A$ satisfies $(\star )$ iff $A$ satisfies $(\star \star )$. The direct implication has been proven in Remark \ref{rem1star}. For the converse implication, just notice that, as shown by Lemma \ref{radunit}, idempotent elements whose negation is nilpotent belong to ${\rm Rad}(A)$.\label{rem2star}\end{remark}

\begin{proposition}
\begin{enumerate}
\item\label{propstar1} $(\star )\Rightarrow BLP$, that is: if $A$ satisfies $(\star )$, then $A$ has BLP. 

\item\label{propstar2} $BLP\Rightarrow (\star \star )$, that is: if $A$ has BLP, then $A$ satisfies $(\star \star )$.

\item\label{propstar3} If $A$ has $\odot =\wedge $, then $(\star )\Leftrightarrow BLP\Leftrightarrow (\star \star )$ in $A$, that is: $A$ satisfies $(\star )$ iff $A$ has BLP iff $A$ satisfies $(\star \star )$.\end{enumerate}\label{propstar}\end{proposition}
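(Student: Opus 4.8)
The plan is to establish the two genuine implications $(\star)\Rightarrow \mathrm{BLP}$ and $\mathrm{BLP}\Rightarrow(\star\star)$ directly, working through the arithmetic characterization ``$A$ has $\mathrm{BLP}$ iff $S(A)=A$'' from Proposition \ref{caractlrblp}, and then to obtain (\ref{propstar3}) formally by closing a cycle of implications with Remark \ref{rem2star}. Throughout I would lean on the description $S(A)=\{x\in A\mid (\exists\, e\in {\cal B}(A))\, e\in [x)\text{ and }\neg \, e\in [\neg \, x)\}$ of Remark \ref{rsa}, together with the filter identities $[a)\vee [b)=[a\odot b)=[a\wedge b)$ and the order--reversal $a\leq b$ iff $[b)\subseteq [a)$.

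For (\ref{propstar1}), fix $x\in A$ and take $u\in {\rm Rad}(A)$, $e\in {\cal B}(A)$ with $[x)=[u)\vee [e)=[u\odot e)$, witnessing $(\star)$. I claim $e$ itself is the Boolean element placing $x$ in $S(A)$. That $e\in [x)$ is immediate, since $[e)\subseteq [u)\vee [e)=[x)$. The substantive point is $\neg \, e\in [\neg \, x)$. From $x\in [x)=[u\odot e)$ we get $(u\odot e)^p\leq x$ for some $p\in \N ^*$, which, $e$ being idempotent, reads $u^p\odot e\leq x$. Negating (Lemma \ref{aritmlr}, (\ref{aritmlr6})) and then using Lemma \ref{aritmlr}, (\ref{aritmlr12}) and (\ref{aritmlr4}), I would deduce $\neg \, x\odot e\leq \neg \, (u^p)$. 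Since ${\rm Rad}(A)$ is a filter, $u^p\in {\rm Rad}(A)$, so Corollary \ref{radnegnil} makes $\neg \, (u^p)$ nilpotent, say $(\neg \, (u^p))^k=0$. Finally, idempotency of $e$ yields $(\neg \, x)^k\odot e=(\neg \, x\odot e)^k\leq (\neg \, (u^p))^k=0$, hence $(\neg \, x)^k\leq \neg \, e$ by Lemma \ref{aritmlr}, (\ref{aritmlr7}), i.e.\ $\neg \, e\in [\neg \, x)$. Thus $x\in S(A)$; as $x$ was arbitrary, $S(A)=A$ and $A$ has $\mathrm{BLP}$.

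For (\ref{propstar2}), assume $\mathrm{BLP}$, so $S(A)=A$; fix $x$ and choose $e\in {\cal B}(A)$ with $e\in [x)$ and $\neg \, e\in [\neg \, x)$, that is $x^m\leq e$ and $(\neg \, x)^n\leq \neg \, e$ for some $m,n\in \N ^*$. I would set $u=x\vee \neg \, e$ and verify both clauses of $(\star\star)$. Using distributivity of $\odot $ over $\vee $ in $(A,\vee ,\odot ,0,1)$, together with $e\odot \neg \, e=0$ and $x\odot e=x\wedge e$ (Lemma \ref{aritmcbool}), one computes $u\odot e=x\wedge e$, whence $[u)\vee [e)=[x\wedge e)=[x)\vee [e)=[x)$, the last equality because $x^m\leq e$ forces $[e)\subseteq [x)$. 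For nilpotency, Lemma \ref{aritmlr}, (\ref{aritmlr13}) and Lemma \ref{aritmcbool}, (\ref{aritmcbool1}) give $\neg \, u=\neg \, x\wedge e$, so $(\neg \, u)^n=(\neg \, x)^n\odot e\leq \neg \, e\odot e=0$, i.e.\ $\neg \, u\in N(A)$. This exhibits $(\star\star)$ for $x$.

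Part (\ref{propstar3}) is then purely formal: (\ref{propstar1}) gives $(\star)\Rightarrow \mathrm{BLP}$ and (\ref{propstar2}) gives $\mathrm{BLP}\Rightarrow (\star\star)$ unconditionally, while Remark \ref{rem2star} supplies $(\star\star)\Rightarrow (\star)$ in the presence of $\odot =\wedge $; chaining these closes the loop $(\star)\Rightarrow \mathrm{BLP}\Rightarrow (\star\star)\Rightarrow (\star)$, so the three conditions coincide when $\odot =\wedge $. I expect the one genuinely delicate step to be proving $\neg \, e\in [\neg \, x)$ in part (\ref{propstar1}): the hypothesis controls $u$ only through the radical, whereas what is needed is nilpotency attached to $\neg \, x$, and the bridge is the inequality $u^p\odot e\leq x$ combined with the passage $\neg \, x\odot e\leq \neg \, (u^p)$ and the idempotency identity $(\neg \, x)^k\odot e=(\neg \, x\odot e)^k$. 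Everything else reduces to the filter identities and the Boolean arithmetic of Lemmas \ref{aritmlr} and \ref{aritmcbool}.
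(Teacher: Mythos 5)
Your proof is correct, and its overall architecture coincides with the paper's: all three parts go through the characterization of BLP as $S(A)=A$ (Proposition \ref{caractlrblp}), your part (\ref{propstar2}) uses the very same witness $u=x\vee \neg \, e$ with the same computations, and part (\ref{propstar3}) is the same formal chaining via Remark \ref{rem2star}. The one genuine difference is in part (\ref{propstar1}), in how the nilpotency coming from the radical is transferred to $\neg \, x$. The paper passes from $u^n\odot e\leq x$ to $\neg \, x\leq \neg \, e\vee \neg \, u^n$ (via Lemma \ref{aritmlr}, (\ref{aritmlr12}), and Lemma \ref{aritmcbool}, (\ref{aritmcbool4})) and then kills $e\odot (\neg \, x)^{k_n}$ by bounding it with $e\odot (\neg \, e\vee \neg \, u^n)^{k_n}$ and expanding that power of a join through distributivity of $\odot $ over $\vee $ --- a somewhat laborious multinomial computation. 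You instead keep $e$ on the multiplicative side throughout: residuation turns $\neg \, x\leq e\rightarrow \neg \, u^p$ into $\neg \, x\odot e\leq \neg \, u^p$, and then idempotency of $e$ gives $(\neg \, x)^k\odot e=(\neg \, x\odot e)^k\leq (\neg \, u^p)^k=0$, whence $(\neg \, x)^k\leq \neg \, e$ by Lemma \ref{aritmlr}, (\ref{aritmlr7}). This is cleaner: it reaches the same inequality $\neg \, e\in [\neg \, x)$ while avoiding the distributive expansion entirely. Your appeal to Corollary \ref{radnegnil} applied to $u^p\in {\rm Rad}(A)$ (legitimate, since ${\rm Rad}(A)$ is a filter, hence closed under $\odot $) in place of the paper's direct use of Lemma \ref{radunit} with exponent $n$ is an equivalent repackaging of the same fact, so nothing is lost there.
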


\begin{proof} (\ref{propstar1}) Assume that $A$ satisfies $\star $ and let $x\in A$, arbitrary. Then there exist $u\in {\rm Rad}(A)$ and $e\in {\cal B}(A)$ such that $[x)=[u)\vee [e)=[u\odot e)$, thus $u\odot e\in [x)$ and $x\in [u\odot e)$, that is there exist $m,n\in \N ^*$ such that $x^m\leq u\odot e$ and $u^n\odot e=u^n\odot e^n=(u\odot e)^n\leq x$; we have applied Lemma \ref{aritmcbool}, (\ref{aritmcbool2}). Since $x^m\leq u\odot e\leq e$, we have $e\in [x)$. By Lemma \ref{aritmlr}, (\ref{aritmlr6}) and (\ref{aritmlr12}), and Lemma \ref{aritmcbool}, (\ref{aritmcbool4}), $u^n\odot e\leq x$ implies $\neg \, x\leq \neg \, (u^n\odot e)=\neg \, \neg \, e\rightarrow \neg \, u^n=\neg \, e\vee \neg \, u^n$. Since $u\in {\rm Rad}(A)$, by Lemma \ref{radunit}, there exists $k_n\in \N ^*$ such that $(\neg \, u^n)^{k_n}=0$. Therefore, by Lemma \ref{aritmlr}, (\ref{aritmlr1}) and (\ref{aritmlr7}), the distributivity of $\odot $ with respect to $\vee $, Lemma \ref{aritmcbool}, (\ref{aritmcbool1}) and (\ref{aritmcbool2}), and the choice of $k_n$, $e\odot (\neg \, x)^{k_n}\leq e\odot (\neg \, e\vee \neg \, u^n)^{k_n}=e\odot [(\neg \, e)^{k_n}\vee ((\neg \, e)^{k_n-1}\odot \neg \, u^n)\vee \ldots \vee (\neg \, e\odot (\neg \, u^n)^{k_n-1})\vee (\neg \, u^n)^{k_n}]=e\odot [\neg \, e\vee (\neg \, e\odot \neg \, u^n)\vee \ldots \vee (\neg \, e\odot (\neg \, u^n)^{k_n-1})\vee 0]=e\odot \neg \, e\odot [1\vee \neg \, u^n\vee \ldots \vee (\neg \, u^n)^{k_n-1})]=0\odot 1=0$, hence $e\odot (\neg \, x)^{k_n}=0$, so $(\neg \, x)^{k_n}\leq \neg \, e$, hence $\neg \, e\in [\neg \, x)$. We have obtained: $e\in [x)$, $\neg \, e\in [\neg \, x)$ and $e\in {\cal B}(A)$, that is $A$ has BLP by Proposition \ref{caractlrblp}.

\noindent (\ref{propstar2}) Assume that $A$ has BLP and let $x\in A$, arbitrary. By Proposition \ref{caractlrblp}, it follows that there exists an $e\in {\cal B}(A)$ such that $e\in [x)$ and $\neg \, e\in [\neg \, x)$, so there exist $m,n\in \N ^*$ such that $x^n\leq e$ and $(\neg \, x)^m\leq \neg \, e$, thus $e\odot (\neg \, x)^m=0$ by Lemma \ref{aritmlr}, (\ref{aritmlr7}). Let $u=x\vee \neg \, e$. Then, by Lemma \ref{aritmlr}, (\ref{aritmlr13}), and Lemma \ref{aritmcbool}, (\ref{aritmcbool1}) and (\ref{aritmcbool4}), $\neg \, u=\neg \, (x\vee \neg \, e)=\neg \, x\wedge \neg \, \neg \, e=\neg \, x\wedge e=\neg \, x\odot e$. Hence, by Lemma \ref{aritmcbool}, (\ref{aritmcbool2}), $(\neg \, u)^m=(\neg \, x\odot e)^m=(\neg \, x)^m\odot e^m=(\neg \, x)^m\odot e=0$, so $\neg \, u\in N(A)$. By the distributivity of $\odot $ with respect to $\vee $ and Lemma \ref{aritmlr}, (\ref{aritmlr7}), $u\odot e=(x\vee \neg \, e)\odot e=(x\odot e)\vee (\neg \, e\odot e)=(x\odot e)\vee 0=x\odot e$. $x^n\leq e$ (see above) and $x^n\leq x$ by Lemma \ref{aritmlr}, (\ref{aritmlr1}), hence $x^n\leq x\wedge e=x\odot e=u\odot e$ by Lemma \ref{aritmcbool}, (\ref{aritmcbool4}). Thus $x^n\leq u\odot e$, so $u\odot e\in [x)$, thus $[u)\vee [e)=[u\odot e)\subseteq [x)$. $u\odot e=x\odot e\leq x$ by Lemma \ref{aritmlr}, (\ref{aritmlr1}), hence $x\in [u\odot e)$, thus $[x)\subseteq [u\odot e)=[u)\vee [e)$. We have obtained: $[x)=[u)\vee [e)$, $\neg \, u$ is nilpotent and $e\in {\cal B}(A)$; so $A$ satisfies $(\star \star )$.

\noindent (\ref{propstar3}) By (\ref{propstar1}), (\ref{propstar2}) and Remark \ref{rem2star}.\end{proof}

\begin{openproblem} Characterize the class of the residuated lattices in which $(\star )$, BLP and $(\star \star )$ are equivalent.\end{openproblem}

\begin{example}
Not all residuated lattices have the property $(\star \star )$. Indeed, the residuated lattice in Example \ref{exfarablp} has $\odot =\wedge $ and does not have BLP, thus it does not satisfy $(\star \star )$, according to Proposition \ref{propstar}, (\ref{propstar3}).\end{example}

\begin{lemma} \begin{enumerate}
\item\label{boolradnil1} If $A={\cal B}(A)\cup {\rm Rad}(A)\cup N(A)$, then $A$ satisfies $(\star )$.

\item\label{boolradnil2} If $A={\rm Rad}(A)\cup H(A)$, where $H(A)$ is the set of the archimedean elements of $A$, then $A$ satisfies $(\star )$.\end{enumerate}\label{boolradnil}\end{lemma}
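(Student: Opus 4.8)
The plan is to treat each statement by a short case analysis on which of the sets in the hypothesis the arbitrary element $x$ belongs to, using only the arithmetic of principal filters recalled in Section \ref{preliminaries}: namely $[a)\vee [b)=[a\odot b)$, $[1)=\{1\}$, $[0)=A$ and $[a^n)=[a)$ for every $n\in \N ^*$, together with the facts that $0,1\in {\cal B}(A)$ and that $1\in {\rm Rad}(A)$ (since ${\rm Rad}(A)$ is a filter and every filter contains $1$). No deep machinery is needed; the work is entirely in choosing the right $u$ and $e$ in each case.

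I would prove (\ref{boolradnil2}) first. Fix $x\in A$; by hypothesis $x\in {\rm Rad}(A)$ or $x\in H(A)$. If $x\in {\rm Rad}(A)$, take $u=x$ and $e=1$, so that $[u)\vee [e)=[u)\vee [1)=[u\odot 1)=[u)=[x)$. If $x\in H(A)$, choose $n\in \N ^*$ with $x^n\in {\cal B}(A)$ and set $e=x^n$ and $u=1$; then $[u)\vee [e)=[1)\vee [x^n)=[x^n)=[x)$, where the last equality is $[a^n)=[a)$. In both cases $u\in {\rm Rad}(A)$ and $e\in {\cal B}(A)$, so $(\star )$ holds. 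For (\ref{boolradnil1}), the quickest route is to observe that its hypothesis is actually stronger than that of (\ref{boolradnil2}): every Boolean element is archimedean (take exponent $1$), and every nilpotent element is archimedean (if $x^n=0$ then $x^n\in {\cal B}(A)$), so ${\cal B}(A)\cup N(A)\subseteq H(A)$ and hence $A={\cal B}(A)\cup {\rm Rad}(A)\cup N(A)$ forces $A={\rm Rad}(A)\cup H(A)$; thus (\ref{boolradnil1}) follows from (\ref{boolradnil2}). Alternatively one can argue (\ref{boolradnil1}) directly by a three-case split: for $x\in {\cal B}(A)$ take $e=x$, $u=1$; for $x\in {\rm Rad}(A)$ take $u=x$, $e=1$; for $x\in N(A)$ note $[x)=[0)=A$ and take $e=0\in {\cal B}(A)$, $u=1\in {\rm Rad}(A)$, so that $[u)\vee [e)=[0)=A=[x)$.

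There is essentially no obstacle here, but the one point deserving a moment's care is the nilpotent case of (\ref{boolradnil1}): it is tempting to push $x$ (or $0$) into the radical slot, yet $0\notin {\rm Rad}(A)$ unless $A$ is trivial. The correct move is instead to place $0$ in the Boolean slot, which is legitimate since $0\in {\cal B}(A)$, and a harmless $u=1$ in the radical slot. For the write-up I would favour proving (\ref{boolradnil2}) and then deducing (\ref{boolradnil1}) from the inclusion ${\cal B}(A)\cup N(A)\subseteq H(A)$, since this avoids repeating the case analysis and isolates the only genuinely nontrivial choice.
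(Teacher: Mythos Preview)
Your proof is correct and essentially coincides with the paper's: the paper likewise handles each case by choosing $u=1$ or $e=1$ appropriately (with $e=x^n$ in the archimedean case and $e=0$ in the nilpotent case), and it also explicitly remarks that (\ref{boolradnil1}) follows from (\ref{boolradnil2}) via ${\cal B}(A)\cup N(A)\subseteq H(A)$ before giving the direct three-case split. The only cosmetic difference is the order of presentation---the paper proves (\ref{boolradnil1}) directly first and then (\ref{boolradnil2}), whereas you prove (\ref{boolradnil2}) first and deduce (\ref{boolradnil1}).
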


\begin{proof} (\ref{boolradnil1}) Although, clearly, any Boolean element is archimedean and any nilpotent element is archimedean, that is ${\cal B}(A)\subseteq H(A)$ and $N(A)\subseteq H(A)$, and so this first statement actually follows from the second, we shall provide here a separate proof for the first statement of this lemma.

So let us assume that $A={\cal B}(A)\cup {\rm Rad}(A)\cup N(A)$, and consider an arbitrary element $x\in A$. Then $x\in {\cal B}(A)$ or $x\in {\rm Rad}(A)$ or $x\in N(A)$.

If $x\in {\cal B}(A)$, then let $u=1\in {\rm Rad}(A)$ (because ${\rm Rad}(A)$ is a filter of $A$) and $e=x\in {\cal B}(A)$. We have $[x)=[1\wedge x)=[u\wedge e)=[u)\vee [e)$.

If $x\in {\rm Rad}(A)$, then let $u=x\in {\rm Rad}(A)$ and $e=1\in {\cal B}(A)$. We have $[x)=[x\wedge 1)=[u\wedge e)=[u)\vee [e)$.

Finally, if $x\in N(A)$, then $[x)=[0)=[1\wedge 0)=[1)\vee [0)$, and we have $1\in {\rm Rad}(A)$ and $0\in {\cal B}(A)$.

Therefore $A$ satisfies $(\star )$.

\noindent (\ref{boolradnil2}) Assume that $A={\cal B}(A)\cup {\rm Rad}(A)\cup H(A)$, and let $x$ be an arbitrary element of $A$. Since the case $x\in {\rm Rad}(A)$ has been treated above, it only remains to treat the case when $x\in H(A)$.

So let us assume that $x$ is an archimedean element of $A$, that is $u=x^n\in {\cal B}(A)$ for some $n\in \N ^*$. Then $[x)=[x^n)=[u)=[u\wedge 1)=[u)\vee [1)$, and $1\in {\rm Rad}(A)$.

Thus $A$ satisfies $(\star )$.\end{proof}

\begin{corollary}
Any hyperarchimedean residuated lattice satisfies $(\star )$, but the converse is not true.\label{maitare2}\end{corollary}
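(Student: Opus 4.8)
The plan is to dispatch the two assertions separately, leaning on Lemma \ref{boolradnil}, which already packages the computation of $(\star )$. For the forward direction, suppose $A$ is hyperarchimedean. Then by definition every $x\in A$ is archimedean, so in the notation of Lemma \ref{boolradnil}, (\ref{boolradnil2}), one has $H(A)=A$; hence $A={\rm Rad}(A)\cup H(A)$ holds trivially, and that lemma immediately yields that $A$ satisfies $(\star )$. Equivalently, I can give the one-line direct argument: for an arbitrary $x\in A$ pick $n\in \N ^*$ with $e:=x^n\in {\cal B}(A)$ and set $u:=1\in {\rm Rad}(A)$; then $[x)=[x^n)=[e)=[e)\vee [1)=[u)\vee [e)$, using the preliminary identities $[a^n)=[a)$ and $[1)=\{1\}$ (so $[1)$ is the least filter and joining with it changes nothing).

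For the converse I would exhibit a local residuated lattice that fails to be hyperarchimedean, since local residuated lattices always satisfy $(\star )$: a local $A$ has $A=N(A)\cup {\rm Rad}(A)$ by Proposition \ref{caractloc}, whence $A={\cal B}(A)\cup {\rm Rad}(A)\cup N(A)$, so Lemma \ref{boolradnil}, (\ref{boolradnil1}), applies. The cleanest witness is any chain with at least three elements organized as in Example \ref{exutil}. By Example \ref{easy} such a chain is local, hence satisfies $(\star )$ by the preceding observation; and by Corollary \ref{hypblp} it is not hyperarchimedean, because $\odot =\wedge $ makes every element idempotent, so the archimedean elements coincide with ${\cal B}(A)=\{0,1\}$ and any strictly intermediate element fails to be archimedean. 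Concretely one may take the three-element chain $\{0,a,1\}$ of Example \ref{coolex}, where $N(A)=\{0\}$, ${\rm Rad}(A)=\{a,1\}$, so $A=N(A)\cup {\rm Rad}(A)$ gives $(\star )$, while $a^n=a\notin {\cal B}(A)$ for all $n$ shows $a$ is not archimedean.

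I do not anticipate a genuine obstacle here: both implications reduce to Lemma \ref{boolradnil} once the structural facts (hyperarchimedean $\Rightarrow H(A)=A$, local $\Rightarrow A=N(A)\cup {\rm Rad}(A)$) are invoked, and the only real decision is the choice of counterexample. The single point requiring care is the degenerate case: the one- and two-element chains are Boolean algebras and therefore \emph{are} hyperarchimedean, so the counterexample genuinely needs at least three elements—this is precisely why Example \ref{easy} and Corollary \ref{hypblp} are stated with that cardinality restriction, and I would flag it explicitly to make clear that the strictness of the converse is not an artifact of a trivial example.
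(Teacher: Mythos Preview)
Your proof is correct and uses essentially the same approach as the paper: the forward direction via Lemma \ref{boolradnil}, (\ref{boolradnil2}), and the same three-element chain of Example \ref{coolex} as counterexample. The only difference is in how you establish $(\star )$ for the counterexample: the paper invokes the fact that this chain has BLP and $\odot =\wedge $, then applies Proposition \ref{propstar}, (\ref{propstar3}), to conclude $(\star )$; you instead observe that the chain is local and deduce $(\star )$ from $A=N(A)\cup {\rm Rad}(A)$ and Lemma \ref{boolradnil}, (\ref{boolradnil1}). Both routes are short and valid; yours has the minor advantage of being self-contained at this point in the paper (it does not require the equivalence in Proposition \ref{propstar}), and in fact it anticipates the argument the paper gives a few lines later in Proposition \ref{maitare}.
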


\begin{proof} To show that the converse implication is not true, notice that the residuated lattice in Example \ref{coolex} has BLP and $\odot =\wedge $, thus it satisfies $(\star )$ by Proposition \ref{propstar}, but it is not hyperarchimedean, since all of its elements are idempotent and ${\cal B}(A)=\{0,1\}$, which shows that its middle element $a$ is not archimedean.\end{proof}

\begin{corollary}
Any Boolean algebra induces a residuated lattice with the property $(\star )$.\label{maitare0}\end{corollary}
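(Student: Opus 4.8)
The plan is to reduce this entirely to results already established in the excerpt, exploiting the key structural fact that a Boolean algebra, viewed as a residuated lattice, satisfies $\odot =\wedge $ and ${\cal B}(A)=A$. Indeed, by Remark \ref{interestingfact}, if the bounded lattice reduct of $A$ is a Boolean algebra, then ${\cal B}(A)=A$ and consequently $\odot =\wedge $ in $A$ (via Lemma \ref{aritmcbool}, (\ref{aritmcbool2})). So from the outset I may treat $A$ as a residuated lattice all of whose elements are Boolean and idempotent.

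The cleanest route I would take is to invoke Lemma \ref{boolradnil}, (\ref{boolradnil1}). Since ${\cal B}(A)=A$, the decomposition hypothesis $A={\cal B}(A)\cup {\rm Rad}(A)\cup N(A)$ holds trivially (it already holds at the level of the first summand), and Lemma \ref{boolradnil}, (\ref{boolradnil1}), immediately yields that $A$ satisfies $(\star )$. This is the whole argument.

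Two alternative derivations are worth recording, both equally short. First, by Remark \ref{boolblp} any Boolean algebra induces a residuated lattice with BLP; since $\odot =\wedge $ here, Proposition \ref{propstar}, (\ref{propstar3}), tells us that BLP is equivalent to $(\star )$, so $(\star )$ follows. Second, one can verify $(\star )$ by hand with no machinery: given $x\in A$, set $e=x\in {\cal B}(A)$ and $u=1\in {\rm Rad}(A)$, and observe that $[x)=[x\wedge 1)=[e\wedge u)=[e)\vee [u)$, using that $[e\wedge u)=[e\odot u)=[e)\vee [u)$ from the preliminaries. I would present the Lemma \ref{boolradnil} derivation as the main proof and mention the direct computation as a remark.

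There is essentially no obstacle to overcome: all the substantive work (the implication $(\star )\Rightarrow BLP$, the coincidence of the three conditions when $\odot =\wedge $, and the set-theoretic decomposition lemma) has already been carried out earlier in the section, and the corollary only applies those results to the extremal case ${\cal B}(A)=A$. The only point requiring a moment's care is the bookkeeping fact, cited from the preliminaries, that $[a\wedge b)=[a)\vee [b)$, which is what lets me write each principal filter $[x)$ as a join $[u)\vee [e)$ of the required form.
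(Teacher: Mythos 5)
Your proof is correct and is, in substance, the paper's own argument: the paper states this corollary without a separate proof, as an immediate consequence of the surrounding results (a Boolean algebra has ${\cal B}(A)=A$, hence is hyperarchimedean, so Corollary \ref{maitare2} applies, which itself rests on Lemma \ref{boolradnil}), and your main derivation via Lemma \ref{boolradnil}, (\ref{boolradnil1}), with the decomposition $A={\cal B}(A)\cup {\rm Rad}(A)\cup N(A)$ trivially satisfied, is the same argument routed through part (i) of that lemma instead of part (ii). Your two alternatives are also valid; in particular the direct computation $[x)=[x\wedge 1)=[x)\vee [1)$ with $e=x\in {\cal B}(A)$ and $u=1\in {\rm Rad}(A)$ simply reproduces the first case of the paper's proof of Lemma \ref{boolradnil}.
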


\begin{proposition} Any local residuated lattice satisfies $(\star )$, but the converse is not true. Moreover, there exist residuated lattices which satisfy $(\star )$, but are not semilocal.\label{maitare}\end{proposition}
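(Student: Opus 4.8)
The plan is to split the statement into its three assertions and dispatch each by reducing to results already established earlier in the excerpt.

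First I would prove that every local residuated lattice satisfies $(\star )$. The key observation is that, by the equivalence of (\ref{caractloc1}) and (\ref{caractloc2}) in Proposition \ref{caractloc}, a local residuated lattice $A$ satisfies $A=N(A)\cup {\rm Rad}(A)$. Since trivially $N(A)\cup {\rm Rad}(A)\subseteq {\cal B}(A)\cup {\rm Rad}(A)\cup N(A)\subseteq A$, this chain of inclusions forces $A={\cal B}(A)\cup {\rm Rad}(A)\cup N(A)$. Hence Lemma \ref{boolradnil}, (\ref{boolradnil1}), applies verbatim and yields $(\star )$. So this first part is essentially a one-line consequence of the arithmetic characterization of locality together with the covering lemma for $(\star )$, and I expect no genuine obstacle here; Lemma \ref{implicdir} gives an alternative route but is not needed.

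Next I would show that the converse fails, i.e. that $(\star )$ does not imply locality. The cleanest witnesses are Boolean algebras: by Corollary \ref{maitare0}, every Boolean algebra, viewed as a residuated lattice in the canonical way, satisfies $(\star )$. It then suffices to exhibit a Boolean algebra that is not local, and the four-element Boolean algebra ${\cal L}_2^2$ (the square of the two-element chain) considered in Remark \ref{qlocnotloc} does the job, since it has two distinct maximal filters and hence is not local. Thus ${\cal L}_2^2$ satisfies $(\star )$ but is not local.

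For the final assertion I would strengthen the same idea. Appealing again to Corollary \ref{maitare0}, any Boolean algebra satisfies $(\star )$, so I only need a Boolean algebra that is not even semilocal. Taking $I$ infinite, the Boolean algebra ${\cal L}_2^I$ from Remark \ref{qlocnotsloc} satisfies $(\star )$ while ${\rm Max}({\cal L}_2^I)$ is infinite, so ${\cal L}_2^I$ is not semilocal. The only point requiring care, and the closest thing to an obstacle, is making sure these counterexamples genuinely satisfy $(\star )$ rather than merely the weaker BLP; this is exactly what Corollary \ref{maitare0} guarantees, so invoking it (instead of the BLP-level statements of Section \ref{charBLP}) is the decisive move.
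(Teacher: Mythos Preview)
Your proof is correct and follows essentially the same approach as the paper: both derive $(\star)$ for local $A$ from the decomposition $A={\rm Rad}(A)\cup N(A)$ via Lemma \ref{boolradnil}, and both use the Boolean algebras from Remarks \ref{qlocnotloc} and \ref{qlocnotsloc} together with Corollary \ref{maitare0} for the counterexamples. The only cosmetic difference is that you spell out the inclusion chain needed to invoke Lemma \ref{boolradnil}, (\ref{boolradnil1}), while the paper just cites the lemma directly.
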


\begin{proof} Let $A$ be a local residuated lattice. Then $A={\rm Rad}(A)\cup N(A)$, by Proposition \ref{caractloc}. By Lemma \ref{boolradnil}, it follows that $A$ satisfies $(\star )$.

To disprove the converse implication, just take any of the examples in Remark \ref{qlocnotloc}, which are Boolean algebras, thus residuated lattices satisfying $(\star )$, according to Remark \ref{maitare0}, but they are not local residuated lattices.

Now take the example in Remark \ref{qlocnotsloc}, which is a Boolean algebra, thus a residuated lattice satisfying $(\star )$, by Remark \ref{maitare0}, but it is not a semilocal residuated lattice.\end{proof}

\begin{lemma}
Any non--trivial linearly ordered residuated lattice is local (regardless of its exact residuated lattice structure).\label{chainloc}\end{lemma}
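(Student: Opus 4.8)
The plan is to invoke Proposition \ref{caractloc}, specifically the equivalence between $A$ being local and the set $A\setminus N(A)$ being a filter of $A$ (conditions (\ref{caractloc1}) and (\ref{caractloc0})). Thus I would set out to verify that, for a non--trivial chain $A$, the set $A\setminus N(A)$ satisfies the two defining conditions of a filter. Note that the proof will also essentially establish condition (8) of Proposition \ref{caractloc}, which is exactly the contrapositive of closure under $\odot$.

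First I would dispose of the easy points. Since $A$ is non--trivial, $0\neq 1$, so $1^n=1\neq 0$ for every $n\in \N ^*$, whence $1\notin N(A)$ and $A\setminus N(A)$ is non--empty; this is precisely where non--triviality is needed, as a filter must be non--empty. For the upward--closure axiom, suppose $a\in A\setminus N(A)$ and $a\leq b$: were $b$ nilpotent, say $b^n=0$, then Lemma \ref{aritmlr}, (\ref{aritmlr1}), would give $a^n\leq b^n=0$, forcing $a\in N(A)$, a contradiction; hence $b\in A\setminus N(A)$. This step uses only monotonicity of $\odot $ and holds in any residuated lattice, chain or not.

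The step where linearity is indispensable is closure under $\odot $, and I expect it to be the crux. I would argue by contraposition: assume $a\odot b\in N(A)$, so that $(a\odot b)^n=a^n\odot b^n=0$ for some $n\in \N ^*$ (using commutativity of $\odot $). Because $A$ is linearly ordered, $a$ and $b$ are comparable; assuming without loss of generality $a\leq b$, monotonicity of $\odot $ (Lemma \ref{aritmlr}, (\ref{aritmlr1})) yields $a^n\leq b^n$ and therefore $a^{2n}=a^n\odot a^n\leq a^n\odot b^n=0$, so $a\in N(A)$. Hence $a\odot b\in N(A)$ implies $a\in N(A)$ or $b\in N(A)$, which is exactly the contrapositive of the statement that $A\setminus N(A)$ is closed under $\odot $.

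Having verified both filter axioms, $A\setminus N(A)$ is a filter of $A$, so the implication (\ref{caractloc0})$\Rightarrow $(\ref{caractloc1}) of Proposition \ref{caractloc} yields that $A$ is local. Conceptually, the squaring estimate $a^{2n}\leq a^n\odot b^n$ is the mechanism that converts the comparability afforded by linearity into nilpotence of the smaller of the two elements; everything else is routine manipulation with monotonicity of multiplication and the definition of $N(A)$.
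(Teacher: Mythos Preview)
Your proof is correct, but it takes a genuinely different route from the paper's own argument. The paper argues directly from the definition of ``local'': assuming two distinct maximal filters $M$ and $P$ exist, it picks $x\in M\setminus P$ and uses the linear order to show that every $y\in P$ satisfies $x<y$, so $P\subsetneq [x)\subseteq M$, contradicting the maximality of $P$. This is a purely order-theoretic argument about filters in a chain, making no reference to nilpotents or to Proposition~\ref{caractloc}.

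Your approach instead leverages the equivalence (\ref{caractloc1})$\Leftrightarrow$(\ref{caractloc0}) (or (\ref{caractloc1})$\Leftrightarrow$(viii)) of Proposition~\ref{caractloc}, reducing the problem to checking that $A\setminus N(A)$ is closed under $\odot$, which you do via the squaring estimate $a^{2n}\leq a^n\odot b^n$. The paper's route is more self-contained, needing only the definition of a maximal filter and the trivial fact that in a chain any two elements are comparable; yours is arguably more algebraic and has the side benefit of exhibiting explicitly why products of non-nilpotents remain non-nilpotent in a chain. Both arguments isolate linearity as the decisive hypothesis at exactly one step, and both are short and clean.
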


\begin{proof} Let $A$ be a non--trivial linearly ordered residuated lattice. Assume by absurdum that $A$ has two distinct maximal filters $M$ and $P$. Then $M\nsubseteq P$, thus there exists an element $x\in M\setminus P$, so, for every $y\in P$, $y\nsubseteq x$. But, since $A$ is a chain, this means that every $y\in P$ satisfies $x<y$, hence $P\subsetneq [x)\subseteq M$, thus $P\subsetneq M$, and this is a contradiction to the maximality of $P$. Therefore $A$ has only one maximal filter (see Lemma \ref{maxsiprime}, (\ref{maxsiprime2})), that is $A$ is a local residuated lattice.\end{proof}

\begin{corollary}
Any linearly ordered residuated lattice satisfies $(\star )$ (regardless of its exact residuated lattice structure).\label{maitare1}\end{corollary}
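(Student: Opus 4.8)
The plan is to reduce everything to the already-established fact that local residuated lattices satisfy $(\star )$. First I would dichotomize on triviality: like any residuated lattice, a linearly ordered one is either trivial (so $0=1$) or non--trivial (so $0\neq 1$). The trivial case needs no work, since the text has already observed that the trivial residuated lattice satisfies $(\star )$; I would simply invoke that remark.

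For the non--trivial case, the engine is Lemma \ref{chainloc}, which guarantees that any non--trivial linearly ordered residuated lattice is local. Having secured locality, I would then apply Proposition \ref{maitare}, which states outright that every local residuated lattice satisfies $(\star )$. Composing these two implications — non--trivial chain $\Rightarrow$ local $\Rightarrow$ satisfies $(\star )$ — immediately yields the conclusion, and together with the trivial case this covers all linearly ordered residuated lattices.

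I do not expect any genuine obstacle, as the statement is a direct corollary of Lemma \ref{chainloc} and Proposition \ref{maitare}. The only subtlety worth flagging is that the trivial residuated lattice must be dispatched separately rather than absorbed into the locality argument: it has no proper filters, hence no maximal filters, and so is \emph{not} local. Thus one cannot route the trivial case through Lemma \ref{chainloc} and Proposition \ref{maitare}, and handling it via the explicit observation that it satisfies $(\star )$ is the clean way to close the remaining gap.
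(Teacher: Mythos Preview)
Your proposal is correct and matches the paper's intended argument: the corollary is stated without proof precisely because it follows immediately from Lemma \ref{chainloc} and Proposition \ref{maitare}, with the trivial case covered by the earlier observation that the trivial residuated lattice satisfies $(\star)$. Your care in separating out the trivial case is appropriate, since Lemma \ref{chainloc} is stated only for non--trivial chains.
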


\begin{note} Proposition \ref{propstar}, (\ref{propstar1}), shows that Corollary \ref{maitare2} strengthens Corollary \ref{hypblp}, Corollary \ref{maitare0} strengthens Remark \ref{boolblp}, Proposition \ref{maitare} strengthens the first statement in Corollary \ref{locblp}, together with Remarks \ref{qlocnotloc} and \ref{qlocnotsloc}, and Corollary \ref{maitare1} strengthens the main statement in Remark \ref{chainblp}.\end{note}

\begin{proposition}
\begin{enumerate}
\item\label{p1star1} $A$ satisfies $(\star )$ iff $A/F$ satisfies $(\star )$ for every filter $F$ of $A$.
\item\label{p1star2} $A$ satisfies $(\star \star )$ iff $A/F$ satisfies $(\star \star )$ for every filter $F$ of $A$.
\end{enumerate}\label{p1star}\end{proposition}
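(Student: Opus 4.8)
The plan is to prove each equivalence by splitting into its two implications, observing first that the right-to-left directions are immediate. Specializing the hypothesis ``$A/F$ satisfies the property for every filter $F$'' to $F=\{1\}$ and invoking the isomorphism $A\cong A/\{1\}$ recorded in Section \ref{preliminaries} yields the bare property for $A$, because both $(\star )$ and $(\star \star )$ are phrased purely in terms of $[x)$, ${\cal B}(A)$, ${\rm Rad}(A)$ and $N(A)$, all of which are preserved by residuated lattice isomorphisms. Thus the actual content is to show that each property descends to every quotient $A/F$.

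For the forward direction I would fix a filter $F$ and an arbitrary element $x/F$ of $A/F$, and push the decomposition of $[x)$ in $A$ through the canonical surjection $p_F:A\to A/F$. The three stability facts I will use are: first, $p_F({\cal B}(A))\subseteq {\cal B}(A/F)$, which is exactly the Boolean morphism ${\cal B}(p_F)$ together with Lemma \ref{lifted}, (\ref{lifted4}); second, $p_F({\rm Rad}(A))\subseteq {\rm Rad}(A/F)$; and third, that $\neg\, u\in N(A)$ implies $\neg\, (u/F)\in N(A/F)$. The second is the only one requiring an argument: if $u\in {\rm Rad}(A)$ then, by Lemma \ref{radunit}, for each $n\in \N^*$ there is $k_n\in \N^*$ with $(\neg\, u^n)^{k_n}=0$; since $p_F$ is a residuated lattice morphism it preserves powers, $\neg\,$ and $0$, so $(\neg\, (u/F)^n)^{k_n}=((\neg\, u^n)^{k_n})/F=0/F$, whence $u/F\in {\rm Rad}(A/F)$ by Lemma \ref{radunit} applied in $A/F$. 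The third fact follows for the same reason: $(\neg\, u)^k=0$ gives $(\neg\, (u/F))^k=0/F$.

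It then remains to transport the equality of principal filters. Writing $[x)=[u)\vee [e)=[u\odot e)$ (using $[a)\vee [b)=[a\odot b)$ from Section \ref{preliminaries}), the equality $[x)=[u\odot e)$ is equivalent to the existence of $m,n\in \N^*$ with $x^m\leq u\odot e$ and $(u\odot e)^n\leq x$. Applying the order-preserving morphism $p_F$ and using $(u\odot e)/F=(u/F)\odot (e/F)$ gives $(x/F)^m\leq (u/F)\odot (e/F)$ and $((u/F)\odot (e/F))^n\leq x/F$, that is $[x/F)=[(u/F)\odot (e/F))=[u/F)\vee [e/F)$. For part (\ref{p1star1}) this exhibits a decomposition of $[x/F)$ with $u/F\in {\rm Rad}(A/F)$ and $e/F\in {\cal B}(A/F)$, so $A/F$ satisfies $(\star )$; for part (\ref{p1star2}) we instead obtain $e/F\in {\cal B}(A/F)$ and $\neg\, (u/F)\in N(A/F)$, so $A/F$ satisfies $(\star \star )$. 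As $x/F$ was arbitrary, both quotients inherit the respective property.

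The computations are routine; the only point that is not purely formal is the stability of the radical under $p_F$, so I expect the verification via Lemma \ref{radunit} to be the crux of the argument, with everything else following from the fact that $p_F$ is a surjective residuated lattice morphism.
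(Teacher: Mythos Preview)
Your proposal is correct and follows essentially the same route as the paper: take $F=\{1\}$ for the converse, and for the direct implication push the decomposition $[x)=[u\odot e)$ through $p_F$ via the two inequalities $x^m\leq u\odot e$ and $(u\odot e)^n\leq x$, then observe that $e/F\in {\cal B}(A/F)$ and $u/F\in {\rm Rad}(A/F)$ (respectively $\neg (u/F)\in N(A/F)$). Your justification of $u/F\in {\rm Rad}(A/F)$ via Lemma \ref{radunit} is in fact slightly more careful than the paper, which simply writes ${\rm Rad}(A)/F={\rm Rad}(A/F)$ even though the preliminaries only record this equality under the hypothesis $F\subseteq {\rm Rad}(A)$; only the inclusion ${\rm Rad}(A)/F\subseteq {\rm Rad}(A/F)$ is needed, and your arithmetic argument supplies it for arbitrary $F$.
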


\begin{proof} (\ref{p1star1}): For the direct implication, assume that $A$ satisfies $(\star )$, and let $F$ be a filter of $A$. Let $x\in A$, arbitrary. Then there exist $u\in {\rm Rad}(A)$ and $e\in {\cal B}(A)$ such that $[x)=[u)\vee [e)=[u\odot e)$, that is $x\in [u\odot e)$ and $u\odot e\in [x)$, that is $(u\odot e)^n\leq x$ and $x^k\leq u\odot e$ for some $n,k\in \N ^*$. Then $u/F\in {\rm Rad}(A)/F={\rm Rad}(A/F)$ and $e/F\in {\cal B}(A)/F\subseteq {\cal B}(A/F)$ (see Lemma \ref{lifted}, (\ref{lifted4})), $(u/F\odot e/F)^n\leq x/F$ and $(x/F)^k\leq u/F\odot e/F$, hence $x/F\in [u/F\odot e/F)$ and $u/F\odot e/F\in [x/F)$, that is $[x/F)=[u/F\odot e/F)=[u/F)\vee [e/F)$, therefore $A/F$ satisfies $(\star )$.

For the converse implication, just take $F=\{1\}$.

\noindent (\ref{p1star2}): Same as the proof for (\ref{p1star1}), except, in the argument for the direct implication, we need to notice that, if $u\in A$ such that $\neg \, u\in N(A)$, then clearly $\neg \, u/F\in N(A/F)$.\end{proof}

\begin{remark} By Proposition \ref{propstar}, (\ref{propstar3}), and Proposition \ref{idempprodblp}, if $(A_i)_{i\in I}$ is a non--empty family of residuated lattices all having $\odot =\wedge $ and $\displaystyle A=\prod _{i\in I}A_i$, then: $A$ satisfies $(\star )$ iff $A$ has BLP iff $A$ satisfies $(\star \star )$ iff each $A_i$ satisfies $(\star )$ iff each $A_i$ has BLP iff each $A_i$ satisfies $(\star \star )$.\end{remark}

\begin{proposition}
Let $(A_i)_{i\in I}$ be a non--empty family of residuated lattices and $\displaystyle A=\prod _{i\in I}A_i$. Then:

\begin{enumerate}
\item\label{p2star1} If $A$ satisfies $(\star )$, then $A_i$ satisfies $(\star )$ for each $i\in I$.
\item\label{p2star2} If $A$ satisfies $(\star \star )$, then $A_i$ satisfies $(\star \star )$ for each $i\in I$.
\end{enumerate}\label{p2star}\end{proposition}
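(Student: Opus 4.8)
The plan is to realize each factor $A_i$ as a quotient of the product $A$, and then simply invoke Proposition \ref{p1star}, which already tells us that both $(\star )$ and $(\star \star )$ are inherited by quotients. This turns the statement into a one-line reduction rather than a fresh computation with principal filters.

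First I would fix $i\in I$ and consider the canonical projection $pr_i:A\rightarrow A_i$, which is a surjective residuated lattice morphism. The key step is to identify $A_i$ with a concrete quotient of $A$. Set $G_i=\{x\in A\ |\ pr_i(x)=1\}=\{x=(x_j)_{j\in I}\in A\ |\ x_i=1\}$; this is a filter of $A$, being the product of the filters $\{1\}\subseteq A_i$ in coordinate $i$ and $A_j$ in every other coordinate. Then the assignment $x/G_i\mapsto pr_i(x)$ is a residuated lattice isomorphism $A/G_i\cong A_i$: for all $x,y\in A$ we have $x/G_i=y/G_i$ iff $x\leftrightarrow y\in G_i$ iff $x_i\leftrightarrow y_i=1$ iff $x_i=y_i$ iff $pr_i(x)=pr_i(y)$ (using Lemma \ref{aritmlr}, (\ref{aritmlr2})), so the map is well defined and injective, while surjectivity and the morphism property are inherited from $pr_i$.

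With this identification, both parts follow immediately. Assuming $A$ satisfies $(\star )$, Proposition \ref{p1star}, (\ref{p1star1}), gives that $A/F$ satisfies $(\star )$ for every filter $F$ of $A$; taking $F=G_i$ and transporting along the isomorphism $A/G_i\cong A_i$ (note that $(\star )$ is formulated purely in terms of ${\rm Rad}$, ${\cal B}$ and principal filters, hence is invariant under isomorphism) shows that $A_i$ satisfies $(\star )$. The argument for $(\star \star )$ is word-for-word the same, using Proposition \ref{p1star}, (\ref{p1star2}), in place of (\ref{p1star1}).

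There is no genuine obstacle here; the only point that needs care is verifying that $G_i$ is precisely the kernel filter of $pr_i$ and that $x/G_i\mapsto pr_i(x)$ is an isomorphism, which is the routine check above. If one prefers to avoid the quotient bookkeeping, the same result can be obtained by a direct projection: lift an arbitrary $x_i\in A_i$ to some $x\in A$, apply $(\star )$ in $A$ to write $[x)=[u)\vee [e)=[u\odot e)$ with $u\in {\rm Rad}(A)$ and $e\in {\cal B}(A)$, and project. Since ${\cal B}(A)=\prod _{j\in I}{\cal B}(A_j)$ and ${\rm Rad}(A)\subseteq \prod _{j\in I}{\rm Rad}(A_j)$, the components $e_i=pr_i(e)$ and $u_i=pr_i(u)$ lie in ${\cal B}(A_i)$ and ${\rm Rad}(A_i)$ respectively; and applying the order-preserving map $pr_i$ to the inequalities $(u\odot e)^n\leq x$ and $x^m\leq u\odot e$ that witness $[x)=[u\odot e)$ yields $(u_i\odot e_i)^n\leq x_i$ and $x_i^m\leq u_i\odot e_i$, hence $[x_i)=[u_i\odot e_i)=[u_i)\vee [e_i)$. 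For $(\star \star )$ the same projection works, noting that $\neg \, u\in N(A)$ forces $(\neg \, u_i)^n=0$, that is $\neg \, u_i\in N(A_i)$.
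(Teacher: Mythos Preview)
Your proof is correct. The main approach you give --- realizing each $A_i$ as the quotient $A/G_i$ by the kernel filter of $pr_i$ and then invoking Proposition \ref{p1star} --- is genuinely different from what the paper does. The paper argues by direct projection: it lifts an arbitrary $x_k\in A_k$ to an element $x\in A$, applies $(\star )$ in $A$ to write $[x)=[u\odot e)$ with $u\in {\rm Rad}(A)$ and $e\in {\cal B}(A)$, and then projects the witnessing inequalities componentwise, using ${\rm Rad}(A)\subseteq \prod _{i\in I}{\rm Rad}(A_i)$ and ${\cal B}(A)=\prod _{i\in I}{\cal B}(A_i)$. This is exactly the ``alternative'' you sketch at the end of your proposal. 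Your quotient route is slicker because it reduces the statement to a result already on the books, whereas the paper's route is more self-contained and makes the componentwise structure explicit. One small point worth noting: your appeal to Proposition \ref{p1star} relies on ${\rm Rad}(A)/F\subseteq {\rm Rad}(A/F)$ for an \emph{arbitrary} filter $F$ (since $G_i$ is generally not contained in ${\rm Rad}(A)$); this inclusion does hold, by Lemma \ref{radunit}, and is all that the proof of Proposition \ref{p1star} actually uses, but the equality stated in the preliminaries is only asserted under $F\subseteq {\rm Rad}(A)$.
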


\begin{proof} (\ref{p2star1}) Assume that $A$ satisfies $(\star )$, and let $k\in I$ and $x_k\in A_k$, both arbitrary but fixed. For all $i\in I\setminus \{k\}$, let $x_i\in A_i$, arbitrary. Then $x=(x_i)_{i\in I}\in A$. Since $A$ satisfies $(\star )$, it follows that there exist $u=(u_i)_{i\in I}\in {\rm Rad}(A)$ ($u_i\in A_i$ for every $i\in I$) and $\displaystyle e=(e_i) _{i\in I}\in {\cal B}(A)=\prod _{i\in I}{\cal B}(A_i)$ ($e_i\in A_i$ for every $i\in I$), such that $[x)=[u)\vee [e)=[u\odot e)$, that is $x\in [u\odot e)$ and $u\odot e\in [x)$, so $(u\odot e)^p\subseteq x$ and $x^q\subseteq u\odot e$ for some $p,q\in \N ^*$, thus, for every $i\in I$, $(u_i\odot e_i)^p\subseteq x_i$ and $x_i^q\subseteq u_i\odot e_i$, hence $x_i\in [u_i\odot e_i)$ and $u_i\odot e_i\in [x_i)$, which means that $[x_i)=[u_i\odot e_i)=[u_i)\vee [e_i)$. Then, for every $i\in I$, $u_i\in {\rm Rad}(A_i)$ and $e_i\in {\cal B}(A_i)$, hence $u_k\in {\rm Rad}(A_k)$ and $e_k\in {\cal B}(A_k)$. Also, $[x_k)=[u_k)\vee [e_k)$. Hence $A_k$ satisfies $(\star )$.

\noindent (\ref{p2star2}) Similar to the proof of (\ref{p2star1}), once we notice that, if $u=(u_i)_{i\in I}\in A$ has $\neg \, u\in N(A)$, then, for all $i\in I$, $\neg \, u_i\in N(A_i)$.\end{proof}

\begin{proposition}
Let $n\in \N ^*$, $(A_i)_{i=1}^n$ be a family of residuated lattices and $\displaystyle A=\prod _{i=1}^nA_i$. Then:

\begin{enumerate}
\item\label{p2starbis1} $A$ satisfies $(\star )$ iff $A_i$ satisfies $(\star )$ for each $i\in \overline{1,n}$.

\item\label{p2starbis2} $A$ satisfies $(\star \star )$ iff $A_i$ satisfies $(\star \star )$ for each $i\in \overline{1,n}$.
\end{enumerate}\label{p2starbis}\end{proposition}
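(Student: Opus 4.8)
The forward implications in both (\ref{p2starbis1}) and (\ref{p2starbis2}) are already available: they are exactly Proposition \ref{p2star} specialized to the finite family $(A_i)_{i=1}^n$. So the plan is to concentrate entirely on the converse implications, and to build global witnesses out of the componentwise ones, exploiting the finiteness of the index set to merge finitely many exponents via a maximum.

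For the converse in (\ref{p2starbis1}), suppose each $A_i$ satisfies $(\star )$ and fix $x=(x_1,\ldots ,x_n)\in A$. First I would record the structural facts available for finite products: ${\cal B}(A)=\prod _{i=1}^n{\cal B}(A_i)$ and ${\rm Rad}(A)=\prod _{i=1}^n{\rm Rad}(A_i)$, the latter being the finite--product equality for the radical noted in Section \ref{preliminaries}. Applying $(\star )$ in each factor gives $u_i\in {\rm Rad}(A_i)$ and $e_i\in {\cal B}(A_i)$ with $[x_i)=[u_i)\vee [e_i)=[u_i\odot e_i)$; I would then set $u=(u_1,\ldots ,u_n)$ and $e=(e_1,\ldots ,e_n)$, which lie in ${\rm Rad}(A)$ and ${\cal B}(A)$ respectively. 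It remains to check $[x)=[u)\vee [e)=[u\odot e)$. The equality $[x_i)=[u_i\odot e_i)$ unwinds to $(u_i\odot e_i)^{p_i}\leq x_i$ and $x_i^{q_i}\leq u_i\odot e_i$ for suitable $p_i,q_i\in \N ^*$; taking $p=\max _i p_i$ and $q=\max _i q_i$ and using Lemma \ref{aritmlr}, (\ref{aritmlr1}) (powers are $\leq $--decreasing and $\odot $ is monotone), these inequalities hold simultaneously for all $i$ with the common exponents $p,q$, hence componentwise $(u\odot e)^p\leq x$ and $x^q\leq u\odot e$. This yields $x\in [u\odot e)$ and $u\odot e\in [x)$, so $[x)=[u\odot e)=[u)\vee [e)$, and $A$ satisfies $(\star )$.

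For the converse in (\ref{p2starbis2}) I would run the identical assembly: from $(\star \star )$ in each factor obtain $u_i\in A_i$ and $e_i\in {\cal B}(A_i)$ with $\neg \, u_i\in N(A_i)$ and $[x_i)=[u_i)\vee [e_i)$, and set $u=(u_i)_i$, $e=(e_i)_i$. The decomposition $[x)=[u)\vee [e)$ is obtained verbatim as above. The one genuinely new point --- and the step where finiteness is essential --- is verifying $\neg \, u\in N(A)$: since $\neg \, u=(\neg \, u_1,\ldots ,\neg \, u_n)$ with each $\neg \, u_i$ nilpotent, choosing $k_i\in \N ^*$ with $(\neg \, u_i)^{k_i}=0$ and setting $k=\max _i k_i$ gives $(\neg \, u_i)^k=0$ for every $i$ (again by Lemma \ref{aritmlr}, (\ref{aritmlr1})), whence $(\neg \, u)^k=0$ in the product, i.e.\ $\neg \, u\in N(A)$. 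This maximum--of--finitely--many--exponents argument is exactly what fails for infinite index sets, which is why in the general case only the forward implication (Proposition \ref{p2star}) survives; accordingly I expect this nilpotency verification, rather than the filter bookkeeping, to be the only delicate spot in the proof.
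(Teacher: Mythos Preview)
Your proposal is correct and follows essentially the same approach as the paper: both obtain the forward implications from Proposition \ref{p2star}, and for the converses both assemble componentwise witnesses $u_i,e_i$ into $u=(u_i)_i$ and $e=(e_i)_i$, invoke ${\cal B}(A)=\prod _i{\cal B}(A_i)$ and (for $(\star )$) ${\rm Rad}(A)=\prod _i{\rm Rad}(A_i)$, and merge the finitely many exponents via a maximum using Lemma \ref{aritmlr}, (\ref{aritmlr1}). Your explicit verification that $\neg \, u\in N(A)$ via $k=\max _ik_i$ is exactly the point the paper compresses into a single sentence.
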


\begin{proof} (\ref{p2starbis1}) The direct implication follows from Proposition \ref{p2star}, (\ref{p2star1}). For the converse implication, assume that each of the residuated lattices $A_1,\ldots ,A_n$ satisfies $(\star )$, and let $x\in A$, arbitrary. Then, for every $i\in \overline{1,n}$, there exists $u_i\in {\rm Rad}(A_i)$ and $e_i\in {\cal B}(A_i)$, such that $[x_i)=[u_i)\vee [e_i)=[u_i\odot e_i)$, that is $x_i\in [u_i\odot e_i)$ and $u_i\odot e_i\in [x_i)$, so $(u_i\odot e_i)^{m_i}\leq x_i$ and $x_i^{k_i}\leq u_i\odot e_i$ for some $m_i,k_i\in \N ^*$. Let $\displaystyle e=(e_1,\ldots ,e_n)\in \prod _{i=1}^n{\cal B}(A_i)={\cal B}(A)$ and $\displaystyle u=(u_1,\ldots ,u_n)\in \prod _{i=1}^n{\rm Rad}(A_i)={\rm Rad}(A)$. Let $m=\max \{m_1,\ldots ,m_n\}\in \N ^*$ and $k=\max \{k_1,\ldots ,k_n\}\in \N ^*$. Then, by Lemma \ref{aritmlr}, (\ref{aritmlr1}), we get that, for all $i\in \overline{1,n}$, $(u_i\odot e_i)^m\leq x_i$ and $x_i^k\leq u_i\odot e_i$, that is $(u\odot e)^m\leq x$ and $x^k\leq u\odot e$, hence $x\in [u\odot e)$ and $u\odot e\in [x)$, which means that $[x)=[u\odot e)=[u)\vee [e)$. Therefore $A$ satisfies $(\star )$.

\noindent (\ref{p2starbis2}) The direct implication follows from Proposition \ref{p2star}, (\ref{p2star2}). For the converse implication, the proof goes similarly to the one above in (\ref{p2starbis1}), once we notice that, if the negation of each component of an element of $A$ is nilpotent, then the negation of that element of $A$ is nilpotent.\end{proof}

\begin{proposition}
The following are equivalent:

\begin{enumerate}
\item\label{p3star1} $A$ satisfies $(\star )$;
\item\label{p3star2} $A/{\rm Rad}(A)$ satisfies $(\star )$ and ${\rm Rad}(A)$ has BLP (in $A$).\end{enumerate}\label{p3star}\end{proposition}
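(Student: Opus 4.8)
The plan is to prove the two implications separately, with the forward one being essentially a citation and the backward one carrying all the work. Throughout write $R={\rm Rad}(A)$ and use the quotient notation $a/R$.

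For (\ref{p3star1})$\Rightarrow $(\ref{p3star2}): assume $A$ satisfies $(\star )$. Then $A/R$ satisfies $(\star )$ directly by Proposition \ref{p1star}, (\ref{p1star1}), applied to the filter $R$. Moreover $(\star )\Rightarrow BLP$ by Proposition \ref{propstar}, (\ref{propstar1}), so $A$ has BLP; since $A$ has BLP means every filter of $A$ has BLP, in particular $R$ has BLP. This settles the easy direction.

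For (\ref{p3star2})$\Rightarrow $(\ref{p3star1}): fix $x\in A$; I must produce $u\in R$ and $e\in {\cal B}(A)$ with $[x)=[u)\vee [e)$. First I would exploit that ${\rm Rad}(A/R)=\{1/R\}$ (recalled in the Preliminaries): applying $(\star )$ of $A/R$ to $x/R$ yields $v\in {\rm Rad}(A/R)$ and $f\in {\cal B}(A/R)$ with $[x/R)=[v)\vee [f)$, and since $v$ is forced to equal $1/R$ we get $[x/R)=[f)$. Because $R$ has BLP, ${\cal B}(A/R)={\cal B}(A)/R$, so $f$ lifts to some $e\in {\cal B}(A)$ with $e/R=f$. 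Unpacking $[x/R)=[e/R)$ through the general form of a principal filter and the idempotency of the Boolean element $e/R$ (Lemma \ref{aritmcbool}, (\ref{aritmcbool2})) gives the two facts $e/R\leq x/R$ and $(x/R)^n\leq e/R$ for some $n\in \N ^*$, i.e. $e\rightarrow x\in R$ and $x^n\rightarrow e\in R$.

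The main obstacle is the gap between these quotient inequalities and genuine inequalities in $A$: a relation holding modulo $R$ need not lift, so it is not obvious that $e\in [x)$. The key observation that closes this gap is that $R$ is not an arbitrary filter but the radical, whose elements have nilpotent negation (Corollary \ref{radnegnil}). Concretely, since $e$ is Boolean, Lemma \ref{aritmlr}, (\ref{aritmlr12}), and Lemma \ref{aritmcbool}, (\ref{aritmcbool1}), give $x^n\rightarrow e=\neg \, (x^n\odot \neg \, e)$; as this lies in $R$, Corollary \ref{radnegnil} makes $\neg \, \neg \, (x^n\odot \neg \, e)$ nilpotent, and combining $x^n\odot \neg \, e\leq \neg \, \neg \, (x^n\odot \neg \, e)$ (Lemma \ref{aritmlr}, (\ref{aritmlr10})) with Lemma \ref{aritmlr}, (\ref{aritmlr1}), forces $(x^n\odot \neg \, e)^j=0$ for some $j$. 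Using the idempotency of $\neg \, e$ this reads $x^{nj}\odot \neg \, e=0$, hence $x^{nj}\leq \neg \, \neg \, e=e$ by Lemma \ref{aritmlr}, (\ref{aritmlr7}), so indeed $e\in [x)$.

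Finally I would set $u=x\vee \neg \, e$. From $e/R\leq x/R$ one gets $u/R=x/R\vee \neg \, (e/R)\geq e/R\vee \neg \, (e/R)=1/R$, so $u\in R$. A short computation with the distributivity of $\odot $ over $\vee $ and $\neg \, e\odot e=0$ gives $u\odot e=x\odot e$, whence $[u)\vee [e)=[u\odot e)=[x\odot e)=[x)\vee [e)$; and since $e\in [x)$ the last filter equals $[x)$. Thus $[x)=[u)\vee [e)$ with $u\in {\rm Rad}(A)$ and $e\in {\cal B}(A)$, so $A$ satisfies $(\star )$, completing the equivalence.
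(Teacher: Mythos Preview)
Your proof is correct. The forward direction matches the paper's exactly. For the converse, both you and the paper begin by reducing, via $(\star )$ in $A/R$ together with BLP of $R$, to the situation $[x/R)=[e/R)$ for some $e\in {\cal B}(A)$; after that the two arguments diverge.

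The paper takes as its radical witness the element $v=x^n\leftrightarrow e\in R$ and argues directly that $[x)=[v)\vee [e)$. You instead first lift the quotient inequality $(x/R)^n\leq e/R$ to a genuine inequality $x^{nj}\leq e$ in $A$, using Corollary \ref{radnegnil} (negations of radical elements are nilpotent) together with the Boolean identity $x^n\rightarrow e=\neg \,(x^n\odot \neg \, e)$; this is the one substantive extra idea in your argument and it makes the passage from $A/R$ back to $A$ fully explicit. Having secured $e\in [x)$, you then choose the radical witness $u=x\vee \neg \, e$, which is precisely the construction already used in the proof of Proposition \ref{propstar}, (\ref{propstar2}); the verification that $u\in R$ and $[u)\vee [e)=[x)$ is then short and transparent. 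What your approach buys is that every inequality is justified step by step from the nilpotency property of the radical, while the paper's route via $v=x^n\leftrightarrow e$ is more compressed and leans on reading the residuation inequalities carefully.
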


\begin{proof} (\ref{p3star1})$\Rightarrow $(\ref{p3star2}): By Proposition \ref{p1star}, (\ref{p1star1}), and Proposition \ref{propstar}, (\ref{propstar1}), if $A$ satisfies $(\star )$, then $A/{\rm Rad}(A)$ satisfies $(\star )$, and $A$ has BLP, hence ${\rm Rad}(A)$ has BLP.

\noindent (\ref{p3star2})$\Rightarrow $(\ref{p3star1}): Assume that $A/{\rm Rad}(A)$ satisfies $(\star )$ and ${\rm Rad}(A)$ has BLP, and let $x\in A$, arbitrary. Since $A/{\rm Rad}(A)$ satisfies $(\star )$, it follows that there exists $u\in A$ such that $u/{\rm Rad}(A)\in {\rm Rad}(A/{\rm Rad}(A))=\{1/{\rm Rad}(A)\}$ and there exists $e\in A$ such that $e/{\rm Rad}(A)\in {\cal B}(A/{\rm Rad}(A))={\cal B}(A)/{\rm Rad}(A)$ (because ${\rm Rad}(A)$ has BLP), with the property that $[x/{\rm Rad}(A))=[u/{\rm Rad}(A))\vee [e/{\rm Rad}(A))= [u/{\rm Rad}(A)\odot e/{\rm Rad}(A))$. But then $u/{\rm Rad}(A)=1/{\rm Rad}(A)$ and there exists $f\in {\cal B}(A)$ such that $e/{\rm Rad}(A)=f/{\rm Rad}(A)$. Therefore $[x/{\rm Rad}(A))=[1/{\rm Rad}(A)\odot f/{\rm Rad}(A))=[f/{\rm Rad}(A))$, so $x/{\rm Rad}(A)\in [f/{\rm Rad}(A))$ and $f/{\rm Rad}(A)\in [x/{\rm Rad}(A))$, that is $f/{\rm Rad}(A)=f^m/{\rm Rad}(A)\leq x/{\rm Rad}(A)$ and $x^n/{\rm Rad}(A)\leq f/{\rm Rad}(A)$ for some $m,n\in \N ^*$, so $x^n/{\rm Rad}(A)\leq f/{\rm Rad}(A)=f^n/{\rm Rad}(A)\leq x^n/{\rm Rad}(A)$, thus $x^n/{\rm Rad}(A)=f/{\rm Rad}(A)$, that is $x^n\leftrightarrow f\in {\rm Rad}(A)$, by Lemma \ref{aritmcbool}, (\ref{aritmcbool2}), and Lemma \ref{aritmlr}, (\ref{aritmlr1}). So $x^n\leftrightarrow f=v$, with $v\in {\rm Rad}(A)$. Thus $(x^n\rightarrow f)\wedge (f\rightarrow x^n)=v$, hence $x^n\rightarrow f\leq v$ and $f\rightarrow x^n\leq v$, that is $x^n\leq f\odot v=v\odot f$ and $f\leq x^n\odot v\leq x^n\leq x$, by the law of residuation and Lemma \ref{aritmlr}, (\ref{aritmlr1}). Then $v\odot f\in [x)$, thus $[v)\vee [f)=[v\odot f)\subseteq [x)$, and $f\leq x$, thus $x\in [f)\subseteq [v)\vee [f)$, hence $[x)\subseteq [v)\vee [f)$. Therefore $[x)=[v)\vee [f)$, with $v\in {\rm Rad}(A)$ and $f\in {\cal B}(A)$, so $A$ satisfies $(\star )$.\end{proof}

A subset $\{x_1,\ldots ,x_n\}\subseteq A$, with $n\in \N ^*$, is said to be {\em complete} iff $\displaystyle \bigwedge _{i=1}^nx_i=0$ and $x_i\vee x_j=1$ for all $i,j\in \overline{1,n}$ such that $i\neq j$. Clearly, if $A$ is non--trivial, then any complete subset of $A$ has at least two elements. 

\begin{proposition} The following are equivalent:

\begin{enumerate}
\item\label{semiloc1} $A$ is semilocal and has BLP;
\item\label{semiloc2} $A$ is semilocal and ${\rm Rad}(A)$ has BLP;
\item\label{semiloc3} there exist $n\in \N ^*$ and a complete set $\{e_1,\ldots ,e_n\}\subseteq {\cal B}(A)$ such that $[e_1),\ldots ,[e_n)$ are local residuated lattices;
\item\label{semiloc4} $A$ is isomorphic to a finite direct product of local residuated lattices.
\end{enumerate}\label{semiloc}\end{proposition}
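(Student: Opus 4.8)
The plan is to prove the cyclic chain $(\ref{semiloc1})\Rightarrow (\ref{semiloc2})\Rightarrow (\ref{semiloc3})\Rightarrow (\ref{semiloc4})\Rightarrow (\ref{semiloc1})$, assuming throughout that $A$ is non--trivial (the trivial algebra is degenerate for condition $(\ref{semiloc3})$ and would be treated separately by the empty--product convention). The implication $(\ref{semiloc1})\Rightarrow (\ref{semiloc2})$ is immediate, since if $A$ has BLP then every filter of $A$ has BLP, in particular ${\rm Rad}(A)$. The implication $(\ref{semiloc4})\Rightarrow (\ref{semiloc1})$ is also quick: if $A\cong \prod_{i=1}^nL_i$ with each $L_i$ local, then each $L_i$ has BLP by Corollary \ref{locblp}, hence $A$ has BLP by Proposition \ref{prodblp}; moreover $|{\rm Max}(A)|=\sum_{i=1}^n|{\rm Max}(L_i)|=n$, so $A$ is semilocal.

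The technical device for the two remaining implications is a decomposition lemma, which I would establish first: if $\{e_1,\ldots ,e_n\}\subseteq {\cal B}(A)$ is complete, then $A\cong \prod_{i=1}^n[e_i)$. I would argue by induction on $n$, the case $n=1$ being trivial (there $e_1=0$ and $[0)=A$). For the step, completeness together with the distributivity of the Boolean algebra ${\cal B}(A)$ shows that $g:=\bigwedge_{i<n}e_i$ is the complement of $e_n$, so $A\cong [e_n)\times [g)$ by \cite[Proposition $2.18$]{eu3}; one then checks that $e_1,\ldots ,e_{n-1}$ form a complete set inside the residuated lattice $[g)$ (the complement of $e_i$ in $[g)$ being $\neg e_i\vee g$), that the principal filter of $e_i$ computed in $[g)$ coincides with $[e_i)$, and applies the induction hypothesis to $[g)$. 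Granting this lemma, $(\ref{semiloc3})\Rightarrow (\ref{semiloc4})$ is immediate, the factors $[e_i)$ being local by hypothesis.

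The heart of the argument is $(\ref{semiloc2})\Rightarrow (\ref{semiloc3})$. Write $\overline{A}=A/{\rm Rad}(A)$; by the preliminaries ${\rm Rad}(\overline{A})=\{1/{\rm Rad}(A)\}$ and $|{\rm Max}(\overline{A})|=|{\rm Max}(A)|=:n$, and the $n$ maximal filters of $\overline{A}$ are pairwise comaximal, since distinct maximal filters join to the whole algebra. I would then invoke the Chinese Remainder Theorem to get $\overline{A}\cong \prod_{i=1}^n\overline{A}/\overline{M_i}$, a product of $n$ simple residuated lattices; this is legitimate because residuated lattices are congruence--permutable, a Mal'cev term being $m(x,y,z)=((x\rightarrow y)\rightarrow z)\wedge ((z\rightarrow y)\rightarrow x)$, as one verifies from $x\odot (x\rightarrow y)\leq y$ (Lemma \ref{aritmlr}, (\ref{aritmlr4})). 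This product decomposition yields a complete set $\{\overline{e_1},\ldots ,\overline{e_n}\}\subseteq {\cal B}(\overline{A})$ whose $i$--th principal filter is the $i$--th (simple, hence non--trivial) factor. The hypothesis that ${\rm Rad}(A)$ has BLP says exactly ${\cal B}(\overline{A})={\cal B}(A)/{\rm Rad}(A)$, so each $\overline{e_i}$ lifts to some $e_i\in {\cal B}(A)$. I would check that $\{e_1,\ldots ,e_n\}$ is again complete: the elements $e_i\vee e_j$ and $\neg (\bigwedge_i e_i)$ are Boolean and map into $\{1/{\rm Rad}(A)\}$, hence lie in ${\cal B}(A)\cap {\rm Rad}(A)=\{1\}$ by Lemma \ref{lemaradsid}, (\ref{boolsirad}), forcing $e_i\vee e_j=1$ and $\bigwedge_i e_i=0$. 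Finally, locality of each $[e_i)$ comes from counting: the decomposition lemma gives $A\cong \prod_{i=1}^n[e_i)$ with every factor non--trivial (as $\overline{e_i}\neq \overline{1}$ forces $e_i\neq 1$), so $n=|{\rm Max}(A)|=\sum_{i=1}^n|{\rm Max}([e_i))|$ with each summand $\geq 1$; thus each summand equals $1$ and each $[e_i)$ is local.

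The main obstacle is concentrated in $(\ref{semiloc2})\Rightarrow (\ref{semiloc3})$, in three places: (a) securing the Chinese Remainder decomposition of $\overline{A}$, whose nontrivial content is the surjectivity of the canonical map onto the product and where comaximality and congruence--permutability are exactly what is needed; (b) transporting completeness faithfully across the lift, where the identity ${\cal B}(A)\cap {\rm Rad}(A)=\{1\}$ does all the work; and (c) recovering locality of the factors from the maximal--spectrum count rather than by analysing each $[e_i)$ directly. The decomposition lemma of the second paragraph, though routine, also demands care in checking that Boolean elements of $A$ lying above $g$ stay Boolean in $[g)$ and that principal filters are computed compatibly; these are the only genuinely fiddly verifications.
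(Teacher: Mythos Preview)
Your proposal is correct and follows essentially the same route as the paper: decompose $A/{\rm Rad}(A)$ as a product of simple quotients, extract a complete set of Booleans there, lift it via BLP of ${\rm Rad}(A)$, recover completeness in $A$ from ${\cal B}(A)\cap {\rm Rad}(A)=\{1\}$, and deduce locality of the factors by counting maximal filters. The only difference is cosmetic: where you sketch the Chinese Remainder decomposition via a Mal'cev term and prove the complete--set/product correspondence by induction, the paper simply cites \cite[Propositions $2.17$, $2.18$ and $5.2$]{eu3} for these facts and organizes the implications as $(\ref{semiloc1})\Rightarrow(\ref{semiloc2})\Rightarrow(\ref{semiloc3})\Leftrightarrow(\ref{semiloc4})$ together with $(\ref{semiloc3})\Rightarrow(\ref{semiloc1})$ rather than a pure cycle.
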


\begin{proof} (\ref{semiloc1})$\Rightarrow $(\ref{semiloc2}): Trivial.

\noindent (\ref{semiloc2})$\Rightarrow $(\ref{semiloc3}): Assume that $A$ is semilocal, so that $|{\rm Max}(A)|=n$ for some $n\in \N ^*$, and assume also that ${\rm Rad}(A)$ has BLP in $A$. Let ${\rm Max}(A)=\{M_1,\ldots ,M_n\}$, so $A/{\rm Rad}(A)$ is isomorphic to $\displaystyle \prod _{i=1}^nA/M_i$, by \cite[Proposition $5.2$]{eu3}. Then, according to \cite[Proposition $2.17$]{eu3}, there exists an $n\in \N ^*$ and a complete subset in $A/{\rm Rad}(A)$ $\{f_1,\ldots ,f_n\}\subseteq {\cal B}(A/{\rm Rad}(A))$ such that, for each $i\in \overline{1,n}$, $[f_i)$ is isomorphic to $A/M_i$. Since ${\rm Rad}(A)$ has BLP, it follows that there exist $e_1,\ldots ,e_n\in {\cal B}(A)$ such that $f_i=e_i/{\rm Rad}(A)$ for all $i\in \overline{1,n}$. For the sake of completeness, here we shall repeat an argument from \cite[Theorem $6.5$]{eu3}. Let $i,j\in \overline{1,n}$ such that $i\neq j$, arbitrary. The fact that $\{f_1,\ldots ,f_n\}$ is a complete subset of $A/{\rm Rad}(A)$ shows that $(e_1\wedge \ldots \wedge e_n)/{\rm Rad}(A)=f_1\wedge \ldots \wedge f_n=0/{\rm Rad}(A)$ and $(e_i\vee e_j)/{\rm Rad}(A)=f_i\vee f_j=1/{\rm Rad}(A)$, thus $\neg \, (e_1\wedge \ldots \wedge e_n)\in {\rm Rad}(A)$ and $e_i\vee e_j\in {\rm Rad}(A)$. But $e_i\vee e_j\in {\cal B}(A)$ and, as shown by Lemma \ref{aritmcbool}, (\ref{aritmcbool1}), $\neg \, (e_1\wedge \ldots \wedge e_n)\in {\cal B}(A)$ also. Now Lemma \ref{lemaradsid}, (\ref{boolsirad}) shows that $e_i\vee e_j=1$ and $\neg \, (e_1\wedge \ldots \wedge e_n)=1$, hence $e_1\wedge \ldots \wedge e_n=\neg \, \neg \, (e_1\wedge \ldots \wedge e_n)=\neg \, 1=0$, according to Lemma \ref{aritmcbool}, (\ref{aritmcbool1}), and Lemma \ref{aritmlr}, (\ref{aritmlr3}). So $\{e_1,\ldots ,e_n\}$ is a complete subset of $A$ formed of Boolean elements of $A$, hence, by \cite[Proposition $2.18$]{eu3}, $A$ is isomorphic to $\displaystyle \prod _{i=1}^n\, [e_i)$, thus $\displaystyle |{\rm Max}(A)|=\sum _{i=1}^n|{\rm Max}([e_i))|$. Remember that, for every $i\in \overline{1,n}$ $[e_i/{\rm Rad}(A))=[f_i)$ is isomorphic to $A/M_i$, which is a non--trivial residuated lattice since $M_i$ is a proper filter of $A$; thus $[e_i/{\rm Rad}(A))$ is a non--trivial residuated lattice, which clearly implies that $[e_i)$ is a non--trivial residuated lattice, therefore $|{\rm Max}([e_i))|\geq 1$ by Lemma \ref{maxsiprime}, (\ref{maxsiprime2}). So $\displaystyle n=\sum _{i=1}^n|{\rm Max}([e_i))|$ and, for each $i\in \overline{1,n}$, $|{\rm Max}([e_i))|\geq 1$, which means that $|{\rm Max}([e_i))|=1$ for all $i\in \overline{1,n}$, that is $[e_1),\ldots ,[e_n)$ are local residuated lattices. 

\noindent (\ref{semiloc3})$\Rightarrow $(\ref{semiloc1}): Let $n\in \N ^*$ and $\{e_1,\ldots ,e_n\}\subseteq {\cal B}(A)$ be a complete subset of $A$. Then, according to \cite[Proposition $2.18$]{eu3}, $A$ is isomorphic to $\displaystyle \prod _{i=1}^n\, [e_i)$, hence $\displaystyle |{\rm Max}(A)|=\sum _{i=1}^n|{\rm Max}([e_i))|$, thus, if $[e_1),\ldots ,[e_n)$ are local residuated lattices, that is $|{\rm Max}([e_1))|=\ldots =|{\rm Max}([e_n))|=1$, then $|{\rm Max}(A)|=n$, so $A$ is a semilocal residuated lattice. Moreover, by Corollary \ref{locblp} and Proposition \ref{prodblp}, it follows that $A$ has BLP.

\noindent (\ref{semiloc3})$\Rightarrow $(\ref{semiloc4}): According to \cite[Proposition $2.18$]{eu3}, if $n\in \N ^*$ and $\{e_1,\ldots ,e_n\}\subseteq {\cal B}(A)$ is a complete set, then $A$ is isomorphic to $\displaystyle \prod _{i=1}^n\, [e_i)$, thus if, for each $i\in \overline{1,n}$, $[e_i)$ is a local residuated lattice, it follows that $A$ is isomorphic to a finite direct product of local residuated lattices.

\noindent (\ref{semiloc4})$\Rightarrow $(\ref{semiloc3}): According to \cite[Proposition $2.17$]{eu3}, if $n\in \N ^*$ and $A_1,\ldots ,A_n$ are residuated lattices such that $A$ is isomorphic to $\displaystyle \prod _{i=1}^nA_i$, then there exists a complete set $\{e_1,\ldots ,e_n\}\subseteq {\cal B}(A)$ such that, for each $i\in \overline{1,n}$, $A_i$ is isomorphic to $[e_i)$, thus, if $A_i$ is local, then $[e_i)$ is a local residuated lattice as well.\end{proof}

Adopting a terminology from ring theory, we say that a residuated lattice $A$ is {\em semiperfect} iff it satisfies the equivalent conditions from Proposition \ref{semiloc}.

\begin{corollary}
Any semiperfect residuated lattice has BLP.\label{sperfect}\end{corollary}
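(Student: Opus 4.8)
The plan is essentially to unwind the definition just introduced. A residuated lattice $A$ is declared to be \emph{semiperfect} precisely when it satisfies the equivalent conditions of Proposition \ref{semiloc}. Among those conditions, item (\ref{semiloc1}) reads ``$A$ is semilocal and has BLP.'' Thus the conclusion is already contained verbatim in the definition, and the argument consists of a single observation.

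Concretely, I would argue as follows. Suppose $A$ is semiperfect. Then, by definition, $A$ satisfies all of the equivalent statements (\ref{semiloc1})--(\ref{semiloc4}) of Proposition \ref{semiloc}; in particular it satisfies (\ref{semiloc1}). Since (\ref{semiloc1}) asserts, among other things, that $A$ has BLP, we conclude immediately that $A$ has BLP.

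There is no real obstacle here: the only content is the logical equivalence established in Proposition \ref{semiloc}, which has already been proven. The corollary is a direct restatement of one of those equivalent conditions, so no further calculation, no appeal to the arithmetic characterization of Proposition \ref{caractlrblp}, and no product decomposition (as in Proposition \ref{prodblp}) is needed. The only point to be careful about is to cite the correct clause of Proposition \ref{semiloc}, namely the one containing the phrase ``has BLP,'' rather than, say, the purely structural statements (\ref{semiloc3}) or (\ref{semiloc4}), which would require re-deriving BLP via Corollary \ref{locblp} and Proposition \ref{prodblp} and thereby reproduce work already done inside the proof of Proposition \ref{semiloc}.
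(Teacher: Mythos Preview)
Your proposal is correct and matches the paper's approach exactly: the corollary is stated without proof in the paper because it is immediate from the definition of semiperfect given just before it, namely that $A$ satisfies the equivalent conditions of Proposition \ref{semiloc}, including (\ref{semiloc1}).
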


\begin{corollary} If $A$ is finite, then: $A$ has BLP iff ${\rm Rad}(A)$ has BLP.\label{lafinite}\end{corollary}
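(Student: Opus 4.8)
The plan is to reduce this statement to the already-established equivalence between conditions (\ref{semiloc1}) and (\ref{semiloc2}) in Proposition \ref{semiloc}, whose hypothesis is that $A$ be semilocal. So the first and essentially only substantive step is to observe that a finite residuated lattice is automatically semilocal: if $A$ is finite, then $A$ has only finitely many subsets, hence only finitely many filters, and in particular ${\rm Max}(A)$ is a finite set. By the definition of semilocal given in Section \ref{preliminaries}, this means $A$ is semilocal.

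Granted that, I would simply invoke the equivalence (\ref{semiloc1})$\Leftrightarrow $(\ref{semiloc2}) of Proposition \ref{semiloc}, which states that ``$A$ is semilocal and has BLP'' is equivalent to ``$A$ is semilocal and ${\rm Rad}(A)$ has BLP''. Since the semilocality of $A$ is now known to hold unconditionally (because $A$ is finite), the clause ``$A$ is semilocal'' can be cancelled from both sides, leaving exactly ``$A$ has BLP iff ${\rm Rad}(A)$ has BLP'', which is the desired conclusion.

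There is essentially no obstacle here: all the real work is done inside Proposition \ref{semiloc} (in particular the implication (\ref{semiloc2})$\Rightarrow $(\ref{semiloc3}), where the lifting of a complete set of Boolean idempotents through ${\rm Rad}(A)$ is carried out). The only thing to be careful about is to state clearly that finiteness is being used solely to guarantee semilocality, and that once semilocality is in hand the two BLP conditions are interchangeable by the cited proposition. Thus the proof will amount to two sentences: finite $\Rightarrow $ semilocal, then apply Proposition \ref{semiloc}, (\ref{semiloc1})$\Leftrightarrow $(\ref{semiloc2}).
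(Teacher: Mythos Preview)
Your proposal is correct and matches the paper's intended argument: the corollary is stated without proof immediately after Proposition \ref{semiloc}, and the paper has already noted in Section \ref{preliminaries} that finite residuated lattices are semilocal, so the result follows at once from the equivalence (\ref{semiloc1})$\Leftrightarrow$(\ref{semiloc2}) of Proposition \ref{semiloc}, exactly as you describe.
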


\begin{corollary}
If ${\rm  Rad}(A)$ has BLP, then the following are equivalent:

\begin{enumerate}
\item $A$ is semilocal;
\item $A$ is isomorphic to a finite direct product of local residuated lattices.
\end{enumerate}\label{corsemiloc}\end{corollary}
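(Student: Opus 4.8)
The plan is to read this off directly from Proposition \ref{semiloc}, which already bundles together the four equivalent conditions on semilocal residuated lattices whose radical has BLP. The standing hypothesis here is that ${\rm Rad}(A)$ has BLP, so I would keep this fixed throughout and prove the two implications between the conditions (i) ``$A$ is semilocal'' and (ii) ``$A$ is isomorphic to a finite direct product of local residuated lattices''.

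For the implication (i)$\Rightarrow$(ii), I would observe that the conjunction of the standing hypothesis (${\rm Rad}(A)$ has BLP) with (i) ($A$ is semilocal) is precisely statement (\ref{semiloc2}) of Proposition \ref{semiloc}. Since statements (\ref{semiloc2}) and (\ref{semiloc4}) of that proposition are equivalent, it follows at once that $A$ is isomorphic to a finite direct product of local residuated lattices, which is exactly (ii).

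For the converse (ii)$\Rightarrow$(i), I would note that (ii) is literally statement (\ref{semiloc4}) of Proposition \ref{semiloc}, which is equivalent to statement (\ref{semiloc1}) of that proposition; and (\ref{semiloc1}) asserts in particular that $A$ is semilocal, giving (i). It is worth remarking that this direction does not even require the hypothesis on ${\rm Rad}(A)$: a finite direct product of local (hence semilocal) residuated lattices is always semilocal, since by Section \ref{prods} the maximal spectrum of a finite product is the disjoint union of the factors' maximal spectra.

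There is essentially no obstacle to overcome: all of the genuine content — the construction of the complete set of Boolean elements from the BLP of the radical, and the decomposition of $A$ via the principal filters $[e_i)$ — has already been carried out in the proof of Proposition \ref{semiloc}. The only subtlety, and the sole place where the standing hypothesis is used, is in the forward direction, where ``$A$ semilocal'' alone is not enough and must be promoted to condition (\ref{semiloc2}) by adjoining the assumption that ${\rm Rad}(A)$ has BLP. Thus the corollary is simply the restriction of Proposition \ref{semiloc} to the regime in which that assumption holds.
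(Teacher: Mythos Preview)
Your proposal is correct and matches the paper's approach: the corollary is stated there without proof, as an immediate consequence of Proposition~\ref{semiloc}, and your derivation via conditions (\ref{semiloc2}) and (\ref{semiloc4}) of that proposition is exactly how it is meant to be read off.
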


\begin{corollary}
The following are equivalent:

\begin{enumerate}
\item $A$ is a semilocal MV--algebra (respectively a semilocal BL--algebra);
\item $A$ is isomorphic to a finite direct product of local MV--algebras (respectively BL--algebras).
\end{enumerate}\end{corollary}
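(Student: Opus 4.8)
The plan is to read this corollary as the specialization of Corollary \ref{corsemiloc} to the subvarieties of MV--algebras and of BL--algebras inside the variety of residuated lattices, the only extra ingredient being that in both of these classes the radical automatically has BLP. First I would record the two structural facts I intend to use. On the one hand, MV--algebras and BL--algebras are equational classes of residuated lattices (MV--algebras being in particular BL--algebras), so each of these classes is closed under the formation of quotients, subalgebras and direct products. On the other hand, by the Remark citing \cite{figele}, \cite{adinggll}, \cite{leo}, the radical of every BL--algebra has BLP, and hence so does the radical of every MV--algebra. Consequently, for an $A$ in either class the hypothesis ``${\rm Rad}(A)$ has BLP'' of Corollary \ref{corsemiloc} is satisfied for free, and the whole proof reduces to transporting the equivalence of Corollary \ref{corsemiloc} into these subvarieties.

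For the implication (i)$\Rightarrow$(ii), I would start from a semilocal $A$ in the chosen class. Since ${\rm Rad}(A)$ has BLP, Corollary \ref{corsemiloc} gives a residuated lattice isomorphism $A\cong\prod_{i=1}^n[e_i)$ onto a finite direct product of \emph{local residuated lattices} $[e_i)$, where $\{e_1,\dots ,e_n\}\subseteq {\cal B}(A)$ is the complete set furnished by Proposition \ref{semiloc}. The point that needs checking is that each local factor is actually a local algebra of the prescribed kind. For this I would observe that the canonical projection $pr_i\colon A\to [e_i)$ is a surjective morphism, so $[e_i)$ is a homomorphic image of $A$ and therefore, by the varietal closure just recalled, again an MV--algebra (respectively a BL--algebra); and since being local is an intrinsic residuated--lattice property, $[e_i)$ is a local MV--algebra (respectively a local BL--algebra). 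Thus $A$ is isomorphic to a finite direct product of such local algebras.

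For the converse (ii)$\Rightarrow$(i), let $A\cong\prod_{i=1}^nL_i$ with each $L_i$ a local MV--algebra (respectively BL--algebra). By closure under finite direct products, this product is again an MV--algebra (respectively a BL--algebra). It remains to see that it is semilocal, which follows because each $L_i$, being local, has $|{\rm Max}(L_i)|=1$, whence $|{\rm Max}(A)|=\sum_{i=1}^n|{\rm Max}(L_i)|=n$ by the formula for the maximal spectrum of a finite product recalled in Section \ref{prods} (and already used inside Proposition \ref{semiloc}). Hence $A$ is a semilocal MV--algebra (respectively BL--algebra), completing the equivalence.

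I expect the only delicate point to be the bookkeeping of the ``respectively'' clause, namely confirming that the local residuated lattices produced by Corollary \ref{corsemiloc} land back in the same subvariety one started from, and dually that a finite product of local algebras of a given kind stays of that kind. Once the closure of MV--algebras and BL--algebras under quotients and finite products is invoked, everything else is a direct transcription of Corollary \ref{corsemiloc} together with the maximal--spectrum formula for finite products.
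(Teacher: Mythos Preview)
Your proof is correct and follows essentially the same route as the paper: both reduce the statement to Corollary \ref{corsemiloc} after noting that in MV-- and BL--algebras the radical automatically has BLP, and both handle the ``respectively'' clause by observing that the local factors produced by the decomposition remain in the same variety. The only cosmetic difference is that the paper invokes the slightly stronger fact that every MV--algebra (resp.\ BL--algebra) has full BLP (citing \cite{figele} and \cite{leo}) rather than just BLP for the radical, and phrases the closure argument as ``if an MV--algebra is isomorphic to a finite direct product of residuated lattices, then each factor is an MV--algebra'' instead of appealing to projections being surjective morphisms; these are equivalent formulations of the same varietal closure.
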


\begin{proof} According to \cite[Proposition $5$]{figele}, any MV--algebra has BLP. According to \cite[Lemma $2.7.6$]{leo}, any BL--algebra has BLP. Clearly, if an MV--algebra (respectively a BL--algebra) is isomorphic to a finite direct product of residuated lattices, then each of those residuated lattices is an MV--algebra (respectively a BL--algebra). Now apply Corollary \ref{corsemiloc}.\end{proof}

\begin{corollary}
If $A$ is semilocal and ${\rm Rad}(A)$ has BLP, then $A$ satisfies $(\star )$.\label{cortare}\end{corollary}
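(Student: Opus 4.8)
The plan is to assemble this corollary directly from the machinery already in place, treating it as a short consequence of Propositions \ref{semiloc}, \ref{maitare} and \ref{p2starbis}. First I would observe that the hypothesis ``$A$ is semilocal and ${\rm Rad}(A)$ has BLP'' is precisely condition (\ref{semiloc2}) of Proposition \ref{semiloc}. By the equivalence of (\ref{semiloc2}) and (\ref{semiloc4}) in that proposition, $A$ is then isomorphic to a finite direct product of local residuated lattices; write $A\cong \prod _{i=1}^n A_i$ with $n\in \N ^*$ and each $A_i$ local.

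Next I would invoke Proposition \ref{maitare}, which asserts that every local residuated lattice satisfies $(\star )$. Applying this to each factor gives that $A_i$ satisfies $(\star )$ for every $i\in \overline{1,n}$. Then I would apply the converse direction of Proposition \ref{p2starbis}, (\ref{p2starbis1}): for a finite family, the product satisfies $(\star )$ as soon as each factor does. Hence $\prod _{i=1}^n A_i$ satisfies $(\star )$.

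Finally, to transfer $(\star )$ back to $A$ itself, I would note that $(\star )$ is formulated entirely in terms of the residuated lattice operations, the Boolean center ${\cal B}(A)$, the radical ${\rm Rad}(A)$, and principal filters $[x)$, all of which are preserved by residuated lattice isomorphisms. Thus the isomorphism $A\cong \prod _{i=1}^n A_i$ carries $(\star )$ from the product onto $A$, which is the desired conclusion.

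The main obstacle here is essentially nonexistent: the statement is a genuine corollary, and every ingredient has already been proved. The only point deserving explicit mention is the isomorphism-invariance of $(\star )$; this is routine, but one should record it rather than silently identify $A$ with the product. One could even sidestep that remark by applying Proposition \ref{p2starbis} to the product and noting that ``satisfies $(\star )$'' is the property being transported along the fixed isomorphism provided by Proposition \ref{semiloc}.
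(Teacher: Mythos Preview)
Your proof is correct and follows essentially the same approach as the paper: decompose $A$ as a finite direct product of local residuated lattices, use that local residuated lattices satisfy $(\star )$, and then apply Proposition \ref{p2starbis}. The paper's one--line proof cites Corollary \ref{corsemiloc} in place of Proposition \ref{semiloc} (an immediate consequence), and --- apparently by a slip --- Corollary \ref{maitare1} (chains satisfy $(\star )$) rather than Proposition \ref{maitare} (local residuated lattices satisfy $(\star )$); your citation of Proposition \ref{maitare} is the appropriate one, since the factors produced are local, not a priori linearly ordered.
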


\begin{proof} By Corollary \ref{corsemiloc}, Corollary \ref{maitare1} and Proposition \ref{p2starbis}.\end{proof}

We can rephrase the previous corollary like this:

\begin{corollary}
Any semiperfect residuated lattice satisfies $(\star )$.\label{totcortare}\end{corollary}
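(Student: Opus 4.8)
The plan is to observe that the statement is simply a reformulation of Corollary \ref{cortare} obtained by unfolding the definition of semiperfectness. Recall that a residuated lattice $A$ is called semiperfect precisely when it satisfies the equivalent conditions of Proposition \ref{semiloc}. In particular, semiperfectness is equivalent to condition (\ref{semiloc2}) of that proposition, namely that $A$ is semilocal and ${\rm Rad}(A)$ has BLP. But this is exactly the hypothesis of Corollary \ref{cortare}, whose conclusion is that $A$ satisfies $(\star )$. Hence the result follows at once, and the entire proof amounts to the remark ``by the definition of semiperfect and Corollary \ref{cortare}.''

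If one prefers a self-contained argument that does not route through Corollary \ref{cortare}, the plan is to use the product form of semiperfectness instead. By Proposition \ref{semiloc}, condition (\ref{semiloc4}), a semiperfect residuated lattice $A$ is isomorphic to a finite direct product $\prod _{i=1}^n A_i$ of local residuated lattices. Each local factor $A_i$ satisfies $(\star )$ by Proposition \ref{maitare}. Since $(\star )$ is preserved under finite direct products (Proposition \ref{p2starbis}, (\ref{p2starbis1})), I would conclude that $A$ satisfies $(\star )$ as well. The only thing to check here is that $(\star )$ is an isomorphism-invariant property, which is clear because it is phrased purely in terms of principal filters, the Boolean center and the radical, all of which are preserved by residuated lattice isomorphisms; so transporting the property along the isomorphism $A\cong \prod _{i=1}^n A_i$ is harmless.

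There is no serious obstacle in this corollary itself, since its content has already been established in Corollary \ref{cortare}; the single point to make explicit is that unfolding the definition of semiperfect supplies exactly the hypotheses needed. The genuinely substantive work lies in the two earlier ingredients: that local residuated lattices satisfy $(\star )$ (Proposition \ref{maitare}), which rests on the decomposition $A={\rm Rad}(A)\cup N(A)$ from Proposition \ref{caractloc} together with Lemma \ref{boolradnil}; and that $(\star )$ passes to finite products (Proposition \ref{p2starbis}), whose proof requires choosing a common exponent across the finitely many coordinates so that the principal-filter equalities $[x_i)=[u_i)\vee [e_i)$ can be witnessed simultaneously in the product.
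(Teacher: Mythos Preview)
Your proposal is correct and matches the paper's approach exactly: the paper introduces Corollary \ref{totcortare} with the sentence ``We can rephrase the previous corollary like this,'' so its entire proof is precisely your first observation that semiperfect unfolds to the hypothesis of Corollary \ref{cortare}. Your alternative argument via the product form (Proposition \ref{semiloc}, (\ref{semiloc4}), then Proposition \ref{maitare} and Proposition \ref{p2starbis}) is also correct and is in fact how the paper proves Corollary \ref{cortare} itself.
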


\begin{corollary}
Any residuated lattice $A$ with BLP which does not satisfy $(\star )$ has ${\rm Max}(A)$ infinite. Equivalently: if a residuated lattice $A$ is such that ${\rm Rad}(A)$ has BLP and $A$ does not satisfy $(\star )$, then $A$ has ${\rm Max}(A)$ infinite.\label{mymy}\end{corollary}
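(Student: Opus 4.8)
The plan is to recognize that this corollary is nothing more than the contrapositive of Corollary \ref{cortare}, once we unravel the definition of semilocality and the fact that BLP propagates to the radical. Recall that, by definition, $A$ is semilocal exactly when ${\rm Max}(A)$ is finite; hence ``${\rm Max}(A)$ is infinite'' is precisely the negation of the assertion ``$A$ is semilocal''.

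First I would treat the second (stronger) formulation. Assume that ${\rm Rad}(A)$ has BLP and that $A$ does not satisfy $(\star )$. Corollary \ref{cortare} asserts that the conjunction ``$A$ is semilocal and ${\rm Rad}(A)$ has BLP'' forces $A$ to satisfy $(\star )$. Taking the contrapositive, the failure of $(\star )$ rules out that conjunction, so either $A$ is not semilocal or ${\rm Rad}(A)$ does not have BLP. Since we are assuming the latter disjunct to be false, we conclude that $A$ is not semilocal, i.e.\ ${\rm Max}(A)$ is infinite.

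To obtain the first formulation, I would simply observe that if $A$ has BLP then, by the very definition of BLP (every filter of $A$ has BLP), the filter ${\rm Rad}(A)$ has BLP in particular; the hypotheses of the first formulation therefore specialize those of the second, and the conclusion carries over verbatim. This also justifies the word ``equivalently'': the second formulation is obtained from the first by weakening the hypothesis ``$A$ has BLP'' to ``${\rm Rad}(A)$ has BLP'', which is legitimate precisely because Corollary \ref{cortare} only ever uses the BLP of the radical.

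There is essentially no computational obstacle here; the only point requiring a little care is the logical bookkeeping of the contrapositive --- making sure we eliminate the correct disjunct by invoking the standing hypothesis that ${\rm Rad}(A)$ has BLP --- together with the routine but essential reminder that semilocality is, by definition, the finiteness of ${\rm Max}(A)$.
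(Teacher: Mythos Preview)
Your proof is correct and is exactly the intended argument: the corollary is the contrapositive of Corollary~\ref{cortare}, together with the observation that BLP for $A$ implies BLP for ${\rm Rad}(A)$ and that semilocality means precisely that ${\rm Max}(A)$ is finite. The paper gives no separate proof for this corollary, and your unpacking of the logical structure (including the remark on why the two formulations deserve the word ``equivalently'') matches the paper's implicit reasoning.
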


\begin{corollary}
If $A$ is finite and it has BLP, then $A$ satisfies $(\star )$. Equivalently: if $A$ is finite and ${\rm Rad}(A)$ has BLP, then $A$ satisfies $(\star )$.\label{wowwow}\end{corollary}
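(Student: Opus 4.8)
The plan is to reduce the statement to Corollary \ref{cortare} by observing that finiteness forces semilocality. First I would note that a finite residuated lattice $A$ has only finitely many subsets, hence only finitely many filters, hence in particular ${\rm Max}(A)$ is finite; so every finite residuated lattice is automatically semilocal. This is the only genuinely new observation needed, and it is immediate.

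With semilocality in hand, the main line of argument is short. Assuming $A$ has BLP, every filter of $A$ has BLP by definition, so in particular ${\rm Rad}(A)$ has BLP. Thus $A$ is a semilocal residuated lattice whose radical has BLP, and Corollary \ref{cortare} delivers directly that $A$ satisfies $(\star )$. (Equivalently, one could invoke Corollary \ref{totcortare}: since a semilocal residuated lattice with ${\rm Rad}(A)$ having BLP is, by Proposition \ref{semiloc}, exactly a semiperfect residuated lattice, and semiperfect residuated lattices satisfy $(\star )$.)

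For the second, equivalent formulation, I would appeal to Corollary \ref{lafinite}, which states that for finite $A$ the conditions ``$A$ has BLP'' and ``${\rm Rad}(A)$ has BLP'' coincide. Therefore the hypothesis ``$A$ is finite and ${\rm Rad}(A)$ has BLP'' is merely a restatement of ``$A$ is finite and $A$ has BLP,'' and the conclusion follows from the first part with no further work.

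There is essentially no hard part here: this corollary is a direct specialization of Corollary \ref{cortare}, and the entire content of the proof is the trivial remark that finiteness implies ${\rm Max}(A)$ is finite. The only thing to be careful about is to phrase the two hypotheses as interchangeable via Corollary \ref{lafinite}, so that both sentences of the statement are seen to reduce to the same application of Corollary \ref{cortare}.
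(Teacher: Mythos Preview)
Your proposal is correct and follows exactly the approach the paper intends: the corollary is stated without proof in the paper precisely because it is an immediate specialization of Corollary \ref{cortare} (or equivalently \ref{totcortare}) via the trivial observation that finite residuated lattices are semilocal, together with Corollary \ref{lafinite} to pass between the two formulations.
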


The previous results suggest that solving the following open problem may prove difficult.

\begin{openproblem} \begin{enumerate}
\item Find a residuated lattice with BLP that does not satisfy $(\star )$.
\item Find a residuated lattice without BLP that satisfies $(\star \star )$.\end{enumerate}\end{openproblem}

$A$ is called a {\em maximal residuated lattice} iff, for any index set $I$, any family $(a_i)_{i\in I}\subseteq A$ and any family $(F_i)_{i\in I}\subseteq {\cal F}(A)$, if these families have the property that, given any finite subset $J$ of $I$, there exists $x_J\in A$ such that $x_J/F_i=a_i/F_i$ for all $i\in J$, then there exists $x\in A$ such that $x/F_i=a_i/F_i$ for all $i\in I$. Clearly, all residuated lattices $A$ with ${\cal F}(A)$ finite are maximal (the converse is known to be false \cite{figele}), thus finite residuated lattices are maximal, and simple residuated lattices are maximal.

\begin{proposition}
The following are equivalent:

\begin{enumerate}
\item\label{maximal0} $A$ is maximal and has BLP;
\item\label{maximal1} $A$ is maximal and ${\rm Rad}(A)$ has BLP;
\item\label{maximal2} $A$ is isomorphic to a finite direct product of local maximal residuated lattices.
\end{enumerate}\label{maximal}\label{resultsflow}\end{proposition}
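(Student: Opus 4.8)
The plan is to reduce everything to the semiperfect characterization already obtained in Proposition \ref{semiloc}, handling the interaction with maximality by two routine closure properties plus one genuinely new finiteness argument. The implication (\ref{maximal0})$\Rightarrow $(\ref{maximal1}) is immediate, since full BLP trivially entails that the single filter ${\rm Rad}(A)$ has BLP. I would first isolate two facts about maximality, both proved directly from the definition together with the description of filters and quotients of finite products recalled in Section \ref{prods} (namely ${\cal F}(\prod _{i=1}^nA_i)\cong \prod _{i=1}^n{\cal F}(A_i)$ and $A/F\cong \prod _{i=1}^nA_i/F_i$ from Proposition \ref{strongprodblp}, (\ref{strongprodblp1})): a finite direct product of maximal residuated lattices is maximal, and each direct factor of a maximal finite product is itself maximal. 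For the product, a system in $\prod _{i=1}^nA_i$ splits componentwise, finite compatibility descends to each component, each component system is solved by maximality of $A_i$, and the solutions are recombined. For a factor $A_k$, one lifts a system to the product by taking the improper filter $A_i$ in the coordinates $i\neq k$ (rendering those constraints vacuous), solves in the product, and projects back via $pr_k$.

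With these in hand, (\ref{maximal2})$\Rightarrow $(\ref{maximal0}) is quick: if $A\cong \prod _{i=1}^nA_i$ with each $A_i$ local and maximal, then each $A_i$ has BLP by Corollary \ref{locblp}, so $A$ has BLP by Proposition \ref{prodblp}, and $A$ is maximal by the product-closure fact above.

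The heart of the proof is (\ref{maximal1})$\Rightarrow $(\ref{maximal2}). Assuming $A$ maximal with ${\rm Rad}(A)$ having BLP, I would show that $A$ is semilocal and then invoke Proposition \ref{semiloc}, (\ref{semiloc2})$\Rightarrow $(\ref{semiloc4}), to get $A\cong \prod _{i=1}^n[e_i)$ with each $[e_i)$ local; since each $[e_i)$ is a direct factor of the maximal lattice $A$, it is maximal by the factor-closure fact, which yields (\ref{maximal2}). To prove semilocality, write ${\rm Max}(A)=\{M_i\}_{i\in I}$. Distinct maximal filters are comaximal (their join properly contains a maximal filter, hence equals $A$), so for each finite $J\subseteq I$ the Chinese Remainder map $A\rightarrow \prod _{i\in J}A/M_i$ is onto; maximality of $A$ upgrades this to surjectivity of $A\rightarrow \prod _{i\in I}A/M_i$, whose kernel congruence is $\bigcap _{i\in I}M_i={\rm Rad}(A)$. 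Hence $A/{\rm Rad}(A)\cong \prod _{i\in I}A/M_i$, a full product of simple residuated lattices, so (using ${\cal B}(\prod _iA/M_i)=\prod _i{\cal B}(A/M_i)$ from Section \ref{prods} and ${\cal B}(A/M_i)=\{0,1\}$) we get ${\cal B}(A/{\rm Rad}(A))\cong \{0,1\}^I$. Because ${\rm Rad}(A)$ has BLP and ${\cal B}(p_{{\rm Rad}(A)})$ is injective by Corollary \ref{filtinrad}, (\ref{filtinrad1}), the morphism ${\cal B}(p_{{\rm Rad}(A)})$ is a Boolean isomorphism, whence ${\cal B}(A)\cong \{0,1\}^I$.

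Finally I would force $I$ to be finite by a spectrum-counting argument. Every ultrafilter $\mathfrak{u}$ of the Boolean algebra ${\cal B}(A)$ generates a proper filter of $A$ (it avoids $0$), which extends to some $M\in {\rm Max}(A)$ by Lemma \ref{maxsiprime}, (\ref{maxsiprime2}), and then $M\cap {\cal B}(A)=\mathfrak{u}$ by maximality of $\mathfrak{u}$ among proper filters of ${\cal B}(A)$; thus restriction gives a surjection ${\rm Max}(A)\rightarrow {\rm Ult}({\cal B}(A))$, so $|I|=|{\rm Max}(A)|\geq |{\rm Ult}({\cal B}(A))|=|{\rm Ult}(\{0,1\}^I)|$. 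If $I$ were infinite this is impossible, since the power-set Boolean algebra $\{0,1\}^I\cong {\cal P}(I)$ has strictly more than $|I|$ ultrafilters. Hence ${\rm Max}(A)$ is finite, i.e.\ $A$ is semilocal. The main obstacle is precisely this step, deducing semilocality from maximality together with ${\rm Rad}(A)$-BLP; it is essentially the content of the structure theorem for maximal residuated lattices with radical BLP established in \cite[Theorems $6.5$ and $6.6$]{eu3}, on which one may alternatively rely for (\ref{maximal1})$\Rightarrow $(\ref{maximal2}) outright.
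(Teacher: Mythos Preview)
Your proof is correct and shares the same skeleton as the paper's: reduce to Proposition \ref{semiloc}, and handle maximality via closure under finite products and passage to direct factors. The genuine difference is in (\ref{maximal1})$\Rightarrow $(\ref{maximal2}). The paper simply invokes \cite[Proposition $6.2$]{eu3}, which states that \emph{every} maximal residuated lattice is semilocal, with no BLP hypothesis whatsoever, and then \cite[Proposition $6.3$]{eu3} for maximality of $[e)$. You instead give a self-contained argument that actually uses the BLP of ${\rm Rad}(A)$: the Chinese Remainder surjection plus maximality yields $A/{\rm Rad}(A)\cong \prod _{i\in I}A/M_i$, hence ${\cal B}(A)\cong \{0,1\}^I$ via the bijectivity of ${\cal B}(p_{{\rm Rad}(A)})$, and then the ultrafilter count on ${\cal P}(I)$ forces $I$ finite. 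This is a legitimate and interesting alternative, but note that it proves strictly less than the cited result (you need ${\rm Rad}(A)$ to have BLP to conclude semilocality, whereas \cite[Proposition $6.2$]{eu3} does not), and it imports a nontrivial set-theoretic fact (Posp\'{\i}\v{s}il-type cardinality of ultrafilter spaces). Your closing remark that one may alternatively rely on \cite[Theorems $6.5$ and $6.6$]{eu3} is exactly the paper's move; similarly, your closure facts are precisely \cite[Propositions $6.3$ and $6.4$]{eu3}.
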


\begin{proof} (\ref{maximal0})$\Rightarrow $(\ref{maximal1}): Trivial.

\noindent (\ref{maximal1})$\Rightarrow $(\ref{maximal2}): According to \cite[Proposition $6.2$]{eu3}, any maximal residuated lattice is semilocal. According to \cite[Proposition $6.3$]{eu3}, if $A$ is maximal and $e\in {\cal B}(A)$, then $[e)$ is a maximal residuated lattice. Now apply Proposition \ref{semiloc}.

\noindent (\ref{maximal2})$\Rightarrow $(\ref{maximal0}): According to \cite[Proposition $6.4$]{eu3}, a finite direct product of maximal residuated lattices is a maximal residuated lattice. By Proposition \ref{semiloc}, a finite direct product of local residuated lattices has BLP.\end{proof}

\begin{corollary}
If ${\rm Rad}(A)$ has BLP, then the following are equivalent:

\begin{enumerate}
\item $A$ is maximal;
\item $A$ is isomorphic to a finite direct product of local maximal residuated lattices.
\end{enumerate}\label{furtherflow}\end{corollary}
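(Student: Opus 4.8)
The plan is to read this corollary off directly from Proposition \ref{resultsflow} (equivalently Proposition \ref{maximal}), using the standing hypothesis that ${\rm Rad}(A)$ has BLP to bridge between the two listed conditions. The entire substance has already been proven in that proposition, so the task reduces to matching the corollary's two conditions against the appropriate items (\ref{maximal0}), (\ref{maximal1}), (\ref{maximal2}) there.

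First I would handle the implication (i)$\,\Rightarrow\,$(ii). Assuming $A$ is maximal and combining this with the corollary's hypothesis that ${\rm Rad}(A)$ has BLP, we obtain precisely condition (\ref{maximal1}) of Proposition \ref{maximal}. The equivalence (\ref{maximal1})$\,\Leftrightarrow\,$(\ref{maximal2}) then yields that $A$ is isomorphic to a finite direct product of local maximal residuated lattices, which is exactly statement (ii). For the converse (ii)$\,\Rightarrow\,$(i), I would assume $A$ is isomorphic to a finite direct product of local maximal residuated lattices, that is, condition (\ref{maximal2}) holds; the equivalence (\ref{maximal2})$\,\Leftrightarrow\,$(\ref{maximal0}) gives that $A$ is maximal and has BLP, and in particular $A$ is maximal, which is (i). It is worth noting that this second direction does not even invoke the hypothesis on ${\rm Rad}(A)$, since the conclusion of (\ref{maximal0}) is strictly stronger than what (i) asserts.

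I do not anticipate any genuine obstacle here: all the hard work (the semilocal decomposition from Proposition \ref{semiloc}, together with \cite[Propositions $6.2$--$6.4$]{eu3}) is packaged inside Proposition \ref{maximal}. The only point requiring a little care is bookkeeping, namely to make sure the corollary's hypothesis ``${\rm Rad}(A)$ has BLP'' is supplied exactly where item (\ref{maximal1}) demands it, so that the forward direction legitimately lands in the scope of the proposition. Accordingly, the proof will consist of two short sentences, each citing one half of the relevant equivalence in Proposition \ref{maximal}.
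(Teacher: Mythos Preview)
Your proposal is correct and matches the paper's approach: the corollary is stated without proof in the paper, being an immediate consequence of Proposition \ref{maximal} exactly as you describe. Your careful tracking of which direction uses the hypothesis on ${\rm Rad}(A)$ is accurate and slightly more explicit than what the paper bothers to record.
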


\begin{proposition} The following are equivalent:

\begin{enumerate}
\item\label{locprop1} $A$ is local;
\item\label{locprop0} $A$ is quasi--local and ${\cal B}(A)=\{0,1\}$; 
\item\label{locprop2} $A$ has BLP and ${\cal B}(A)=\{0,1\}$;
\item\label{locprop3} $A$ satisfies $(\star )$ and ${\cal B}(A)=\{0,1\}$;
\item\label{locprop4} $A=N(A)\cup \{x\in A\ |\ \neg \, x\in N(A)\}$ and ${\cal B}(A)=\{0,1\}$.
\end{enumerate}\label{locprop}\end{proposition}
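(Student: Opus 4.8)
The plan is to prove the five statements equivalent through the single cycle
$(\ref{locprop1})\Rightarrow(\ref{locprop3})\Rightarrow(\ref{locprop2})\Rightarrow(\ref{locprop0})\Rightarrow(\ref{locprop4})\Rightarrow(\ref{locprop1})$, since four of these arrows fall out almost immediately from results already available, leaving a single genuine argument to carry out. First I would dispatch the easy arrows. For $(\ref{locprop1})\Rightarrow(\ref{locprop3})$, Lemma \ref{implicdir} supplies ${\cal B}(A)=\{0,1\}$ for any local $A$, while Proposition \ref{maitare} supplies that $A$ satisfies $(\star )$. The arrow $(\ref{locprop3})\Rightarrow(\ref{locprop2})$ is Proposition \ref{propstar}, (\ref{propstar1}) (which states $(\star )\Rightarrow BLP$), with the clause ${\cal B}(A)=\{0,1\}$ simply carried along. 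The arrow $(\ref{locprop2})\Rightarrow(\ref{locprop0})$ is the equivalence $BLP\Leftrightarrow$ quasi--local from Proposition \ref{qlocblp}, again carrying ${\cal B}(A)=\{0,1\}$ unchanged.

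Next, $(\ref{locprop0})\Rightarrow(\ref{locprop4})$ is a short computation that exploits ${\cal B}(A)=\{0,1\}$. Given $a\in A$, quasi--locality produces $e\in{\cal B}(A)=\{0,1\}$ and $n\in\N ^*$ with $a^n\odot e=0$ and $(\neg \, a)^n\odot \neg \, e=0$. If $e=1$, the first equation reads $a^n=0$, so $a\in N(A)$; if $e=0$, then $\neg \, e=1$ and the second reads $(\neg \, a)^n=0$, so $\neg \, a\in N(A)$. Hence $A=N(A)\cup \{x\in A\mid \neg \, x\in N(A)\}$, which is exactly $(\ref{locprop4})$.

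The main obstacle will be the closing arrow $(\ref{locprop4})\Rightarrow(\ref{locprop1})$, and the key idea is to exploit the description of the radical in Lemma \ref{radunit}. I would show that every non--nilpotent element lies in the radical: let $a\in A\setminus N(A)$. For each $n\in\N ^*$ the power $a^n$ is again non--nilpotent, since $(a^n)^k=a^{nk}$ and so a vanishing power of $a^n$ would give a vanishing power of $a$. Applying the hypothesis $(\ref{locprop4})$ to the element $a^n$ therefore forces $\neg \, (a^n)\in N(A)$, i.e. $(\neg \, a^n)^{k_n}=0$ for some $k_n\in\N ^*$. As this holds for every $n$, Lemma \ref{radunit} yields $a\in{\rm Rad}(A)$. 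Thus $A\setminus N(A)\subseteq{\rm Rad}(A)$, whence $A=N(A)\cup{\rm Rad}(A)$, which is condition (\ref{caractloc2}) of Proposition \ref{caractloc}; by the equivalence (\ref{caractloc1})$\Leftrightarrow$(\ref{caractloc2}) there, $A$ is local.

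Two points worth flagging in the write--up. First, this last argument does not actually use ${\cal B}(A)=\{0,1\}$, so that clause in $(\ref{locprop4})$ is redundant once the nilpotent--covering condition is assumed; I would simply not invoke it and let the cycle supply the equivalence. Second, the only delicate bookkeeping is the degenerate case, where one should make sure the reduction to Proposition \ref{caractloc} is read off its own convention for the trivial residuated lattice rather than re-derived. With those caveats, the cycle is complete and every arrow but the last is a one--line citation.
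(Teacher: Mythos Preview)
Your proof is correct and shares the first three arrows with the paper verbatim. The difference lies in how the cycle is closed. The paper links $(\ref{locprop2})$ and $(\ref{locprop4})$ directly via Remark \ref{booltriv} and Proposition \ref{caractlrblp} (since for ${\cal B}(A)=\{0,1\}$ one has $S(A)=N(A)\cup\{x\mid\neg\,x\in N(A)\}$, so BLP $\Leftrightarrow S(A)=A$ gives exactly $(\ref{locprop4})$), and then closes with $(\ref{locprop0})\Rightarrow(\ref{locprop1})$ using the B--normality clause of Proposition \ref{qlocblp} together with the product--of--nilpotents criterion in Proposition \ref{caractloc}. You instead go $(\ref{locprop0})\Rightarrow(\ref{locprop4})$ by reading off the quasi--local definition with $e\in\{0,1\}$, and close with $(\ref{locprop4})\Rightarrow(\ref{locprop1})$ via Lemma \ref{radunit} and the union criterion $A=N(A)\cup{\rm Rad}(A)$ in Proposition \ref{caractloc}. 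Your route has the pleasant side effect, which you note, that the implication $(\ref{locprop4})\Rightarrow(\ref{locprop1})$ does not use ${\cal B}(A)=\{0,1\}$; the paper's route instead makes the link between $(\ref{locprop2})$ and $(\ref{locprop4})$ a one--line citation. Both closings are short and valid, and your flag about the trivial residuated lattice applies equally to the paper's argument, since Proposition \ref{caractloc} is tacitly stated for non--trivial $A$.
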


\begin{proof} (\ref{locprop1})$\Rightarrow $(\ref{locprop3}): By Proposition \ref{maitare} and Lemma \ref{implicdir}.

\noindent (\ref{locprop3})$\Rightarrow $(\ref{locprop2}): By Proposition \ref{propstar}, (\ref{propstar1}).

\noindent (\ref{locprop2})$\Rightarrow $(\ref{locprop4}): By Remark \ref{booltriv} and Proposition \ref{caractlrblp}.

\noindent (\ref{locprop4})$\Rightarrow $(\ref{locprop2}): By Lemma \ref{boolradnil}.

\noindent (\ref{locprop2})$\Leftrightarrow $(\ref{locprop0}): By Proposition \ref{qlocblp}.

\noindent (\ref{locprop0})$\Rightarrow $(\ref{locprop1}): Assume that $A$ is quasi--local and ${\cal B}(A)=\{0,1\}$. Let $x,y\in A$, such that $x\odot y\in N(A)$, that is $[x)\vee [y)=[x\odot y)=A$. Then, according to Proposition \ref{qlocblp}, it follows that there exist $e,f\in {\cal B}(A)=\{0,1\}$ such that $e\vee f=1$ (thus $e=1$ or $f=1$) and $[x\odot e)=[x)\vee [e)=[x\odot f)=[y)\vee [f)=A$. If $e=1$, then $[x)=[x\odot 1)=A$, thus $x\in N(A)$; if $f=1$, then $[y)=[y\odot 1)=A$, thus $y\in N(A)$. By Proposition \ref{caractloc}, it follows that $A$ is local.\end{proof}

\begin{corollary} \begin{enumerate}
\item\label{splendid1} If ${\cal B}(A)=\{0,1\}$, then: $A$ satisfies $(\star )$ iff $A$ has BLP iff $A$ is local.
\item\label{splendid2} If $\odot =\wedge $ and ${\cal B}(A)=\{0,1\}$, then: $A$ satisfies $(\star )$ iff $A$ has BLP iff $A$ satisfies $(\star \star )$ iff $A$ is local.\end{enumerate}\label{splendid}\end{corollary}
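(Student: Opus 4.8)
The plan is to read off both parts directly from Proposition \ref{locprop} and Proposition \ref{propstar}, since all the substantive work is already packaged in those two results; the corollary is essentially a matter of matching hypotheses.

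For part (\ref{splendid1}) I would assume ${\cal B}(A)=\{0,1\}$ throughout. Under this standing hypothesis, each compound condition in Proposition \ref{locprop} that pairs a property with the clause ``${\cal B}(A)=\{0,1\}$'' collapses to the property alone, because that clause now holds automatically. Concretely, condition (\ref{locprop3}) of Proposition \ref{locprop} reads simply ``$A$ satisfies $(\star)$'' and condition (\ref{locprop2}) reads simply ``$A$ has BLP''. The equivalences (\ref{locprop1})$\Leftrightarrow$(\ref{locprop2})$\Leftrightarrow$(\ref{locprop3}) of that proposition then say, in our setting, that $A$ is local iff $A$ has BLP iff $A$ satisfies $(\star)$, which is exactly the claim of part (\ref{splendid1}).

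For part (\ref{splendid2}) I would add the hypothesis $\odot=\wedge$. This places us precisely in the situation of Proposition \ref{propstar}, (\ref{propstar3}), which yields the three-way equivalence $(\star)\Leftrightarrow BLP\Leftrightarrow(\star\star)$ for any residuated lattice with $\odot=\wedge$, making no reference to the Boolean center. Combining this with the chain already obtained in part (\ref{splendid1}) (which uses only ${\cal B}(A)=\{0,1\}$) appends the condition ``$A$ satisfies $(\star\star)$'' to the list, so that all four conditions coincide with $A$ being local.

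I do not anticipate any genuine obstacle here: the statement is bookkeeping over two earlier results. The only point requiring care is tracking which hypotheses feed which equivalence, namely that part (\ref{splendid1}) draws on the three local-characterising equivalences of Proposition \ref{locprop} that each carry the clause ${\cal B}(A)=\{0,1\}$, while the additional equivalence with $(\star\star)$ in part (\ref{splendid2}) is supplied solely by the idempotency hypothesis via Proposition \ref{propstar}, (\ref{propstar3}), and not by the Boolean center being trivial.
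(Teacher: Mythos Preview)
Your proposal is correct and follows essentially the same approach as the paper: part (\ref{splendid1}) is read off from the equivalences (\ref{locprop1})$\Leftrightarrow$(\ref{locprop2})$\Leftrightarrow$(\ref{locprop3}) of Proposition \ref{locprop} under the standing hypothesis ${\cal B}(A)=\{0,1\}$, and part (\ref{splendid2}) then appends $(\star\star)$ via Proposition \ref{propstar}, (\ref{propstar3}). The paper's proof is the same two-line citation, and your more detailed unpacking of how the hypotheses collapse the compound conditions is accurate.
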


\begin{proof} (\ref{splendid1}) By Proposition \ref{locprop}.

\noindent (\ref{splendid2}) By (\ref{splendid1}) and Proposition \ref{propstar}, (\ref{propstar3}).\end{proof}

\begin{remark} Corollary \ref{splendid} provides us with a quite productive method to obtain residuated lattices which satisfy $(\star )$/BLP/$(\star \star )$ and residuated lattices which do not satisfy $(\star )$/BLP/$(\star \star )$. Each of the following types of residuated lattice has the Boolean center equal to $\{0,1\}$, which leads to the results below:

\begin{center}
\begin{tabular}{cc}
\begin{picture}(100,110)(0,0)
\put(50,15){\circle*{3}}
\put(50,55){\circle*{3}}
\put(50,95){\circle*{3}}
\put(48,6){$0$}
\put(48,98){$1$}
\put(65,52){$a$}
\put(46,72){$C$}
\put(46,32){$L$}
\put(40,55){\line(1,0){20}}
\put(40,95){\line(1,0){20}}
\put(40,55){\line(0,1){40}}
\put(60,55){\line(0,1){40}}
\put(50,35){\circle{40}}
\end{picture}
&
\begin{picture}(100,110)(0,0)
\put(50,15){\circle*{3}}
\put(50,55){\circle*{3}}
\put(50,95){\circle*{3}}
\put(48,6){$0$}
\put(48,98){$1$}
\put(65,52){$a$}
\put(46,72){$L$}
\put(46,32){$C$}
\put(40,15){\line(1,0){20}}
\put(40,55){\line(1,0){20}}
\put(40,15){\line(0,1){40}}
\put(60,15){\line(0,1){40}}
\put(50,75){\circle{40}}
\end{picture}
\end{tabular}
\end{center}

\begin{enumerate}
\item If a residuated lattice $A$ is such that its underlying bounded lattice is formed of a non--trivial bounded lattice $L$, with first element $0$ and last element $a\in A$, and a bounded chain $C$, with first element $a$ and last element $1$, as suggested in the picture above in the left, then it is straightforward that, by considering the restrictions of the operations $\vee $, $\wedge $ and $\odot $ of $A$ to $L$, and the binary operation $\leadsto $ on $L$ defined by: for all $x,y\in L$, $x\leadsto y=\begin{cases}a, & \mbox{if }x\leq y,\\ x\rightarrow y, & \mbox{otherwise},\end{cases}$ then $(L,\vee ,\wedge ,\odot ,\leadsto ,0,a)$ becomes a residuated lattice, and ${\rm Max}(A)=\{C\cup M\ |\ M\in {\rm Max}(L)\}$, thus $A$ is local iff $L$ is local, therefore: $A$ satisfies $(\star )$ iff $A$ has BLP iff $A$ is local iff $L$ is local. If, moreover, $\odot =\wedge $ in $A$ (thus also in $L$), then : $A$ satisfies $(\star )$ iff $A$ has BLP iff $A$ satisfies $(\star \star )$ iff $A$ is local iff $L$ is local (see Example \ref{exfarablp}).
\item If a residuated lattice $A$ is such that its underlying bounded lattice is formed of a non--trivial bounded chain $C$, with first element $0$ and last element $a\in A$, and a bounded lattice $L$, with first element $a$ and last element $1$, as suggested in the picture above in the right, then, similarly to the above, it is straightforward that, by considering the restrictions of the operations $\vee $, $\wedge $ and $\odot $ of $A$ to $C$, and the binary operation $\leadsto $ on $C$ defined by: for all $x,y\in C$, $x\leadsto y=\begin{cases}a, & \mbox{if }x\leq y,\\ x\rightarrow y, & \mbox{otherwise},\end{cases}$ then $(C,\vee ,\wedge ,\odot ,\leadsto ,0,a)$ becomes a residuated lattice, which is local by Lemma \ref{chainloc}, therefore $A$ is local, with the unique maximal filter equal to $L\cup M$, where $M$ is the unique maximal filter of $C$, hence $A$ satisfies $(\star )$, thus $A$ also satisfies BLP and $(\star \star )$.\end{enumerate}\end{remark}

\end{document}